\DeclareMathAlphabet{\mathpzc}{OT1}{pzc}{m}{it}
\newtheorem{theorem}{Theorem}[section]
\newtheorem{proposition}[theorem]{Proposition}
\newtheorem{lemma}[theorem]{Lemma}
 \newtheorem*{GKM}{The GKM condition}
\theoremstyle{definition}
\newtheorem{definition}[theorem]{Definition}
\theoremstyle{remark}
\newtheorem{remark}[theorem]{Remark}
\newtheorem{remarks}[theorem]{Remarks}
\newcommand{\CA}{{\mathcal A}}
\newcommand{\CG}{{\mathcal G}}
\newcommand{\CI}{{\mathcal I}}
\newcommand{\CJ}{{\mathcal J}}
\newcommand{\CK}{{\mathcal K}}
\newcommand{\CL}{{\mathcal L}}
\newcommand{\CO}{{\mathcal O}}
\newcommand{\CS}{{\mathcal S}}
\newcommand{\CT}{{\mathcal T}}
\newcommand{\CV}{{\mathcal V}}
\newcommand{\CW}{{\mathcal W}}
\newcommand{\CX}{{\mathcal X}}
\newcommand{\CZ}{{\mathcal Z}}
\newcommand{\SM}{{\mathscr M}}
\newcommand{\SN}{{\mathscr N}}
\newcommand{\hCW}{{\widehat\CW}}
\newcommand{\hCS}{{\widehat\CS}}
\newcommand{\tCJ}{{\widetilde{\CJ}}}
\newcommand{\DC}{{\mathbb C}}
\newcommand{\DR}{{\mathbb R}}
\newcommand{\DZ}{{\mathbb Z}}
\newcommand{\DT}{{\mathbb T}}
\newcommand{\bA}{{\mathbf A}}
\newcommand{\bK}{{\mathbf K}}
\newcommand{\bP}{{\mathbf P}}
\newcommand{\bS}{{\mathbf S}}
\newcommand{\Hom}{{\operatorname{Hom}}}
\newcommand{\supp}{{\operatorname{supp}}}
\newcommand{\ol}{\overline}
\newcommand{\id}{{\operatorname{id}}}
\newcommand{\lgl}{\langle}
\newcommand{\rgl}{\rangle}
\newcommand{\comment}[1]{}
\begin{document}

\pagenumbering{arabic}
\title[]{Sheaves on the  alcoves and modular representations I} 

\author[]{Peter Fiebig, Martina Lanini}
\begin{abstract} We consider the set of affine alcoves associated with a root system $R$ as a topological space and define  a certain category $\bS$ of sheaves of $\CZ_k$-modules on this space. Here $\CZ_k$ is the structure algebra of the root system over a field $k$.  To any wall reflection $s$ we associate a wall crossing functor on $\bS$. In the companion article \cite{FieLanModRep} we prove that $\bS$ encodes the simple rational characters of the connected, simply connected algebraic group with root system $R$ over $k$, in the case that $k$ is algebraically closed with characteristic above the Coxeter number. \end{abstract}

\address{Department Mathematik, FAU Erlangen--N\"urnberg, Cauerstra\ss e 11, 91058 Erlangen}
\email{fiebig@math.fau.de}
\address{Universit\`a degli Studi di Roma ``Tor Vergata", Dipartimento di Matematica, Via della Ricerca Scientifica 1, I-00133 Rome, Italy }
\email{lanini@mat.uniroma2.it}
\maketitle

\section{Introduction}
This article is the first in  a series of articles that are meant to introduce and study a new category  that encodes the simple rational characters of a reductive algebraic group in characteristics above the Coxeter number. This category is ``combinatorial'' in the sense that it is  defined in terms of the underlying root system without reference to the group itself. 

The following should give the reader a first idea on the nature of this category. Let $R$ be a root system and denote by $\CA$ the associated set of (affine) alcoves. This set carries a partial order (sometimes called the {\em generic Bruhat order} or the {\em Bruhat order at $-\infty$}) and we obtain a topology on $\CA$ with the order ideals as open sets. Note that the group $\DZ R$ acts on $\CA$ by translating alcoves. Now fix a field $k$ that satisfies the GKM-property with respect to $R$ (i.e., the characteristic is not $2$ and not $3$ if $R$ contains a component of type $G_2$). Denote by $S$ the symmetric algebra over the $k$-vector space associated with the coweight lattice of $R$. The action of the finite Weyl group on the set of $\DZ R$-orbits in $\CA$ gives rise to a commutative $S$-algebra $\CZ=\CZ_k$. 

The category  that we are proposing is a full subcategory  of the category of sheaves\footnote{(in the most ordinary sense)} of $\CZ$-modules on the topological space $\CA$. We denote it by $\bS$. Apart from some minor technical assumptions there are two main properties that we stipulate on objects in $\bS$. The first is the following. Let $x$ be a $\DZ R$-orbit in $\CA$ and let $\SM$ be a presheaf of $\CZ$-modules on $\CA$.  There are two constructions, associated with $x$, that we can perform on $\SM$. One is algebraic, the other topological. By the definition of the structure algebra, the $\DZ R$-orbit $x$ in $\CA$ gives rise to a $\CZ$-module homomorphism $\CZ\to\CZ^x$, where $\CZ^x$ is free as an $S$-module of (graded) rank $1$. We obtain the presheaf $\SM^x$ of $\CZ$-modules on $\CA$ by composing $\SM$ with the functor $\CZ^x\otimes_{\CZ}\cdot$. On the other hand, as $x$ is a subset of $\CA$, we can consider the inclusion $i_x\colon x\to\CA$ and we obtain a natural morphism $\SM^x\to i_{x\ast}i_x^\ast\SM^x$. We say that $\SM$ {\em satisfies the support condition} if this   is an isomorphism for all $\DZ R$-orbits $x$ in $\CA$. 

The second condition that we want the  objects in $\bS$ to satisfy is that they should ``behave well under base change''. Again, our definition of a base change functor is based on an algebraic and a topological construction. Note that the structure algebra is an algebra over the symmetric algebra $S$ over the coroot lattice. If $T$ is a flat $S$-algebra,  then we can extend scalars and obtain  $\CZ_T:=\CZ\otimes_S T$. On the other hand, note that the generic Bruhat order is generated by relations between an alcove and its mirror image with respect to a reflection at an affine root hyperplane. We can coarsen this relation by considering only reflections at hyperplanes corresponding to coroots that are not invertible in $T$, together with translations by arbitrary positive
 roots. This yields a finer topology on $\CA$. We then define a base change functor $(\cdot)\boxtimes_S T$ that maps a presheaf of $\CZ$-modules to a presheaf of $\CZ_T$-modules on the finer topology. It is constructed in such a way that its images satisfy the support condition mentioned above. 
 Now a sheaf ``behaves well under base change'' if it is still a sheaf, and not only a presheaf, after all flat base changes $S\to T$.

Once the category $\bS$ is defined, we introduce a wall crossing functor $\vartheta_s$ on $\bS$ with respect to each wall reflection $s$. It is  constructed on the level of presheaves of $\CZ$-modules  by a simple universal property, but in general it does not preserve  the category of sheaves. However, we show that it preserves the category $\bS$.

In the companion article \cite{FieLanModRep} we consider the exact structure on $\bS$ that is inherited via its inclusion in the abelian  category of sheaves of $\CZ$-modules on $\CA$. We then show that $\bS$ contains enough projectives, and we prove that the ranks of the stalks of the indecomposable projective objects in $\bS$ encode the characters of the simple rational representations of the reductive algebraic groups with root system $R$ if the characteristic of $k$ is larger than the Coxeter number. For this we construct a functor $\Psi$ into the Andersen--Jantzen--Soergel category $\bK$ defined in \cite{AJS} and we show that the indecomposable projective objects are mapped to the indecomposable ``special'' objects in $\bK$\footnote{We should point out right away that our functor $\Psi$ is not fully faithful, not even when restricted to the projective objects in $\bS$. We will show in a forthcoming article that $\bS$ is a filtered category and that  the image of $\Psi\colon\bS\to\bK$ can be thought of as the ``associated graded category''.}.  From this one obtains the irreducible characters of $G$ via known results. 

The advantage of $\bS$ over $\bK$ is  that the definition of the category $\bK$ is very ad-hoc and technical as it is a collection of subgeneric, i.e. local data, without a corresponding global object. Moreover,  the ``special'' objects are defined by applying wall crossing functors to certain base objects and they  do not have an intrinsic categorical characterization such as projectivity.   This makes working with $\bK$ rather difficult, and simple  looking statements often require sophisticated and technical arguments\footnote{An example is the self-duality of special objects studied in  \cite{S}.}. We believe that our category is much easier to work with, as its objects are nothing but sheaves on a topological space, and the objects that are important for representation theoretic applications are defined intrinsically, i.e. without reference to wall crossing functors. 

We have  further hopes with respect to $\bS$. Note that Lusztig's formula for the irreducible characters of $G$ is only valid if the characteristic of $k$ is large enough. Ever since Williamson provided a huge list of examples  for characteristics in which the formula fails (called the {\em torsion primes}), one tries to understand the phenomenon of torsion primes in modular representation theory. As $\bS$ encodes the characters for all characteristics above the Coxeter number, it might be a helpful tool for this. Moreover, we believe that it is possible to simplify the work of Andersen, Jantzen and Soergel by constructing a functor from the category of (deformed) $G_1T$-modules into the associated graded of $\bS$ directly, without refering to $\bK$. This functor then hopefully  makes sense also for restricted critical level representations of the affine Kac-Moody algebra associated with $R$ (so here we should take $k=\DC$), which might lead to a calculation of the irreducible critical characters, i.e. a proof of the Feigin--Frenkel conjecture.


\subsection*{Acknowledgements}
This material is based upon work supported by the National Science Foundation under Grant No. 0932078 000 while the first author was in residence at the Mathematical Sciences Research Institute in Berkeley, California, during the Fall 2014 semester. The second author would like to thank the University of Edinburgh, that supported her research during part of this project, and to acknowledge the MIUR Excellence Department Project awarded to the Department of Mathematics, University of Rome Tor Vergata, CUP E83C18000100006.
 Both authors were partially supported by the DFG grant SP1388.

\section{(Pre-)Sheaves on partially ordered sets}\label{sec-sheavesparord}
This section provides some  basic results on the topology of partially ordered sets and their theory of (pre-)sheaves.  Fix a partially ordered set $(\CX,\preceq)$. For an element $A$ of $\CX$ we will use the short hand notation $\{\preceq A\}=\{B\in\CX\mid B\preceq A\}$. The notations $\{\succeq A\}$, $\{\prec A\}$, etc. have an analogous meaning. 

\subsection{A topology on  $(\CX,\preceq)$}\label{subsec-TopPar}

The following clearly yields a topology on the set $\CX$.
\begin{definition} \label{def-topspac} 
A subset $\CJ$ of $\CX$ is called {\em open}, if $A\in \CJ$ and $B\preceq A$ imply  $B\in \CJ$, i.e. if 
$
\CJ=\bigcup_{A\in\CJ}\{\preceq A\}.
$

\end{definition}
The following statements are easy to check. 
\begin{remark}\label{rem-top}
\begin{enumerate}
\item A subset $\CI$ of $\CX$ is closed if and only if $\CI=\bigcup_{A\in\CI}\{\succeq A\}$.
\item Arbitrary unions and intersections of open sets are open. The same holds for closed sets. 
\item A subset $\CK$ of $\CX$ is locally closed (i.e. an intersection of an open and a closed subset) if and only if $A,B\in\CK$ and $A\preceq C\preceq B$ imply $C\in\CK$. 
\end{enumerate}
\end{remark}
For any subset $ \CT $ of $\CX$ we define 
$$ \CT_{\preceq}:=\bigcup_{A\in \CT }\{\preceq A\}.
$$
This  is the smallest open subset of $\CX$ that contains $ \CT $.


\subsection{Presheaves on partially ordered sets}\label{subsec-PosetSheaves}
Now suppose $\bA$ is an abelian category that has arbitrary products. For a presheaf $\SM$ on $\CX$ with values in $\bA$ and an open subset $\CJ$ of $\CA$ we denote by $\SM{(\CJ)}$ the object of sections of $\SM$ over $\CJ$. For an inclusion $\CJ^\prime\subset\CJ$ we denote by $r_{\CJ}^{\CJ^\prime}\colon \SM(\CJ)\to\SM{(\CJ^\prime)}$ the restriction morphism. Sometimes we write $m|_{\CJ^\prime}$ instead of $r_\CJ^{\CJ^\prime}(m)$. Recall that one calls a presheaf $\SM$ {\em flabby} if for any inclusion $\CJ^\prime\subset\CJ$ of open sets the restriction homomorphism $r_{\CJ}^{\CJ^\prime}$ is surjective.

For a subset $\CT$ of $\CX$ we denote by $i_\CT\colon \CT\to\CX$ the inclusion. 

\begin{definition}\label{def-supppre} Let $\SM$ be a presheaf on $\CX$. \begin{enumerate}\item $\SM$  is said to be {\em supported on $\CT\subset\CX$} if the natural morphism $\SM\to i_{\CT\ast}i_\CT^\ast\SM$ is an isomorphism. \item $\SM$  is said to be  {\em finitary} if $\SM(\emptyset)=0$ and if there exists a finite subset $\CT$ of $\CX$ such that $\SM$ is supported on $\CT$. \end{enumerate}
\end{definition}

(Here, $i_{\CT\ast}$ and $i_\CT^\ast$ are the push-forward and pull-back functors for {\em  presheaves}, i.e., $i_{\CT\ast}\SN(\CJ)=\SN(i_\CT^{-1}(\CJ))$ and $i_\CT^\ast\SM(\CJ)$ is the limit of the system of objects $\SM(\CJ^\prime)$ together with the restriction maps, where $\CJ^\prime$ runs over the set of open neighbourhoods of  $i_{\CT}(\CJ)$). More explicitely, $\SM$ is supported on $\CT$ iff  for any open subset $\CJ$ of $\CX$ the restriction homomorphism $\SM(\CJ)\to\SM((\CJ\cap\CT)_{\preceq})$ is an isomorphism.

The partially ordered sets that we will be interested in in this article will be the sets of alcoves associated with a finite root system endowed with various partial orders depending on the choice of a base ring. 
\section{Alcove Geometry}\label{sec-alcoves}
In the following we review  the basic features of the alcove geometry associated with a root system. As a reference one might consult \cite{Hum}\footnote{In \cite{Hum}, what we refer to as the \emph{affine Weyl group associated with a root system}, would be rather associated with the dual root system. We hope the reader will not be confused by this choice of terminology.}. We  endow the set of alcoves with a topology. This depends on the choice of a {\em base ring} $T$. Then we  study the  decomposition into connected components. The main result is that each connected component is an orbit under a subgroup of the affine Weyl group (this subgroup depends on $T$). We finally introduce {\em admissible} families of open subsets. We prove in a later section that  the sheaves that we are interested in are determined by their restriction to an arbitrary admissible family. 

\subsection{Roots, reflections and alcoves}\label{subsec-Alc}
Fix  a finite irreducible root system $R$ in a real finite dimensional vector space  $V$.  For any $\alpha\in R$  denote by $\alpha^\vee\in V^\ast=\Hom_\DR(V,\DR)$ the corresponding coroot. Let $\langle\cdot,\cdot\rangle\colon V\times V^\ast\to \DR$ be the natural pairing and fix a system $R^+\subset R$ of positive roots. Let
\begin{align*}
X&:=\{\lambda\in V\mid \langle \lambda,\alpha^\vee\rangle\in\DZ\text{ for all $\alpha\in R$}\},\\
X^\vee&:=\{v\in V^\ast\mid \langle\alpha,v\rangle\in\DZ\text{ for all $\alpha\in R$}\}
\end{align*}
be the weight and the coweight lattice, resp. 
For  $\alpha\in R^+$ and $n\in\DZ$  define 
$$
 H_{\alpha,n}:=\{\mu\in V\mid \langle \mu,\alpha^\vee\rangle = n\},
 $$
 the {\em affine reflection hyperplane} associated with $\alpha$ and $n$,  and
\begin{align*}
H_{\alpha,n}^+&:=\{\mu\in V\mid \langle \mu, \alpha^\vee\rangle>n\},\\
H_{\alpha,n}^-&:=\{\mu\in V\mid \langle \mu,\alpha^\vee\rangle<n\},
\end{align*}
the corresponding positive and the negative half-space, resp.
 \begin{definition} 
 The connected components of $V\setminus\bigcup_{\alpha\in R^{+},n\in\DZ}H_{\alpha,n}$ are called {\em alcoves}. We denote by $\CA$ the set of alcoves. 
\end{definition}

Let $\alpha\in R^+$ and $n\in\DZ$.  The {\em affine reflection} with fixed point hyperplane $H_{\alpha,n}$ is 
$$
s_{\alpha,n}\colon V\to V,\quad \lambda\mapsto \lambda-(\langle \lambda,\alpha^\vee\rangle-n)\alpha.
$$
It maps $X$ into $X$ and preserves the set $\{H_{\beta,m}\}$ of affine hyperplanes, hence induces a bijection on the set $\CA$ that we denote by $s_{\alpha,n}$ as well. 

For $\gamma\in X$ we  denote by $t_\gamma\colon V\to V$ the affine translation $\mu\mapsto \mu+\gamma$. Again this preserves the set of hyperplanes and induces  a bijection  $t_\gamma\colon\CA\to\CA$.  
 Easy calculations yield:
\begin{lemma} \label{lemma-easypeasy}
\begin{enumerate}
\item For $\alpha\in R^+$ and $m,n\in\DZ$ we have $s_{\alpha,n}\circ s_{\alpha,m}=t_{(n-m)\alpha}$.
\item For $\alpha\in R^+$,  $n\in\DZ$ and $\lambda\in X$ we have $s_{\alpha,n}\circ t_\lambda= t_{s_{\alpha,0}(\lambda)}\circ s_{\alpha,n}$.
\end{enumerate}
\end{lemma}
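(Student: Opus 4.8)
The plan is to verify both identities by direct computation, treating $s_{\alpha,n}$ and $t_\gamma$ as affine transformations of $V$ and checking equality on an arbitrary $\mu\in V$; since both sides of each identity are bijections of $V$ that restrict to bijections of $\CA$, it suffices to prove the equality as maps $V\to V$.

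\begin{proof}
Both assertions are verified by evaluating the two sides on an arbitrary vector and using the defining formula $s_{\alpha,n}(\mu)=\mu-(\langle\mu,\alpha^\vee\rangle-n)\alpha$ together with $t_\gamma(\mu)=\mu+\gamma$. Since $s_{\alpha,n}$ and $t_\gamma$ act on $V$ and the induced maps on $\CA$ are determined by this action, it is enough to check the stated equalities as maps $V\to V$.

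(1) Fix $\mu\in V$. Since $\langle\alpha,\alpha^\vee\rangle=2$, we compute
\[
\langle s_{\alpha,m}(\mu),\alpha^\vee\rangle=\langle\mu,\alpha^\vee\rangle-(\langle\mu,\alpha^\vee\rangle-m)\cdot 2 = 2m-\langle\mu,\alpha^\vee\rangle.
\]
Hence
\[
s_{\alpha,n}(s_{\alpha,m}(\mu))=s_{\alpha,m}(\mu)-\big(\langle s_{\alpha,m}(\mu),\alpha^\vee\rangle-n\big)\alpha
=\mu-(\langle\mu,\alpha^\vee\rangle-m)\alpha-\big(2m-\langle\mu,\alpha^\vee\rangle-n\big)\alpha.
\]
Collecting the coefficients of $\alpha$, the terms $\langle\mu,\alpha^\vee\rangle$ cancel and we are left with $\mu+(n-m)\alpha=t_{(n-m)\alpha}(\mu)$, as claimed. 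Note $(n-m)\alpha\in X$ since $\langle(n-m)\alpha,\beta^\vee\rangle\in\DZ$ for all $\beta\in R$, so the right-hand side is a legitimate translation in our setup.

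(2) Fix $\lambda\in X$ and $\mu\in V$. On the one hand
\[
s_{\alpha,n}(t_\lambda(\mu))=s_{\alpha,n}(\mu+\lambda)=\mu+\lambda-\big(\langle\mu+\lambda,\alpha^\vee\rangle-n\big)\alpha.
\]
On the other hand, since $s_{\alpha,0}(\lambda)=\lambda-\langle\lambda,\alpha^\vee\rangle\alpha$,
\[
t_{s_{\alpha,0}(\lambda)}(s_{\alpha,n}(\mu))=s_{\alpha,n}(\mu)+\lambda-\langle\lambda,\alpha^\vee\rangle\alpha
=\mu-(\langle\mu,\alpha^\vee\rangle-n)\alpha+\lambda-\langle\lambda,\alpha^\vee\rangle\alpha.
\]
Comparing, both expressions equal $\mu+\lambda-\big(\langle\mu,\alpha^\vee\rangle+\langle\lambda,\alpha^\vee\rangle-n\big)\alpha$ by linearity of $\langle\cdot,\alpha^\vee\rangle$, so the two sides agree. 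Finally $s_{\alpha,0}(\lambda)\in X$ because $s_{\alpha,0}$ preserves $X$, so $t_{s_{\alpha,0}(\lambda)}$ is defined.
\end{proof}

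There is no real obstacle here: the statement is a bookkeeping exercise, and the only point requiring a moment's care is tracking the cancellation of the $\langle\mu,\alpha^\vee\rangle$-terms in part (1) after substituting the reflected value of the pairing, together with the trivial observation that the translations appearing on the right-hand sides lie in $X$ so that the identities make sense in the stated form.
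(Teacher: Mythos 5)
Your computation is correct and is exactly the kind of direct verification the paper has in mind; the paper gives no written proof, simply remarking ``Easy calculations yield:'' before the statement. The cancellation in part (1) and the linearity argument in part (2) both check out, and your side remarks that $(n-m)\alpha\in X$ (since $\alpha\in X$) and $s_{\alpha,0}(\lambda)\in X$ are correct and appropriately ensure the right-hand translations are well defined in the paper's setup.
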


Denote by $\hCW$ the affine Weyl group, i.e. the group of affine transformations on $V$ generated by the set $\{s_{\alpha,n}\mid \alpha\in R^+, n\in\DZ\}$.  Lemma \ref{lemma-easypeasy} implies that $t_{\gamma}$ is contained in $\hCW$ for $\gamma\in\DZ R$. The affine Weyl group acts on the set $\CA$, and $\CA$ is a principal homogeneous set for this $\hCW$-action (cf. \cite[Section 4.5]{Hum}).

\subsection{Base rings}\label{subsec-basering} The topology that we introduce on $\CA$ will depend on the choice of what we will call a base ring. First, let $k$ be a field. We denote by $X^\vee_k=X^\vee\otimes_\DZ k$ the $k$-vector space associated with the lattice $X^\vee$. For  $v\in X^\vee$ we often denote by $v$ its canonical image  $v\otimes 1$ in $X^\vee_k$.
We always  assume that the following holds:
\begin{GKM}  The characteristic of $k$ is  $\ne 2$ and for $\alpha,\beta\in R^+$, $\alpha\ne\beta$ we have $\alpha^\vee\not\in k\beta^\vee$ (as elements in $X^\vee_k$).
\end{GKM}

This condition  excludes characteristic $2$ in all cases, and characteristic $3$ in case $G_2$. Note that for the representation theoretic applications we need the additional assumption that the characteristic of $k$ is larger than the Coxeter number of $R$. From now on we assume that $k$ satisfies the above. The main reason for this condition is that it makes sure that in {\em subgeneric situations} (for the definition see below) we deal with $A_1$-type situations, i.e. situations that can be understood very explicitely.

 Let $S=S(X^\vee_k)$ be the symmetric algebra of the $k$-vector space $X^\vee_k$. We consider $S$ as a $\DZ$-graded algebra with degree 2 component $S_2=X^\vee_k\subset S$. Let $T$ be a unital, commutative $S$-algebra that is flat as an $S$-module. Sometimes we assume in addition that $T$ is a $\DZ$-graded $S$-algebra. In this case, a $T$-module is assumed to be graded, and a homomorphism between graded modules is assumed to respect the grading, i.e. it is of degree $0$. For a $\DZ$-graded object $M=\bigoplus_{n\in\DZ} M_n$ and $l\in\DZ$ we write $M[l]$ for the object that we obtain from $M$ by shifting the grading in such a way that $M[l]_n=M_{l+n}$ for all $n\in\DZ$.

Let $T$ be a (not necessarily graded) commutative, unital, flat $S$-algebra. Again we often write $\alpha^\vee$ for the image of $\alpha^\vee$ in $T$.
Note that the flatness implies that the structure homomorphism $S\to T$, $f\mapsto f1_T$, is injective. In particular, for two positive roots $\alpha\ne\beta$, the images in $T$ of $\alpha^\vee$ and $\beta^\vee$ are $k$-linearly independent.

\begin{definition}
\begin{enumerate}
\item Denote by $I_T\subset R^+$ the set of all $\alpha$ such that the left multiplication with $\alpha^\vee$ on $T$ is {\em not} a bijection (i.e. $\alpha^\vee$ is not invertible in $T$).
\item Denote by $\hCW_T$ the subgroup of $\hCW$ that is generated by $t_\gamma$ with $\gamma\in\DZ R$ and by $s_{\alpha,m}$ for all  $\alpha\in I_T$ and $m\in\DZ$. 
\item Denote by $R_T^+\subset R^+$ the set of all $\alpha$ with the property that $\hCW_T$ contains $s_{\alpha,m}$ for some $m\in\DZ$.
\end{enumerate}
\end{definition}
Note that  $\hCW_T$ contains $\DZ R$, hence Lemma \ref{lemma-easypeasy} implies that $s_{\alpha,m}\in\hCW_T$ for some $m\in\DZ$ if and only if this condition holds for all $m\in\DZ$. 
By definition $I_T\subset R_T^+$. But in general this is not a bijection. For example, if $T=S[\alpha^{\vee-1}\mid\text{ $\alpha\in R^+$ is not simple}]$, then $I_T\subset R^+$ is the set of simple roots. It follows that  $\hCW_T=\hCW$, so $R_T^+=R^+$
.
\begin{definition} Call  $T$ {\em saturated} if $I_T=R_T^+$.
\end{definition}

\begin{remark} \begin{enumerate}
\item For $T=S$ we have $I_S=R^+$, $\hCW_T=\hCW$ and $R_S^+=R^+$. Hence $S$ is saturated.
\item Suppose that $I_T=\emptyset$. Then $\hCW_T=\DZ R$ and $R_T^+=\emptyset$. In particular, $T$ is saturated in this case. 
\item Suppose that $I_T=\{\alpha\}$ for some $\alpha\in R^+$. Then $\hCW_T$ is generated by $s_{\alpha,m}$ with $m\in\DZ$ and $t_\gamma$ with $\gamma\in\DZ R$. Then $\hCW_T=\{\id,s_{\alpha,0}\}\ltimes\DZ R$ and $R_T^+=\{\alpha\}$. Again, $T$ is saturated.
\end{enumerate}
\end{remark}

\begin{definition} 
 A {\em base ring} is an $S$-algebra $T$  that satisfies the following:
 \begin{enumerate}
 \item $T$ is unital and commutative.
 \item $T$ is flat as a (graded) $S$-module.
 \item $T$ is saturated.
 \item Any projective $T$-module is free.
 \end{enumerate}
 A base ring $T$ is called {\em generic}, if $R_T^+=\emptyset$, and {\em subgeneric}, if $R_T^+=\{\alpha\}$ for some $\alpha\in R^+$. 
\end{definition}

\subsection{Partial orders and topologies  on $\CA$}
Fix a base ring $T$.
We now endow the set $\CA$ with a partial order that depends on $T$, or rather on the set $I_T=R_T^+$.

\begin{definition}
Denote by  $\preceq_T$  the partial order on $\CA$ that is generated by $A\preceq_T B$ if either $B=t_\gamma(A)$ with $\gamma\in\DZ_{\ge0} R^+$, or $B=s_{\alpha,n}(A)$ with $\alpha\in R_T^+$ and $n\in\DZ$ such that $A\subset H_{\alpha,n}^-$. 
\end{definition}

\begin{remark}\label{rem-ordexp}  If $A\preceq_T B$, then there exists a finite sequence $A=A_0$, $A_1$,\dots,$A_n=B$ such that for all $i=1,\dots,n$ we either have $A_{i+1}=t_\gamma(A_i)$ for some $\gamma\in\DZ_{\ge 0}R^+$, or $A_{i+1}=s_{\alpha,n}(A_i)$ for some $\alpha\in R_T^+$, $n\in\DZ$ with $A_i\subset H_{\alpha,n}^-$. 
\end{remark}

We denote by  $\CA_T$ the topological space associated with $(\CA,\preceq_T)$.  Sometimes we call a subset $\CJ$ of $\CA$ {\em $T$-open} if it is open in $\CA_T$.

\begin{remark}\label{rem-ordorb}
 Let $T\to T^\prime$ be a homomorphism  of base rings. If $\alpha^\vee$ is invertible in $T$, then it is also invertible in $T^\prime$. Hence $I^+_{T^\prime}\subseteq  I^+_T$ and $\hCW_{T^\prime}\subseteq\hCW_T$, so $R_{T^\prime}^+\subseteq R_T^+$. Then  $A\preceq_{T^\prime} B$ implies $A\preceq_T B$, hence a $T$-open subset is also $T^\prime$-open. That means that  the identity  $i_{T^\prime}^T\colon \CA_{T^\prime}\to\CA_T$ is continuous.
\end{remark}

\begin{lemma} \label{lemma-gentop} Let $A\in\CA$ and $\gamma\in\DZ R$. Then $A\preceq_T A+\gamma$ if and only if $\gamma\in\DZ_{\ge 0}R^+$. In particular, the topology induced on a $\DZ R$-orbit in $\CA_T$ via the inclusion is independent of $T$. \end{lemma}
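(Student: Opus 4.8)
The plan is to prove the equivalence in both directions and then deduce the final sentence as an easy corollary. For the ``if'' direction, suppose $\gamma\in\DZ_{\ge0}R^+$, i.e.\ $\gamma=\sum_{\alpha\in R^+}c_\alpha\alpha$ with $c_\alpha\in\DZ_{\ge0}$. By the very definition of $\preceq_T$, a single translation $B=t_\delta(A)$ with $\delta\in\DZ_{\ge0}R^+$ is already a generating relation, so in fact $A\preceq_T A+\gamma$ directly (one can also decompose $\gamma$ into a sum of positive roots and apply one generating relation per summand, following Remark \ref{rem-ordexp}). This direction uses nothing about $T$.

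The ``only if'' direction is the substantive one. Suppose $A\preceq_T A+\gamma$ with $\gamma\in\DZ R$. By Remark \ref{rem-ordexp} there is a chain $A=A_0,A_1,\dots,A_n=A+\gamma$ where each step is either a translation $A_{i+1}=t_{\delta_i}(A_i)$ with $\delta_i\in\DZ_{\ge0}R^+$, or a reflection $A_{i+1}=s_{\beta_i,m_i}(A_i)$ with $\beta_i\in R_T^+$ and $A_i\subset H_{\beta_i,m_i}^-$. The idea is to extract a ``linear functional'' that is monotone along the chain. Concretely, I would pick a vector $v\in V^\ast$ lying in the interior of the dominant cone, so that $\langle\alpha,v\rangle>0$ for every $\alpha\in R^+$, and track the quantity $\ell(B):=\langle \mu_B,v\rangle$ where $\mu_B$ is, say, the barycenter of the alcove $B$ (any $\hCW$-equivariant choice of point works, since $v$ need not be $\hCW$-invariant — what matters is the behavior under the specific moves). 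Under a translation step, $\ell$ increases by $\langle\delta_i,v\rangle\ge0$. Under a reflection step $s_{\beta_i,m_i}$ with $A_i\subset H_{\beta_i,m_i}^-$, one computes $s_{\beta_i,m_i}(\mu)-\mu=-(\langle\mu,\beta_i^\vee\rangle-m_i)\beta_i$; since $A_i$ lies in the negative half-space, $\langle\mu_{A_i},\beta_i^\vee\rangle<m_i$, hence $s_{\beta_i,m_i}(\mu_{A_i})-\mu_{A_i}$ is a positive multiple of $\beta_i$, so $\ell$ strictly increases (as $\langle\beta_i,v\rangle>0$). Summing over the chain, $\langle\gamma,v\rangle=\ell(A+\gamma)-\ell(A)\ge0$; moreover the total displacement $\gamma=\sum_i(\mu_{A_{i+1}}-\mu_{A_i})$ is visibly a nonnegative combination of positive roots (each translation step contributes an element of $\DZ_{\ge0}R^+$, each reflection step contributes a positive multiple of a positive root). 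Since $\gamma\in\DZ R$ and it is expressible as a nonnegative real combination of positive roots, standard root-system facts (e.g.\ the positive roots span a simplicial-type cone relative to the simple roots, so a lattice element in the positive cone has nonnegative simple-root coordinates, hence lies in $\DZ_{\ge0}R^+$) give $\gamma\in\DZ_{\ge0}R^+$.

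I expect the main obstacle to be the last bookkeeping step: arguing cleanly that an element of the root lattice which is a nonnegative $\DR$-linear combination of positive roots actually lies in $\DZ_{\ge0}R^+$ (equivalently, has nonnegative integer coefficients in the simple roots). This is where one must be careful that the reflection steps only ever contribute multiples of positive roots in the ``forward'' direction — which is exactly what the condition $A_i\subset H_{\beta_i,m_i}^-$ guarantees — and one should double-check that $\beta_i\in R_T^+$ being possibly a non-simple positive root causes no trouble (it does not: any positive multiple of any positive root still lies in the closed positive cone). An alternative, perhaps cleaner, route for the ``only if'' direction avoids barycenters entirely: note that each generating move sends $A$ into an alcove of the form $A+\eta$ for $\eta$ in the closed cone $\DR_{\ge0}R^+$ when read through the $\DZ R$-torsor structure, and iterate; but the functional-$v$ argument has the advantage of simultaneously yielding the inequality $\langle\gamma,v\rangle\ge0$ which pins down membership in the cone.

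Finally, for the ``in particular'' clause: by Lemma \ref{lemma-easypeasy} the $\DZ R$-orbit of $A$ is $\{t_\gamma(A)\mid\gamma\in\DZ R\}$, in bijection with $\DZ R$ via $\gamma\mapsto t_\gamma(A)$. The first part shows that the induced partial order on this orbit is $t_\gamma(A)\preceq_T t_\delta(A)\iff \delta-\gamma\in\DZ_{\ge0}R^+$, a description with no reference to $T$; since the subspace topology on the orbit is determined by this induced order (open sets are the order ideals intersected with the orbit, by Definition \ref{def-topspac} and Remark \ref{rem-top}), it too is independent of $T$.
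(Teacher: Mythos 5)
Your proof is correct and follows essentially the same path as the paper's: track barycenter displacements along a chain from Remark~\ref{rem-ordexp}, observe that each translation step contributes an element of $\DZ_{\ge 0}R^+$ and each reflection step (with $A_i\subset H_{\beta_i,m_i}^-$) contributes a positive real multiple of $\beta_i\in R^+$, so the total displacement $\gamma$ lies in $\DR_{\ge 0}R^+$, and then conclude $\gamma\in\DZ R\cap\DR_{\ge 0}R^+=\DZ_{\ge 0}R^+$. That last intersection is not the obstacle you feared: writing $\Delta$ for the simple roots, one has $\DR_{\ge 0}R^+=\DR_{\ge 0}\Delta$ and $\DZ R=\DZ\Delta$, and since $\Delta$ is a basis the intersection is $\DZ_{\ge 0}\Delta=\DZ_{\ge 0}R^+$. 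One genuine slip worth flagging: you claim the auxiliary functional $v$ ``yields the inequality $\langle\gamma,v\rangle\ge 0$ which pins down membership in the cone.'' It does not — for a single $v$ in the interior of the dominant cone, the half-space $\{\gamma:\langle\gamma,v\rangle\ge 0\}$ strictly contains $\DR_{\ge 0}R^+$. What actually pins down cone membership is your other (and correct) observation that $\gamma$ is a sum of step-by-step displacements each lying in $\DR_{\ge 0}R^+$; the $v$-functional is an unnecessary detour, and the paper omits it entirely.
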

\begin{proof} For $A\in\CA$ denote by $\lambda_A\in V$ the barycenter of $A$. We claim that  $A\preceq_T B$  implies $\lambda_B-\lambda_A\in \DR_{\ge 0} R^+$. In the case $B=t_\gamma(A)$ with $\gamma\in \DZ_{\ge 0}R^+$ we have $\lambda_B-\lambda_A=\gamma\in\DR_{\ge 0}R^+$. If $A=s_{\alpha,n}(B)$ with $A\subset H_{\alpha,n}^-$ we have $\lambda_B=s_{\alpha,n}\lambda_A$ and $s_{\alpha,n}(\lambda_A)-\lambda_A=-(\lgl\lambda_A,\alpha^\vee\rgl-n)\alpha\in\DR_{\ge 0}\alpha$. The general case follows  from Remark \ref{rem-ordexp}. 
Now $\lambda_{A+\gamma}-\lambda_A=\gamma$. Hence $A\preceq_T A+\gamma$  implies that $\gamma\in\DZ R\cap\DR_{\ge 0}R^+=\DZ_{\ge 0}R^+$. The converse is true by definition. 
\end{proof}
In this article we denote $\DZ R$-orbits by lower case Latin letters like $x,y,$ etc.
 
\subsection{Connected components of $\CA_T$} \label{subsec-conncomp}
In the following we denote connected components of $\CA_T$ by upper case Greek letters like $\Lambda$, $\Omega$, etc. 
\begin{lemma}\label{lemma-concomp} The connected components of $\CA_T$ coincide with the $\hCW_T$-orbits in $\CA_T$.
\end{lemma}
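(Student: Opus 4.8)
The plan is to prove both inclusions: that every $\hCW_T$-orbit is contained in a connected component, and that connected components cannot be strictly larger than $\hCW_T$-orbits. The first direction amounts to showing that $A$ and $sA$ lie in the same connected component whenever $s$ is one of the generators of $\hCW_T$, i.e. $s=t_\gamma$ with $\gamma\in\DZ R$ or $s=s_{\alpha,m}$ with $\alpha\in R_T^+$. For $s=s_{\alpha,m}$, exactly one of $A,s_{\alpha,m}(A)$ lies in $H_{\alpha,m}^-$, so by the definition of $\preceq_T$ we have either $A\preceq_T s_{\alpha,m}(A)$ or $s_{\alpha,m}(A)\preceq_T A$; either way the two alcoves are comparable, hence the set $\{A, s_{\alpha,m}(A)\}$ is connected (any open set containing one of two comparable points and separating them would have to be an order ideal missing the larger one, contradiction — more precisely, two comparable points always lie in the same connected component, since a separation into two opens would put them in disjoint order ideals, impossible). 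For $s=t_\gamma$ with $\gamma\in\DZ R$, write $\gamma=\gamma_+-\gamma_-$ with $\gamma_+,\gamma_-\in\DZ_{\ge0}R^+$; then $A\preceq_T A+\gamma_+$ and $A+\gamma=A+\gamma_+-\gamma_-\preceq_T A+\gamma_+$ by Lemma~\ref{lemma-gentop}, so again $A$ and $A+\gamma$ are both comparable to the common alcove $A+\gamma_+$ and hence lie in one connected component. Chaining these observations along a word in the generators shows each $\hCW_T$-orbit lies in a single connected component.

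For the reverse inclusion, the key point is that the generating relations for $\preceq_T$ only move an alcove within its $\hCW_T$-orbit. Indeed, if $A\preceq_T B$ is one of the generating relations, then $B=t_\gamma(A)$ with $\gamma\in\DZ_{\ge0}R^+\subset\DZ R$, and $t_\gamma\in\hCW_T$ since $\hCW_T\supseteq\DZ R$; or $B=s_{\alpha,n}(A)$ with $\alpha\in R_T^+$, and $s_{\alpha,n}\in\hCW_T$ by the definition of $R_T^+$ together with the remark that $s_{\alpha,n}\in\hCW_T$ for one $n$ iff for all $n$. Hence the full partial order $\preceq_T$, being generated by these relations, satisfies: $A\preceq_T B$ implies $A$ and $B$ are in the same $\hCW_T$-orbit. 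Consequently every order ideal, hence every open set, is a union of $\hCW_T$-orbits, and likewise every closed set; so each $\hCW_T$-orbit is simultaneously open and closed in $\CA_T$, i.e. it is a union of connected components. Combined with the first direction — each orbit is contained in one connected component — this forces each $\hCW_T$-orbit to be precisely one connected component.

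The only mildly delicate point, which I would state as a small preliminary lemma, is the topological fact that two comparable elements of a poset always lie in the same connected component of the associated Alexandrov-type topology: if $A\preceq B$ and $\CJ_1\sqcup\CJ_2$ were a separation with $A\in\CJ_1$, then since $\CJ_1$ is an order ideal and $B\succeq A$ would not be forced into $\CJ_1$, one instead argues that $\{\preceq B\}$ is an open connected set (it has a maximum, so it is not a disjoint union of two nonempty opens, as any open containing $B$ must be all of $\{\preceq B\}$) containing both $A$ and $B$. This is the structural observation underlying the whole first half; everything else is bookkeeping with the definitions of $\hCW_T$, $R_T^+$, and $\preceq_T$. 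I expect no serious obstacle — the main thing to get right is the careful matching between the generators of $\hCW_T$ and the generating relations of $\preceq_T$, ensuring in particular that the translation part genuinely stays inside $\hCW_T$ (which it does because $\DZ R\subseteq\hCW_T$, not merely $\DZ_{\ge0}R^+$).
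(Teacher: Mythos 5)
Your proof is correct and follows essentially the same approach as the paper: match the generators of $\preceq_T$ (translations by $\DZ_{\ge0}R^+$ and reflections $s_{\alpha,n}$ with $\alpha\in R_T^+$) against the generators of $\hCW_T$, and conclude. The paper compresses this into three asserted equivalences — same component $\Leftrightarrow$ zigzag chain of $\preceq_T$-comparable alcoves $\Leftrightarrow$ chain of $\hCW_T$-generators $\Leftrightarrow$ same orbit — leaning implicitly on the standard fact that in this Alexandrov-type topology, connected components are exactly the zigzag-equivalence classes; you instead prove that fact directly by showing each orbit is clopen (for one inclusion) and that comparable alcoves share a component via the connectedness of $\{\preceq_T B\}$ (for the other), which is a clean way to make the topological step explicit. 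Your handling of translations by general $\gamma\in\DZ R$ via the decomposition $\gamma=\gamma_+-\gamma_-$ and the intermediate alcove $A+\gamma_+$ is exactly the point the paper's invocation of Remark \ref{rem-ordexp} is meant to absorb.
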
 
\begin{proof} Let $A,B\in\CA$. Then $A$ and $B$ are contained in the same connected component  if and only if there is a sequence $A=A_0$, $A_1$, \dots, $A_n=B$ in $\CA$ such that for all $i=1,\dots, n$ we have either $A_{i}\preceq_T A_{i+1}$ or $A_{i+1}\preceq_TA_i$.  From Remark \ref{rem-ordexp} it follows that this is the case if and only if there is a sequence $A=A_0$, $A_1$,\dots, $A_n=B$ where $A_{i+1}=t_{\gamma}(A_i)$ for some $\gamma\in\DZ R$, or $A_{i+1}=s_{\alpha,m}(A_i)$ for some $\alpha\in R_T^+$ and $m\in\DZ$. This is the case if and only if $A$ and $B$ are in the same $\hCW_T$-orbit.  
\end{proof}

 Denote by $A_e$ the unique alcove that is contained in the dominant Weyl chamber $\{\lambda\in V\mid \langle \lambda,\alpha^\vee\rgl>0\text{ for all $\alpha\in R^+$}\}$  and contains $0$ in its closure. Then the map 
 $$
 \hCW\to\CA,\quad w\mapsto A_w:=w(A_e)
 $$
 is a bijection as $\CA$ is a principal homogeneous $\hCW$-space (\cite[Section 4.5]{Hum}). 
This allows us to construct the following right action of $\hCW$ on $\CA$. For $A=A_x\in\CA$ and $w\in\hCW$ let $Aw:=A_{xw}$. Clearly, this right action commutes with the left action.
This action is  not continuous, but it preserves the connected components, as we show in part (1) of the following result.  
\begin{lemma}\label{lemma-comp} \begin{enumerate}
\item Let $\Lambda\subset\CA_T$ be a connected component. Then $\Lambda w$ is again a connected component. 
\item Let $T\to T^\prime$ be a homomorphism of base rings. Then every connected component of $\CA_T$ is a union of connected components of $\CA_{T^\prime}$.
\end{enumerate}
\end{lemma}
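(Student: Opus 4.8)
For part (1), the plan is to use the fact (established above via $\hCW\to\CA$, $w\mapsto A_w$, being a bijection) that the right action of $\hCW$ on $\CA$ commutes with the left action, together with Lemma \ref{lemma-concomp} identifying connected components of $\CA_T$ with $\hCW_T$-orbits. So let $\Lambda$ be a connected component; by Lemma \ref{lemma-concomp} it is an $\hCW_T$-orbit, say $\Lambda=\hCW_T\cdot A$ for some $A\in\CA$. Since the right action by $w$ commutes with the left action of every element of $\hCW$ (in particular of $\hCW_T\subseteq\hCW$), we get $\Lambda w=(\hCW_T\cdot A)w=\hCW_T\cdot(Aw)$, which is again an $\hCW_T$-orbit, hence by Lemma \ref{lemma-concomp} again a connected component. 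That is the whole argument for (1); the only point requiring a line of care is that the right action is defined via the $\hCW$-bijection, so ``$(wA)w'=w(Aw')$'' is literally the statement $A_{(w'')xw'}$ interpreted two ways, which is immediate from associativity in $\hCW$ applied to $w''\cdot x\cdot w'$.

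For part (2), I would invoke Remark \ref{rem-ordorb}: a homomorphism of base rings $T\to T'$ gives $R_{T'}^+\subseteq R_T^+$, hence $\hCW_{T'}\subseteq\hCW_T$, and the identity map $\CA_{T'}\to\CA_T$ is continuous. Continuity of a map between these Alexandrov-type spaces forces the preimage of a connected set to be a union of connected components only after one checks the relevant direction. The clean way: by Lemma \ref{lemma-concomp}, connected components of $\CA_{T'}$ are $\hCW_{T'}$-orbits and those of $\CA_T$ are $\hCW_T$-orbits. Since $\hCW_{T'}\subseteq\hCW_T$, every $\hCW_{T'}$-orbit is contained in a single $\hCW_T$-orbit, and conversely an $\hCW_T$-orbit is a disjoint union of the $\hCW_{T'}$-orbits it contains. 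Thus each connected component of $\CA_T$ is a disjoint union of connected components of $\CA_{T'}$, as claimed.

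The proof is essentially bookkeeping with orbits, so there is no serious obstacle; the main thing to get right is simply to make sure Lemma \ref{lemma-concomp} is applied in both directions and that the right action is correctly seen to permute $\hCW_T$-orbits (i.e.\ that $\hCW_T$ is preserved in the sense that left-multiplying by $\hCW_T$ and right-multiplying by a fixed $w$ do not interfere). If one prefers to avoid the group-theoretic description in part (1) and argue topologically, the subtle point would be that the right action by $w$ is \emph{not} continuous, so one genuinely needs the orbit description from Lemma \ref{lemma-concomp} rather than a direct continuity argument — I would therefore present the orbit-based proof as above.
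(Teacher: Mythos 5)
Your proof is correct and follows the same route as the paper: both reduce to Lemma \ref{lemma-concomp} (connected components are $\hCW_T$-orbits), use commutativity of the left and right $\hCW$-actions for part (1), and the inclusion $\hCW_{T'}\subseteq\hCW_T$ for part (2). You merely spell out the orbit bookkeeping that the paper leaves implicit, and your aside that a naive continuity argument is unavailable (since the right action is not continuous) is a correct and useful clarification.
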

\begin{proof}
Claim (1) follows from Lemma \ref{lemma-concomp} and the fact that the right action of $\hCW$ on $\CA$ commutes with the left action. Claim  (2) follows from Lemma \ref{lemma-concomp} and the fact that $\hCW_{T^\prime}$ is a subgroup of $\hCW_T$.
\end{proof}

\subsection{The finite Weyl group and $\DZ R$-orbits}\label{subsec-finWeyl}
Denote by $\CV=\CA/\DZ R$ the set of $\DZ R$-orbits in $\CA$ (under the left action), and let $\pi\colon\CA\to\CV$ be the orbit map. We often denote by $\ol A$ or $\ol\Lambda$ the image of $A\in\CA$ or $\Lambda\subset\CA$ in $\CV$. Note that every connected component $\Lambda$ of $\CA_T$ is stable under the action of $\DZ R$ (by Lemma \ref{lemma-concomp}). Hence $\Lambda=\pi^{-1}\pi(\Lambda)$. 

Denote by $\CW\subset\hCW$ the finite Weyl group, i.e the subgroup generated by the $\DR$-linear transformations $s_{\alpha,0}\colon V\to V$ for $\alpha\in R^+$.   Lemma \ref{lemma-easypeasy} implies that the $\hCW$-action on $\CA$ induces a $\hCW$-action on     $\CV$. It has the property that for all $\alpha\in R^+$, $m,n\in\DZ$ and $x\in\CV$ we have $s_{\alpha,n}(x)=s_{\alpha,m}(x)$. As $\hCW=\CW\ltimes\DZ R$ and as $\CA$ is a principal homogeneous $\hCW$-set, $\CV$ is a principal homogeneous $\CW$-set. For convenience, we abbreviate $s_\alpha:=s_{\alpha,0}$ for all $\alpha\in R^+$. 

\begin{lemma} Suppose that $T$ is subgeneric with $R_T^+=\{\alpha\}$. Let $\Lambda$ be a connected component of $\CA_T$. Then $\Lambda$ is the union of  two distinct $\DZ R$-orbits $x$ and $y$ that satisfy $y=s_\alpha x$.
 \end{lemma}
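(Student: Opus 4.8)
The plan is to use Lemma~\ref{lemma-concomp}, which identifies the connected components of $\CA_T$ with the $\hCW_T$-orbits. Since $T$ is subgeneric with $R_T^+=\{\alpha\}$, by the third Remark in Section~\ref{subsec-basering} we have $\hCW_T=\{\id,s_{\alpha,0}\}\ltimes\DZ R$. Fix a connected component $\Lambda$ and pick $A\in\Lambda$; then $\Lambda=\hCW_T\cdot A=(\DZ R\cdot A)\cup(\DZ R\cdot s_{\alpha,0}(A))$, which exhibits $\Lambda$ as a union of at most two $\DZ R$-orbits. Writing $x=\pi(A)$ and $y=\pi(s_{\alpha,0}(A))$, the compatibility of the $\hCW$-action on $\CA$ with the induced $\hCW$-action on $\CV=\CA/\DZ R$ (Section~\ref{subsec-finWeyl}) gives $y=s_\alpha(x)$, where $s_\alpha=s_{\alpha,0}$ acts on $\CV$ as described there. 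Thus $\Lambda=x\cup y$ with $y=s_\alpha x$.

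The one remaining point is that $x\ne y$, i.e.\ that $s_\alpha$ does not fix the $\DZ R$-orbit of $A$. First I would observe this is intrinsic, since $\CV$ is a principal homogeneous $\CW$-set (Section~\ref{subsec-finWeyl}), so $s_\alpha x = x$ would force $s_\alpha=\id$ in $\CW$, which is false. Hence $x\ne y$, and since $\Lambda=\pi^{-1}\pi(\Lambda)$ is $\DZ R$-stable (Section~\ref{subsec-finWeyl}), $\Lambda$ is genuinely the disjoint union of the two orbits $x$ and $y$. I expect the main (and only real) obstacle is simply making sure the $\hCW_T$-orbit is correctly unpacked using the semidirect product description $\hCW_T=\{\id,s_{\alpha,0}\}\ltimes\DZ R$ and Lemma~\ref{lemma-easypeasy}(2), so that one indeed gets \emph{exactly} two $\DZ R$-cosets rather than something larger; but since $\hCW_T/\DZ R$ has order $2$ this is immediate, and the distinctness of the two cosets is exactly the statement $s_\alpha x\ne x$ just proved.
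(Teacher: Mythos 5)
Your proof is correct and takes essentially the same route as the paper: both reduce to the observation that $\Lambda$ is a $\hCW_T$-orbit (by Lemma~\ref{lemma-concomp}), identify $\hCW_T=\{\id,s_\alpha\}\ltimes\DZ R$, and use the principal homogeneity of the $\hCW$-action to get exactly two distinct $\DZ R$-orbits. The paper phrases the distinctness as ``$\Lambda$ is a principal homogeneous $\hCW_T$-set'' while you invoke the equivalent fact that $\CV$ is a principal homogeneous $\CW$-set; these are the same observation.
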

 \begin{proof} $\Lambda$ is an orbit of the subgroup $\hCW_T$ of $\hCW$, hence it is a principal homogeneous $\hCW_T$-set. As $\hCW_T$ is generated by $s_{\alpha,n}$ for $n\in\DZ$ and $\DZ R$ we have $\hCW_T=\{\id,s_{\alpha}\}\ltimes\DZ R$ and the statement follows.
 \end{proof}

\subsection{Admissible families}\label{subsec-admfam} 
We will soon consider sheaves on the topological space $\CA_T$  that satisfy certain conditions. These conditions  assure that they are already determined once their restriction to ``$T$-admissible'' families of open subsets are known. Here is the definition of this notion.  
Suppose that $\DT$ is a family of subsets in $\CA$. 
\begin{definition}\label{def-Tadm} We say that $\DT$ is a  {\em $T$-admissible family}, if it  satisfies the following assumptions.
\begin{enumerate}
\item $\CA\in\DT$.
\item Each element in $\DT$ is $T$-open.
\item $\DT$ is stable under taking finite intersections.
\item For any $T$-open subset $\CJ$ and any $\DZ R$-orbit $x$ in $\CA$  there is some $\tCJ\in\DT$ with $\CJ\cap x=\tCJ\cap x$.
\end{enumerate}
\end{definition}
Here is an example of an admissible family. Let $T\to T^\prime$ be a homomorphism of base rings. 
\begin{lemma}\label{lemma-Top} The family $\DT$ of $T$-open subsets in $\CA$ is $T^\prime$-admissible.
\end{lemma}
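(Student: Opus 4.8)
I need to verify the four conditions in Definition~\ref{def-Tadm} for the family $\DT$ of all $T$-open subsets of $\CA$, regarding it as a candidate for a $T^\prime$-admissible family, where $T\to T^\prime$ is a homomorphism of base rings. Conditions (1) and (3) are immediate: $\CA$ is $T$-open (it is the whole space), and the intersection of two $T$-open sets is $T$-open by Remark~\ref{rem-top}(2), since $T$-open sets are exactly the open sets of the topological space $\CA_T$. Condition (2) requires that every element of $\DT$ be $T^\prime$-open; this is precisely the content of Remark~\ref{rem-ordorb}, which says that the identity map $i_{T^\prime}^T\colon\CA_{T^\prime}\to\CA_T$ is continuous, i.e. every $T$-open subset is $T^\prime$-open. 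So conditions (1)--(3) are essentially bookkeeping.

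The only real content is condition (4): given any $T^\prime$-open subset $\CJ$ of $\CA$ and any $\DZ R$-orbit $x$ in $\CA$, I must produce a $T$-open subset $\CJ^\prime$ with $\CJ\cap x=\CJ^\prime\cap x$. The key observation is Lemma~\ref{lemma-gentop}: the topology that $\CA_T$ (resp. $\CA_{T^\prime}$) induces on a $\DZ R$-orbit $x$ via the inclusion is independent of the base ring. Concretely, for $A\in x$ and $\gamma\in\DZ R$ one has $A\preceq_T A+\gamma$ iff $\gamma\in\DZ_{\ge0}R^+$, and the same with $T^\prime$ in place of $T$. Hence $\CJ\cap x$ is an open subset of $x$ for the common induced topology. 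The plan is then: take $\CJ^\prime:=(\CJ\cap x)_{\preceq_T}$, the smallest $T$-open subset of $\CA$ containing $\CJ\cap x$ (this exists by the discussion after Remark~\ref{rem-top}). This $\CJ^\prime$ is $T$-open by construction, so $\CJ^\prime\in\DT$. It remains to check $\CJ^\prime\cap x=\CJ\cap x$. The inclusion $\supseteq$ is clear since $\CJ\cap x\subseteq\CJ^\prime$. For $\subseteq$: an element of $\CJ^\prime\cap x$ is some $B\in x$ with $B\preceq_T A$ for some $A\in\CJ\cap x$; since $B,A$ lie in the same $\DZ R$-orbit, $B=A+\gamma$ for some $\gamma\in\DZ R$, and $B\preceq_T A$ forces (by Lemma~\ref{lemma-gentop}, applied with the roles so that $A = B + (-\gamma)$) that $-\gamma\in\DZ_{\ge 0}R^+$, i.e. $B\preceq_{T^\prime} A$; since $\CJ$ is $T^\prime$-open and $A\in\CJ$, we get $B\in\CJ$, hence $B\in\CJ\cap x$.

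I expect the main (modest) obstacle to be making precise the passage ``$B\preceq_T A$ within the orbit $x$ $\Rightarrow$ $B\preceq_{T^\prime} A$,'' i.e. checking that membership of $B$ in the order ideal generated by $A\cap x$ really only uses translations within the orbit and is therefore base-ring independent. This is exactly where Lemma~\ref{lemma-gentop} does the work: it identifies the induced order on a $\DZ R$-orbit with the order given by $\DZ_{\ge0}R^+$-translations, uniformly in $T$. Once that identification is in hand, the argument is a short formal manipulation with order ideals, and conditions (1)--(3) add nothing beyond citing Remark~\ref{rem-top} and Remark~\ref{rem-ordorb}.
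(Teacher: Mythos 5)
Your proof is correct and follows essentially the same route as the paper: conditions (1)--(3) are immediate from the definitions and Remark~\ref{rem-ordorb}, and condition (4) is deduced from Lemma~\ref{lemma-gentop} by observing that the induced order on a $\DZ R$-orbit is base-ring independent, so $\CJ':=(\CJ\cap x)_{\preceq_T}$ works. The paper's proof is terse (it just cites Lemma~\ref{lemma-gentop} for property (4)); you have correctly unpacked what that citation means.
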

\begin{proof} Clearly $\CA$ is $T$-open. Any $T$-open subset is $T^\prime$-open by Remark \ref{rem-ordorb}, and the set of $T$-open subsets is stable under taking (arbitrary) intersections. Property (4) follows from Lemma \ref{lemma-gentop}. \end{proof} 

In Lemma \ref{lemma-rigsinv} we introduce another admissible family.

 \section{The structure algebra}\label{sec-strucalg}
Again we fix a root system $R$. In this section we define the {\em structure algebra} $\CZ_S$ associated with $R$ over the field  $k$. This is a commutative, associative, unital $S$-algebra that is ubiquitous in algebraic Lie theory. For example, it occurs as the torus equivariant cohomology of flag varieties, or as the center of deformed blocks of the category $\CO$ of a semisimple Lie algebra (for $k=\DC$). 
 
 For any base ring $T$ we obtain the $T$-algebra $\CZ_T=\CZ_S\otimes_ST$.  We show that $\CZ_T$ has a canonical decomposition with the direct summands parametrized by the connected components of the topological space $\CA_T$.   Then we study the category of $\CZ_T$-modules. This category will be the target category  for the (pre-)sheaves that we are interested in. We introduce the notion of $\CZ$-support for $\CZ_T$-modules. Later we will define a {\em support condition} for presheaves of $\CZ_T$-modules on $\CA_T$. The main idea of this condition is that the $\CZ$-support and the sheaf theoretic support should be compatible. 
 
\subsection{The structure algebra} 
Recall that we denote by $\CV$  the principal homogeneous $\CW$-set of $\DZ R$-orbits in $\CA$. 
Define 
$$
\CZ_S:=\left\{(z_x)\in\bigoplus_{x\in \CV}S\left|\begin{matrix}\, z_x\equiv z_{s_{\alpha} x}\mod\alpha^\vee \\ \text{ for all $x\in\CV$ and  $\alpha\in R^+$ }\end{matrix}\right\}\right..
$$
More generally, for a subset $\CL$ of $\CV$ we define 
$$
\CZ_S(\CL):=\left\{(z_x)\in\bigoplus_{x\in \CL}S\left|\begin{matrix}\, z_x\equiv z_{s_{\alpha} x}\mod\alpha^\vee \\ \text{ for all $x\in\CL$ and  $\alpha\in R^+$ } \\ \text{ with $s_{\alpha} x\in\CL$}\end{matrix}\right\}\right.
$$

Denote by $\CZ_S^{\CL}$ the image of $\CZ_S$ under the projection $\bigoplus_{x\in\CV}S\to\bigoplus_{x\in\CL}S$  along the decomposition. Then $\CZ_S^{\CL}\subset\CZ_S(\CL)$, but in general, this is a proper inclusion.  Suppose $\CL_1$, \dots, $\CL_n$ are mutually disjoint subsets of $\CV$ with $\CV=\bigcup_{i=1}^n\CL_i$. Then $\CZ_S\subset\bigoplus_{i=1}^n\CZ_S^{\CL_i}\subset\bigoplus_{i=1}^n\CZ_S(\CL_i)\subset\bigoplus_{x\in\CV}S$. Clearly, $\CZ_S^{\{x\}}=S$ for any $x\in\CV$.

Now fix a base ring $T$ and define $\CZ_T:=\CZ_S\otimes_ST$. For a subset $\CL$ of $\CV$ define  $\CZ_T(\CL):=\CZ_S({\CL})\otimes_ST$ and $\CZ_T^\CL:=\CZ_S^\CL\otimes_S T$.  Since $T$ is  flat as an $S$-module we obtain a canonical inclusion $\CZ_T\subset\bigoplus_{x\in\CV} T$, and $\CZ_T^\CL\subset\CZ_T(\CL)$ is the image of $\CZ_T$ in $\CZ_T(\CL)$.

As before we obtain canonical  inclusions  $\CZ_T\subset\bigoplus_{i=1}^n\CZ_T^{\CL_i}\subset\bigoplus_{i=1}^n\CZ_T(\CL_i)\subset\bigoplus_{x\in\CV}T$ for a decomposition $\CV=\dot\bigcup_{i=1}^n\CL_i$, and $\CZ_T^{\{x\}}=T$ for $x\in\CV$. We apply this now to a particular case.
Denote by $C(\CA_T)$ the set of connected components of $\CA_T$. Recall that we denote by $\ol\Lambda$ the image of $\Lambda$ in $\CV$. As each connected component is stable under the action of $\DZ R$,  $\CV$ is the disjoint union of the $\ol\Lambda$, with $\Lambda$ running over the elements in $C(\CA_T)$.

\begin{lemma}  \label{lemma-decZcc}
We have $\CZ_T=\bigoplus_{\Lambda\in C(\CA_T)}\CZ_T({\ol\Lambda})$ and hence $\CZ_T^{\ol\Lambda}=\CZ_T(\ol\Lambda)$ for any $\Lambda\in C(\CA_T)$.
\end{lemma}

\begin{proof}
Let $\rho$ be the product of all coroots that are invertible in $T$. Let $(z_{\ol\Lambda})\in\bigoplus_{\Lambda\in C(\CA_T)}\CZ_S(\ol\Lambda)$. If $x\in\ol\Lambda$ and $\alpha\in R^+$ are such that $s_{\alpha}(x)\not\in\ol\Lambda$, then $\alpha\not\in R_T^+$, as $\Lambda$ is a $\hCW_T$-orbit. Hence $\alpha^\vee$  must be invertible in $T$. It follows that $(\rho z_{\ol\Lambda})$ defines an element in $\CZ_S$, so $(z_{\ol\Lambda})=(\rho z_{\ol\Lambda})\otimes\rho^{-1}$ is contained in $\CZ_T=\CZ_S\otimes_ST$. 
\end{proof}

We also need the following result. 
\begin{lemma}\label{lemma-subgensur} 
Let $x\in\CV$, $\alpha\in R^+$ and $\CL=\{x,s_\alpha x\}$. Then 
$$
\CZ_T^{\CL}=\CZ_T(\CL)=\{(z_x,z_{s_\alpha x})\in T\oplus T\mid z_x\equiv z_{s_\alpha x}\mod \alpha^\vee\}.
$$
\end{lemma}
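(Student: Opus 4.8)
The statement asks us to identify the image $\CZ^{\CL}$ of the projection $p^{\CL}\colon\CZ_S\to\bigoplus_{y\in\CV}S\to S\oplus S$ (followed by $\otimes_S T$) with the ``fibre-product'' algebra $\{(z_x,z_{s_\alpha x})\in T\oplus T\mid z_x\equiv z_{s_\alpha x}\mod\alpha^\vee\}$. One inclusion is immediate: by the very definition of $\CZ_S$ (Definition~\ref{def-strucalg}), every element $(z_y)\in\CZ_S$ satisfies $z_x\equiv z_{s_\alpha x}\bmod\alpha^\vee$, so $p^{\CL}(\CZ_S)$ lands in $\CZ_S(\CL)=\{(z_x,z_{s_\alpha x})\in S\oplus S\mid z_x\equiv z_{s_\alpha x}\bmod\alpha^\vee\}$, and after tensoring with the flat $S$-algebra $T$ we get $\CZ^{\CL}=\CZ^{\CL}_S\otimes_S T\subseteq\CZ_S(\CL)\otimes_S T$, which by flatness is exactly the displayed congruence-defined submodule of $T\oplus T$. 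So the content of the lemma is the \emph{reverse} inclusion, i.e. surjectivity of $p^{\CL}$ onto $\CZ_S(\CL)$ before base change (surjectivity is then preserved by $-\otimes_S T$, and combined with the injectivity/compatibility from flatness gives the claimed equality on the nose).

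\textbf{Key steps.} First I would reduce, via flatness of $T$ over $S$, to proving the statement over $S$ itself: it suffices to show $p^{\CL}\colon\CZ_S\to\CZ_S(\CL)$ is surjective, since then $p^{\CL}\otimes_S T$ is surjective onto $\CZ_S(\CL)\otimes_S T$, and flatness identifies $\CZ_S(\CL)\otimes_S T$ with the congruence submodule of $T\oplus T$ (tensoring the short exact sequence $0\to\CZ_S(\CL)\to S\oplus S\xrightarrow{\text{diff}}S/\alpha^\vee S\to 0$ with $T$ stays exact because $T$ is flat, giving $0\to\CZ_S(\CL)\otimes_S T\to T\oplus T\to T/\alpha^\vee T\to 0$). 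Second, to prove surjectivity of $p^{\CL}$ over $S$: given $(a,b)\in S\oplus S$ with $a\equiv b\bmod\alpha^\vee$, I need to produce $(z_y)_{y\in\CV}\in\CZ_S$ with $z_x=a$, $z_{s_\alpha x}=b$. The natural candidate is the ``delta-like'' element supported on the two-element orbit $\{x,s_\alpha x\}$ under the relevant subgroup, but one must be careful: the relations in $\CZ_S$ involve \emph{all} positive roots $\beta$ and all $y\in\CV$, so the support of a candidate must be closed under the congruence graph. The clean approach is to use the subgeneric structure algebra: consider the idempotent-type element. Concretely, I would use that $\CV$ is a principal homogeneous $\CW$-set, fix $x$, and note that $\alpha^\vee$ divides $a-b$; then define $z_x=a$, $z_{s_\alpha x}=b$, and $z_y = $ (some fixed element, e.g. $a$ or $0$, chosen consistently on each $\langle s_\beta:\beta\rangle$-coset) for the other $y$ — but to make the GKM/congruence conditions hold globally one instead writes the desired element as $a\cdot e + (b-a)\cdot f$ where $e=(1,1,1,\dots)$ is the unit of $\CZ_S$ and $f$ is an element of $\CZ_S$ with $f_x=0$, $f_{s_\alpha x}=1$, whose existence is the crux.

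\textbf{Constructing $f$ — the main obstacle.} The heart of the matter is exhibiting an element $f\in\CZ_S$ with $f_x=0$ and $f_{s_\alpha x}=1$; equivalently, after scaling, an element of $\CZ_S$ that is $0$ at $x$ and a unit (or at least a nonzerodivisor with the right residue) at $s_\alpha x$. This is where the GKM condition enters decisively: since $\alpha^\vee\notin k\beta^\vee$ for $\beta\neq\alpha$ and $\mathrm{char}\,k\neq 2$, the two linear forms $\alpha^\vee$ and any other relevant $\beta^\vee$ are not proportional, so one can build $f$ as an explicit polynomial — e.g. a suitable product $\prod_{\beta}\beta^\vee$ over edges separating the $\langle s_\alpha\rangle$-orbit of $x$ from its complement, times a correction — that vanishes at $x$, equals $\alpha^\vee$ (hence $\equiv 0\bmod\alpha^\vee$, consistent with $f_x-f_{s_\alpha x}\equiv 0$ only if $f_{s_\alpha x}\equiv 0$...). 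The delicate bookkeeping is that $f_x=0$, $f_{s_\alpha x}=1$ forces $0\equiv 1\bmod\alpha^\vee$, which is \emph{false} — so in fact $f$ cannot have $f_{s_\alpha x}=1$ exactly; rather one produces $g\in\CZ_S$ with $g_x=0$, $g_{s_\alpha x}=\alpha^\vee$, and then observes that for $(a,b)$ with $b-a = \alpha^\vee c$ (with $c\in S$, or $c\in T$ after base change), the element $a\cdot e + c\cdot g$ lies in $\CZ_S$ (resp. $\CZ_T$) and projects to $(a,b)$. Thus the real lemma to prove first is: there exists $g\in\CZ_S$ with $p^{\CL}(g)=(0,\alpha^\vee)$. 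I expect this to be the only nontrivial point, and I would prove it by an explicit formula using products of coroots and the pigeonhole/non-proportionality guaranteed by GKM, or alternatively by citing the known structure of the subgeneric structure algebra of a moment graph edge (the $A_1$ case), since $\CZ_S(\CL)$ is precisely the structure algebra of a single edge labelled $\alpha^\vee$, for which surjectivity of the projection from the full structure algebra is standard.
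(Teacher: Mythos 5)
Your set-up is correct and essentially mirrors the paper's: the inclusion $\CZ^{\CL}\subseteq L:=\{(z_x,z_{s_\alpha x})\in T\oplus T\mid z_x\equiv z_{s_\alpha x}\bmod\alpha^\vee\}$ is immediate, the unit $e=(1,1,\dots)$ handles the ``constant'' direction, and you correctly diagnosed that the crux is exhibiting a \emph{global} section of $\CZ_S$ whose $\CL$-projection spans the complementary rank-one direction of $L$. You also correctly noticed that a na\"ive element with $f_x=0$, $f_{s_\alpha x}=1$ is impossible (the congruence forces $1\equiv 0\bmod\alpha^\vee$) and corrected to asking for $g$ with $g_x=0$, $g_{s_\alpha x}=\alpha^\vee$, using $\mathrm{char}\,k\neq 2$ to get a generating pair. (The paper uses the equivalent generating pair $(1,1)$ and $(\alpha^\vee,-\alpha^\vee)$.)

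However, there is a genuine gap at precisely the step you flag as ``the crux'': you never actually construct $g$. The two routes you sketch do not close this gap. The ``products of coroots'' idea is not worked out, and it is not clear how one would arrange the GKM congruences simultaneously for \emph{every} edge of the graph on $\CV$ by a product formula supported near $\{x,s_\alpha x\}$; the difficulty is that the relations are global, not local to those two vertices. The alternative — ``citing the known structure of the subgeneric structure algebra of a moment graph edge, for which surjectivity of the projection from the full structure algebra is standard'' — is circular: surjectivity of $p^{\CL}$ onto the edge structure algebra is exactly what the lemma asserts, so it cannot be cited as known.

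What you are missing is the paper's short, explicit construction of the global section. Use the bijection $\CW\xrightarrow{\sim}\CV$, $w\mapsto w(x)$, and the linear $\CW$-action on $X^\vee\subset S$ given by $s_\beta(h)=h-\langle\beta,h\rangle\beta^\vee$. Define $z=(z_y)_{y\in\CV}$ by $z_{w(x)}:=w(\alpha^\vee)\in S_2\subset S$. Then for any $w\in\CW$ and $\beta\in R^+$,
$$
z_{s_\beta w(x)}-z_{w(x)} \;=\; s_\beta\bigl(w(\alpha^\vee)\bigr)-w(\alpha^\vee) \;=\; -\langle\beta,w(\alpha^\vee)\rangle\,\beta^\vee \;\equiv\; 0 \bmod \beta^\vee,
$$
so $z\in\CZ_S$; and by construction $z_x=\alpha^\vee$, $z_{s_\alpha x}=s_\alpha(\alpha^\vee)=-\alpha^\vee$. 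This single element, together with the unit, generates $L$ as a $T$-module after flat base change, completing the proof. Your ``$g$'' is then $g=\tfrac12(\alpha^\vee e-z)$. Note that this construction does not invoke the GKM condition beyond $\mathrm{char}\,k\neq 2$; the GKM hypothesis is not actually the engine here, contrary to what your plan suggests.
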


\begin{proof} 
Denote the right hand side of the alleged equation by $L$. Clearly, $\CZ_T^{\CL}$ is a subset of $L$. As the characteristic of $k$ is not $2$, $L$ is generated by $(1,1)$ and $(\alpha^\vee,-\alpha^\vee)$.  Clearly $(1,1)$ is contained in  $\CZ_T^\CL$. Consider the bijection $\CW\xrightarrow{\sim}\CV$, $w\mapsto w(x)$. The group $\CW$  acts on $X^\vee$ in such a way that  $s_{\beta}(h)=h-\langle\beta,h\rangle\beta^\vee$ for all $\beta\in R^+$ and $h\in X^\vee$. For $y\in\CV$ define $z_{y}=w(\alpha^\vee)$ for $y=w(x)$ with $w\in\CW$. Then $z_{s_\beta w(x)}-z_{w(x)}=-\lgl\beta,w(v)\rgl\beta^\vee\equiv 0\mod\beta^\vee$ for all $w\in\CW$ and $\beta\in R^+$, hence $z=(z_y)\in\CZ_S$.  We have  $z_x=\alpha^\vee$ and  $z_{s_\alpha x}=-\alpha^\vee$. Hence also the second generator of $L$  is contained in $\CZ_T^{\CL}$.
\end{proof}

\subsection{The connection to moment graphs}
Let $\Lambda$ be a connected component of $\CA_T$. We denote by $\CG_\Lambda$ the following moment graph over the lattice $X^\vee$. Its set of vertices is $\ol\Lambda$ (i.e., the set of $\DZ R$-orbits in $\Lambda$), and $x,y\in\ol\Lambda$ are connected by an edge if and only if $y=s_{\alpha}x$ for some $\alpha\in R^+$. This edge is then labeled by $\alpha^\vee$. Note that $x,s_\alpha x\in\ol\Lambda$ imply that $\alpha\in R_T^+$. Then $\CZ_S(\ol\Lambda)$ is, by definition,  the structure algebra over the field $k$ of the moment graph $\CG_\Lambda$. 

The category of $\CZ_S(\ol\Lambda)$-modules is intimately connected to the theory of sheaves on the moment graph $\CG_\Lambda$ (cf. \cite{FieAdv}). In this article we do not refer to sheaves on  moment graphs in order to avoid unnecessary confusion. In a forthcoming work we relate the sheaf category $\bS$ that we are about to define to the  category of moment graph sheaves not on $\CG_\Lambda$, but on an  affine version that is called the {\em stable moment graph} (cf. \cite{LanTAMS}).

\subsection{$\CZ$-modules} \label{subsec-Zmod} Again we fix a base ring $T$. We simplify notation and write $\CZ$, $\CZ^{\CL}$, $\CZ(\CL)$ instead of $\CZ_T$, $\CZ_T^{\CL}$, $\CZ_T(\CL)$.
Let $M$ be a $\CZ$-module. 

\begin{definition} \label{def-rtf} We say that $M$ is {\em root torsion free} if the left multiplication with $\alpha^\vee$ on $M$ is injective for all $\alpha\in R^+$.
\end{definition} 

Denote by $T^{\emptyset}$ the localization of $T$ at the multiplicative set generated by $\{\alpha^{\vee}\}_{\alpha\in R^+}$. Then the canonical homomorphism  $T\to T^{\emptyset}$ is injective,  $I_{T^\emptyset}=\emptyset$, and  $T^{\emptyset}$ is a generic base ring. So each connected component of $\CA_{T^\emptyset}$ is a $\DZ R$-orbit. In particular, Lemma \ref{lemma-decZcc} implies $\CZ_{T^\emptyset}=\bigoplus_{x\in\CV}T^{\emptyset}$. 

Let $M$ be a root torsion free $\CZ$-module.  Then the canonical map $M\to M\otimes_TT^{\emptyset}$, $m\mapsto m\otimes 1$, is injective. As $M\otimes_TT^{\emptyset}$ is a $\CZ_{T^\emptyset}$-module, we obtain a canonical decomposition $M\otimes_TT^{\emptyset}=\bigoplus_{x\in\CV}(M\otimes_TT^{\emptyset})^x$ such that $(z_x)\in \CZ_{T^\emptyset}$ acts on $(M\otimes_TT^{\emptyset})^x$ as multiplication with $z_x$. Let $\CL$ be a subset of $\CV$. We define $M^{\CL}$ as the image of the composition 
$$
M\to M\otimes_TT^{\emptyset}=\bigoplus_{x\in\CV}(M\otimes_TT^\emptyset)^x\to \bigoplus_{x\in\CL}(M\otimes_TT^\emptyset)^x,
$$
where the last map on the right is the projection along the decomposition and the map on the left is  $m\mapsto m\otimes 1$. We write $M^x$ instead of $M^{\{x\}}$ in case of a singleton $\CL=\{x\}\subset\CV$.

\begin{lemma} \begin{enumerate}
\item The $\CZ$-action on $M^{\CL}$ factors over the algebra homomorphism $\CZ\to\CZ^{\CL}$ and induces an isomorphism $\CZ^{\CL}\otimes_\CZ M\xrightarrow{\sim}M^{\CL}$.
\item We have a canonical identification $M=\bigoplus_{\Lambda\in C(\CA_T)}M^{\ol\Lambda}$.
\end{enumerate}
\end{lemma}
\begin{proof} The kernel of $\CZ\to\CZ^{\CL}$ consists of all $z=(z_x)_{x\in\CV}$ with $z_x=0$ for all $x\in\CL$. So the first claim in (1) follows from the fact that $z$ acts on $(M\otimes_TT^{\emptyset})^x$ by multiplication with $z_x$. The identification $M\xrightarrow{\sim}\CZ\otimes_\CZ M$ induces a surjective homomorphism $M^{\CL}\to\CZ^{\CL}\otimes_{\CZ}M$. From the fact that the composition $M^{\CL}\to \CZ^{\CL}\otimes_\CZ M\to M^{\CL}$ (the last map being $z\otimes m\mapsto zm$) is the identity, we deduce the second claim in (1). Claim (2) follows from Lemma \ref{lemma-decZcc}.
\end{proof}

\begin{lemma}\label{lemma-stalksZmod}
Suppose that $M$ is a root torsion free $\CZ$-module.
 \begin{enumerate} \item For any $\CL\subset\CV$, the $\CZ$-module $M^{\CL}$ is root torsion free again. \item If $\{\CL_i\}_{i\in I}$ is a finite set of subsets of $\CV$ with $\CV=\bigcup_{i\in I}\CL_i$, then there is a canonical injective homomorphism $M\to\bigoplus_{i\in I}M^{\CL_i}$.\end{enumerate}
\end{lemma}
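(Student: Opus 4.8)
The plan is to prove the two statements about root torsion free $\CZ$-modules separately, using the key fact that $M$ embeds into its localization $M\otimes_T T^\emptyset = \bigoplus_{x\in\CV}(M\otimes_T T^\emptyset)^x$, which was established just before the statement.

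For part (1), I would argue as follows. Since $\CZ^\CL$ is a subalgebra of $\bigoplus_{x\in\CL}T$ (by flatness of $T$ over $S$ and the definition of $\CZ^\CL$ as the image of $p^\CL$), and since $M^\CL = \CZ^\CL\otimes_\CZ M$, I need to see that each $\alpha^\vee$ acts injectively on $M^\CL$. The cleanest route is to localize: $M^\CL\otimes_T T^\emptyset = (\CZ^\CL\otimes_T T^\emptyset)\otimes_{\CZ_{T^\emptyset}}(M\otimes_T T^\emptyset)$. Now $\CZ_{T^\emptyset} = \bigoplus_{x\in\CV}T^\emptyset$ and $\CZ^\CL\otimes_T T^\emptyset$ is simply $\bigoplus_{x\in\CL}T^\emptyset$ (all the congruence conditions defining $\CZ^\CL_S$ become vacuous after inverting the coroots), so $M^\CL\otimes_T T^\emptyset = \bigoplus_{x\in\CL}(M\otimes_T T^\emptyset)^x$, a direct summand of $M\otimes_T T^\emptyset$ as a $T^\emptyset$-module; in particular $\alpha^\vee$ acts injectively there. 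The remaining point is that $M^\CL\to M^\CL\otimes_T T^\emptyset$ is injective, for which I need $M^\CL$ to be $T$-torsion free with respect to the multiplicative set generated by the coroots. But $M^\CL$ is a quotient of $M$ via the surjection $M\to M^\CL$ only up to base change; more directly, since $T$ is flat over $S$ and $T^\emptyset$ is a localization of $T$ (hence flat), and $M^\CL$ is a $T$-module, $M^\CL\to M^\CL\otimes_T T^\emptyset$ is injective iff $M^\CL$ has no coroot-torsion — which is exactly what we are trying to prove. So instead I would argue the injectivity of multiplication by $\alpha^\vee$ on $M^\CL$ directly: if $\alpha^\vee\cdot\overline{m} = 0$ in $M^\CL$, lift and chase through the presentation of $\CZ^\CL$ over $\CZ$. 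Alternatively, and more robustly, observe that $M^\CL$ is naturally a submodule of $\prod_{x\in\CL}M^x$ (using Lemma~\ref{lemma-stalksZmod}(2) style reasoning restricted to $\CL$), and each $M^x = \CZ^x\otimes_\CZ M$ with $\CZ^x = T$ — and $M^x$ is a direct summand of $M\otimes_T T^\emptyset$-ish... Here I need to be careful: the correct statement is that for root torsion free $M$, the stalk $M^x$ injects into $(M\otimes_T T^\emptyset)^x$, which is coroot-torsion free, hence $M^x$ is too; then $M^\CL\hookrightarrow\bigoplus_{x\in\CL}M^x$ gives root torsion freeness of $M^\CL$. So part (1) reduces to part (2) for the single-element subsets, applied inside $\CL$.

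For part (2), given a finite cover $\CV = \bigcup_{i\in I}\CL_i$, I want to show $M\to\bigoplus_{i\in I}M^{\CL_i}$ is injective. The strategy is to compare with the localization. We have the commutative square whose top row is $M\to\bigoplus_i M^{\CL_i}$, whose bottom row is $M\otimes_T T^\emptyset\to\bigoplus_i(M^{\CL_i}\otimes_T T^\emptyset) = \bigoplus_i\bigoplus_{x\in\CL_i}(M\otimes_T T^\emptyset)^x$, and whose left vertical map $M\to M\otimes_T T^\emptyset$ is injective because $M$ is root torsion free. The bottom map, after identifying $M\otimes_T T^\emptyset = \bigoplus_{x\in\CV}(M\otimes_T T^\emptyset)^x$, is the diagonal-type map $\bigoplus_{x\in\CV}N^x\to\bigoplus_i\bigoplus_{x\in\CL_i}N^x$ induced by the projections, which is injective precisely because the $\CL_i$ cover $\CV$ (every $x$ lies in some $\CL_i$, so no component of $N$ is killed). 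Injectivity of the left vertical and the bottom horizontal maps forces the composite $M\to\bigoplus_i M^{\CL_i}\otimes_T T^\emptyset$ to be injective, hence $M\to\bigoplus_i M^{\CL_i}$ is injective. This also retroactively supplies the ingredient I flagged for part (1): taking all $\CL_i$ to be singletons shows $M\hookrightarrow\bigoplus_{x\in\CV}M^x$, and the same argument localized inside any $\CL$ shows $M^\CL\hookrightarrow\bigoplus_{x\in\CL}M^x$, while each $M^x\hookrightarrow(M\otimes_T T^\emptyset)^x$ is visibly coroot-torsion free; so $M^\CL$ is root torsion free.

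The main obstacle I anticipate is keeping the bookkeeping of tensor products and localizations straight — specifically, verifying cleanly that $\CZ^\CL\otimes_T T^\emptyset$ collapses to $\bigoplus_{x\in\CL}T^\emptyset$ and that $M^\CL\otimes_T T^\emptyset$ is the corresponding sum of localized stalks, so that the whole argument is genuinely just ``a direct sum decomposition survives localization and a cover is jointly injective on a direct sum.'' Once that identification is in place, both parts are formal. The only real content is the injectivity $M\hookrightarrow M\otimes_T T^\emptyset$, which is given, plus the trivial observation that a family of coordinate projections out of a direct sum is jointly injective iff the index sets cover.
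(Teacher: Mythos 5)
Your treatment of part (2) is correct and matches the paper's in substance: the paper just says that if $m$ dies in every $M^{\CL_i}$ then, by the factorization through $M\to\bigoplus_{x\in\CL_i}(M\otimes_TT^\emptyset)^x$, it dies in every coordinate of $M\otimes_TT^\emptyset$ and hence vanishes; your commutative square with the injective left vertical and the cover-induced bottom map is the same idea, spelled out slightly more. The identification $M^{\CL_i}\otimes_TT^\emptyset=\bigoplus_{x\in\CL_i}(M\otimes_TT^\emptyset)^x$ that you flag does hold, by flatness of $T^\emptyset$ over $T$ over $S$ (image of $p^{\CL_i}$ commutes with flat base change, and $\CZ_{T^\emptyset}=\bigoplus_{x\in\CV}T^\emptyset$), so that piece of bookkeeping is fine.

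Part (1) is where the proposal has a genuine gap, and you are commendably aware of it without managing to close it. You correctly observe that the naive route (``$M^\CL\to M^\CL\otimes_TT^\emptyset$ is injective, done'') is precisely the conclusion in disguise, and you pivot. But the pivot relies on two unproven claims: first, that $M^x$ injects into $(M\otimes_TT^\emptyset)^x$; second, that $M^\CL$ injects into $\bigoplus_{x\in\CL}M^x$. Neither is a formal consequence of root torsion freeness of $M$. The first is the assertion that $\ker(M\to M^x)$ equals $\{m\in M: m_x=0\}$, i.e.\ that the composite $M\to M\otimes_TT^\emptyset\to(M\otimes_TT^\emptyset)^x$ has the same kernel as $p^x$; what one gets cheaply is only that $\rho\,m\in\ker p^x$ for $\rho=\prod_{\alpha\in R^+}\alpha^\vee$, since $\delta_x=\rho\,e_{\CV\setminus\{x\}}$ lies in $\CZ$ --- the remaining torsion is exactly the issue. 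The second is the claim you defer to ``part (2)-style reasoning restricted to $\CL$,'' but Lemma~\ref{lemma-stalksZmod}(2) is stated for root torsion free modules, and whether $M^\CL$ is root torsion free is what you are trying to prove, so that reduction is circular.

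What you are running into is the exact point the paper dispatches with ``it is easy to check that this composition\dots induces an injective homomorphism $M^\CL\subset\bigoplus_{x\in\CL}(M\otimes_TT^\emptyset)^x$.'' The easy half is the factorization through $p^\CL$ (universal property of $\CZ^\CL\otimes_\CZ(\cdot)$). The hard half is that the induced map is injective, equivalently that $\ker p^\CL=\{m: m_x=0\ \forall x\in\CL\}$ rather than a possibly larger submodule. Your proposal, like the paper's own text, never actually establishes this inclusion; everything else in part (1) is formal once it is granted. So the verdict is: part (2) is fine and essentially the paper's argument; part (1) identifies the right danger but does not supply the missing injectivity, which is the only real content of the statement.
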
 
\begin{proof} (1) By definition,  $M^{\CL}$ is a subset of $\bigoplus_{x\in\CL}(M\otimes_TT^{\emptyset})^x$. The module on the right is, as a $T^{\emptyset}$-module, root torsion free, hence so is $M^{\CL}$. 

(2) If $m$ is contained in the kernel of $p^{\CL_i}$ for any $i$, then the arguments above show that $m$ must be in the kernel of the inclusion $M\subset\bigoplus_{x\in\CV}(M\otimes_TT^{\emptyset})^x$, hence $m=0$. 
  \end{proof}
We still assume that $M$ is root torsion free. From the above we obtain a canonical injective homomorphim $M\to \bigoplus_{x\in\CV}M^x$.  This allows us to write each element $m$ of $M$ as a $\CV$-tuple $m=(m_x)_{x\in\CV}$ with $m_x\in M^x$. 

\begin{definition}\begin{enumerate}\item  We say that $M$ is {\em $\CZ$-supported on $\CL\subset\CV$} and we write $\supp_\CZ M\subset\CL$  if the canonical homomorphism $M\to M^{\CL}$ is an isomorphism.\item Let $m\in M$. We say that $m$ is {\em $\CZ$-supported on $\CL$} and we write $\supp_\CZ m\subset\CL$  if $m_x\ne 0$ implies $x\in\CL$. \end{enumerate} \end{definition}
Clearly, $M$ is $\CZ$-supported on $\CL$ if and only if all its elements are $\CZ$-supported on $\CL$.

\begin{lemma}\label{lemma-Zsupp} Let $f\colon M\to N$ be a homomorphism of root torsion free $\CZ$-modules and let $\CL$ be a subset of $\CV$. Then the following are equivalent.
\begin{enumerate}
\item $\ker f$ is $\CZ$-supported on $\CL\subset\CV$.
\item For all   $w\in\CV\setminus\CL$ the induced homomorphism $f^w\colon M^w\to N^w$ is injective.
\end{enumerate}
\end{lemma}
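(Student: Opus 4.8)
The plan is to prove the two statements are equivalent by using the canonical injective homomorphism $M \hookrightarrow \bigoplus_{x\in\CV} M^x$ for root torsion free modules (established just before the statement), together with the compatibility of the quotient functors $(\cdot)^{\CL}$ and $(\cdot)^w$. Throughout I will freely use Lemma~\ref{lemma-stalksZmod}, which guarantees that every stalk $M^w$ is itself root torsion free and that $M$ embeds into the product of its stalks.

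First I would unwind what ``$\ker f$ is $\CZ$-supported on $\CL$'' means in stalkwise terms. Since $\ker f$ is a submodule of the root torsion free module $M$, it is again root torsion free, so by the definition of $\CZ$-support together with the injection $\ker f \hookrightarrow \bigoplus_{x\in\CV}(\ker f)^x$, the module $\ker f$ is $\CZ$-supported on $\CL$ if and only if $(\ker f)^w = 0$ for all $w \in \CV\setminus\CL$. The main technical point is therefore to identify $(\ker f)^w$ with $\ker(f^w)$, or at least to show one vanishes iff the other does. Applying $\CZ^w\otimes_\CZ(\cdot)$ to the exact sequence $0 \to \ker f \to M \to N$ is only right exact in general, so I cannot directly conclude $(\ker f)^w = \ker(f^w)$. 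Instead I would argue via the embedding into the generic base ring: recall that $M \hookrightarrow M\otimes_T T^\emptyset = \bigoplus_{x\in\CV}(M\otimes_T T^\emptyset)^x$ and that $M^w$ is precisely the image of $M$ in $(M\otimes_T T^\emptyset)^w$. Since $T^\emptyset$ is flat over $T$, the functor $(\cdot)\otimes_T T^\emptyset$ is exact, and the decomposition into the $x$-components is just a direct sum decomposition; hence $\ker(f\otimes_T T^\emptyset)^w = \ker(f^w\otimes_T T^\emptyset)$ componentwise, and the analogous identity holds after passing to the images $M^w, N^w$ inside the $T^\emptyset$-localizations. This gives $(\ker f)^w$ as the image of $\ker f$ in $(M\otimes_T T^\emptyset)^w$, which coincides with $M^w \cap \ker(f^w\otimes_T T^\emptyset) = \ker(f^w)$ (using that $M^w \hookrightarrow M^w\otimes_T T^\emptyset$ is injective by root torsion freeness).

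With the identification $(\ker f)^w \cong \ker(f^w)$ in hand, both implications are immediate. For $(1)\Rightarrow(2)$: if $\ker f$ is $\CZ$-supported on $\CL$ then $(\ker f)^w = 0$ for $w \notin \CL$, so $\ker(f^w) = 0$, i.e. $f^w$ is injective. For $(2)\Rightarrow(1)$: if $f^w$ is injective for all $w \in \CV\setminus\CL$ then $(\ker f)^w = 0$ for all such $w$; since $\ker f$ embeds in $\bigoplus_{x\in\CV}(\ker f)^x = \bigoplus_{x\in\CL}(\ker f)^x$, and this product decomposition is exactly what it means for $\ker f$ to be $\CZ$-supported on $\CL$ (equivalently, $p^\CL\colon \ker f \to (\ker f)^\CL$ is an isomorphism, since injectivity of $p^\CL$ is automatic and surjectivity follows from the vanishing of the complementary stalks), we conclude $\supp_\CZ(\ker f) \subset \CL$.

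I expect the only real obstacle to be the commutation of kernels with the stalk functors $(\cdot)^w$; this is where the right-exactness of $\CZ^w\otimes_\CZ(\cdot)$ is insufficient and one genuinely needs the detour through the flat localization $T^\emptyset$ and the description of $M^w$ as the image of $M$ in $(M\otimes_T T^\emptyset)^w$. Once that bookkeeping is set up carefully — in particular keeping track that ``$\CZ$-support on $\CL$'' for a root torsion free module is equivalent to the vanishing of all stalks outside $\CL$ — the rest is formal.
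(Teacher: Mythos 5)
Your overall strategy is sound, but the pivotal identification $(\ker f)^w \cong \ker(f^w)$ that you build everything on is not true in general, and the way you try to justify it via the localization $T^\emptyset$ has a gap that actually matters. By flatness of $T^\emptyset$ you do get $\ker(f\otimes_T T^\emptyset)=(\ker f)\otimes_T T^\emptyset$, and this decomposes componentwise. But $(\ker f)^w$ is the image of $\ker f$ (not of $(\ker f)\otimes_T T^\emptyset$) in $(M\otimes_T T^\emptyset)^w$, and that image is in general strictly smaller than $M^w\cap\ker(f^w\otimes_T T^\emptyset)=\ker(f^w)$. Concretely, take $R$ of rank one with positive root $\alpha$, $\CV=\{e,s\}$, $T=S$, $M=\CZ$, $N=S\oplus S$ with componentwise $\CZ$-action, and $f\colon(z_e,z_s)\mapsto(z_e,0)$, which is $\CZ$-linear. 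Then $\ker f=\{(0,\alpha^\vee a):a\in S\}$, so $(\ker f)^s=\alpha^\vee S$, whereas $f^s=0\colon S\to S$ has $\ker(f^s)=S$. Thus $(\ker f)^w\subsetneq\ker(f^w)$ in general; you only ever have the inclusion $(\ker f)^w\subseteq\ker(f^w)$, which breaks your proof of the implication $(1)\Rightarrow(2)$ (you need the reverse inclusion there). The $(2)\Rightarrow(1)$ direction of your argument is fine, as it uses only the inclusion that does hold.

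The paper gets around this with a small but essential trick that your proof is missing: the element $\delta_w\in\CZ$ with $w$-component $\prod_{\alpha\in R^+}\alpha^\vee$ and all other components zero. Given $m\in M$ whose image $m_w\in M^w$ lies in $\ker(f^w)$, the element $\delta_w m$ is $\CZ$-supported on $\{w\}$, lies in $\ker f$ (all its image stalks vanish), and has $(\delta_w m)_w=\prod_{\alpha\in R^+}\alpha^\vee\, m_w$, which is nonzero whenever $m_w\neq 0$ because $M^w$ is root torsion free by Lemma~\ref{lemma-stalksZmod}. This yields the one-sided containment $\left(\prod_{\alpha\in R^+}\alpha^\vee\right)\ker(f^w)\subseteq(\ker f)^w$, which, combined with root torsion freeness and the easy inclusion $(\ker f)^w\subseteq\ker(f^w)$, gives the equivalence $(\ker f)^w=0\Leftrightarrow\ker(f^w)=0$ even though the two modules are not literally isomorphic. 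You should replace the claimed isomorphism with this weaker (but true and sufficient) statement, and the cut-off element $\delta_w$ is the tool you need to prove it.
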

\begin{proof} Let us show that (1) implies (2). So let $w\in\CV\setminus\CL$ and let $m\in M$ be such that $m_w\ne 0$, but  $f^w(m_w)=0$. Let $\delta_w=(\delta_{w,x})\in\bigoplus_{x\in\CV}T$ be defined by $\delta_{w,w}=\prod_{\alpha\in R^+}\alpha^\vee$ and $\delta_{w,x}=0$ for all $x\ne 0$. Then $\delta_{w}\in\CZ$. As $M^w$ is root torsion free, $(\delta_wm)_w=\prod_{\alpha\in R^+}\alpha^\vee m_w\ne 0$. But  $\delta_wm$ is supported on $\{w\}$, so it must be in the kernel of $f$ as $f^w(\prod_{\alpha\in R^+}\alpha^\vee m_w)=0$, which contradicts the assumption in (1). Conversely, suppose that (2) holds. Let $m\in M$ be in the kernel of $f$. We have $f^x(m_x)=0$ for all $x\in\CV$, hence $m_x\ne 0$ implies $x\in\CL$. So $m$ is supported on $\CL$.  \end{proof}

\subsection{Root reflexive $\CZ$-modules}
For $\alpha\in R^+$ define 
$$T^{\alpha}:=T[\beta^{\vee-1}\mid\beta\in R^+,\beta\ne\alpha] 
$$
and, as defined before, 
$$
T^\emptyset:=T[\beta^{\vee-1}\mid \beta\in R^+].
$$  
Then $R^+_{T^\emptyset}=\emptyset$ and  $R^+_{T^\alpha}=\{\alpha\}$ or $R_{T^\alpha}^+=\emptyset$. So a connected component of $\CA_{T^{\emptyset}}$ is a $\DZ R$-orbit, and a connected component of $\CA_{T^\alpha}$ is either a $\DZ R$-orbit, or a union of two $\DZ R$-orbits $x$ and $y$ with $y=s_\alpha x$. If $M$ is  a root torsion free $\CZ$-module, then we can view $M$ as  a subset in $\bigcap_{\alpha\in R^+}(M\otimes_TT^{\alpha})\subset M\otimes_TT^{\emptyset}$. The following definition will become relevant later. 

\begin{definition}  We say that $M$ is {\em root reflexive} if it is root torsion free and $M=\bigcap_{\alpha\in R^+}M\otimes_TT^{\alpha}$.
 \end{definition}

\section{Sheaves on the alcoves} \label{sec-sheavesA}   

In this section we study presheaves on the topological space $\CA_T$ with values in the category of $\CZ_T$-modules.  The sheaves that are relevant for us are finitary and flabby, and their local sections are root torsion free or even root reflexive. But we need two more conditions. The first connects the sheaf theoretic notion of support with the $\CZ$-support defined earlier and is called the {\em support condition}. Once we defined this, we introduce a functor $(\cdot)^+$  that associates with a presheaf $\SM$ its maximal quotient satisfying the support condition. We then prove a ``rigidity'' result: A presheaf satisfying the support condition is already determined by its  restriction to an arbitrary admissible family of open subsets in $\CA_T$.  The category $\bP_T$ contains presheaves of $\CZ$-modules on $\CA_T$ that satisfy the support condition (and some minor technical assumptions). 

In order to state the second condition we construct a base change functor $(\cdot)\boxtimes_TT^{\prime}$ for any flat homomorphism $T\to T^\prime$ of base rings. It incorporates both the extension of scalars functor $(\cdot)\otimes_TT^{\prime}$ and the topological pull-back along $i\colon \CA_{T^\prime}\to\CA_T$, but also the  functor $(\cdot)^+$. For us, the most relevant  category is the category $\bS_T$ of sheaves of $\CZ_T$-modules on $\CA_T$ that satisfy the support condition and have the property that they remain sheaves after base change along  flat homomorphisms of base rings (again, we add some minor technical assumptions).


\subsection{The support condition} We fix a base ring $T$. We say that a presheaf $\SM$ of $\CZ$-modules on $\CA_T$ is {\em root torsion free} if every $\CZ$-module of local sections is root torsion free. Analogously we define \emph{root reflexive} presheaves.

 Let $\SM$ be a  presheaf of $\CZ$-modules on $\CA_T$ that is root torsion free and let $\CL$ be a subset of $\CV$. We  define a new presheaf $\SM^\CL$ by composing with  the functor $(\cdot)^\CL$, i.e. for an open subset $\CJ$ of $\CA_T$ we set
$$\SM^\CL(\CJ):=\SM(\CJ)^\CL=\CZ^{\CL}\otimes_{\CZ}\SM(\CJ)$$ with restriction homomorphism $\id\otimes r_{\CJ}^{\CJ^\prime}$. Again we write $\SM^x$ instead of $\SM^{\{x\}}$ in the case $\CL=\{x\}\subset\CV$. 
Note that the natural homomorphisms $\SM(\CJ)\to\SM(\CJ)^\CL$ combine and yield a morphism $p^{\CL}\colon\SM\to\SM^\CL$ of presheaves.  Lemma \ref{lemma-stalksZmod} implies that there is an injective morphism $\SM\to\bigoplus_{x\in\CV}\SM^x$ of presheaves on $\CA_T$. 

Let $x$ be a $\DZ R$-orbit in $\CA$ and  denote by $i_x\colon x\to\CA_T$ the inclusion. Consider the presheaf $i_{x\ast}i_x^\ast\SM$. Recall that for an open set $\CJ$ we denote by $(\CJ\cap x)_{\preceq_T}$ the smallest $T$-open subset that contains $\CJ\cap x$. Then $$(i_{x\ast}i_x^\ast\SM)(\CJ)=\SM((\CJ\cap x)_{\preceq_T}),$$ and the restriction homomorphism $(i_{x\ast}i_x^\ast\SM)(\CJ)\to (i_{x\ast}i_x^\ast\SM)(\CJ^\prime)$ is the restriction homomorphism $\SM((\CJ\cap x)_{\preceq_T})\to\SM((\CJ^\prime\cap x)_{\preceq_T})$.  The restriction homomorphisms $\SM(\CJ)\to\SM((\CJ\cap x)_{\preceq_T})$ combine and yield a morphism $\SM\to i_{x\ast}i_x^\ast\SM$ of presheaves. Clearly, the functors $(\cdot)^x$ and $i_{x\ast}i_x^\ast(\cdot)$ naturally commute.
The following definition is central for our approach. 

\begin{definition} 
We say that  $\SM$ {\em satisfies the support condition} if $\SM$ is root torsion free and if for any $\DZ R$-orbit $x$ in $\CA$ the natural morphism $\SM^x\to i_{x\ast}i_x^\ast\SM^x$ is an isomorphism. 
\end{definition} 
Explicitely, $\SM$ satisfies the support condition if for any open subset $\CJ$ and any $x\in\CV$ the natural homomorphism $\SM(\CJ)^x\to\SM((\CJ\cap x)_{\preceq_T})^x$ is an isomorphism. 
Here is an equivalent notion of the support condition. 
\begin{lemma} \label{lemma-suppcond} Let $\SM$ be a flabby, root torsion free presheaf of $\CZ$-modules on $\CA_T$. Then the following are equivalent.\begin{enumerate}\item $\SM$ satisfies the support condition.\item  For any inclusion $\CJ^\prime\subset\CJ$ of open sets in $\CA_T$ the kernel of the restriction homomorphism $\SM(\CJ)\to\SM{(\CJ^\prime)}$ is $\CZ$-supported on $\pi(\CJ\setminus\CJ^\prime)$. \end{enumerate}\end{lemma}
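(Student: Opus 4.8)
The plan is to prove both implications by unwinding the definitions and using the local-to-$\CZ^x$-stalk machinery developed above, in particular Lemma \ref{lemma-Zsupp} and Lemma \ref{lemma-stalksZmod}. First I record the elementary observation that for open sets $\CJ^\prime\subset\CJ$ and any $\DZ R$-orbit $x$ one has $(\CJ\cap x)_{\preceq_T}=(\CJ^\prime\cap x)_{\preceq_T}$ precisely when $x\subseteq\CA\setminus(\CJ\setminus\CJ^\prime)$, i.e. when $x$ does not meet $\CJ\setminus\CJ^\prime$; this is because $\CJ$ and $\CJ^\prime$ are order ideals, so $\CJ\cap x$ and $\CJ^\prime\cap x$ are already $T$-open inside the $\DZ R$-orbit $x$ (using Lemma \ref{lemma-gentop}, the topology on $x$ is the fixed $\preceq$-topology), hence $(\CJ\cap x)_{\preceq_T}=\CJ\cap x$ and similarly for $\CJ^\prime$. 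Consequently the map $\SM(\CJ)^x\to\SM((\CJ\cap x)_{\preceq_T})^x$ compares, via the support condition, the restriction $\SM(\CJ)^x\to\SM(\CJ^\prime)^x$ with an isomorphism whenever $x$ avoids $\CJ\setminus\CJ^\prime$.

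For (1) $\Rightarrow$ (2): fix $\CJ^\prime\subset\CJ$ and set $\CL=\pi(\CJ\setminus\CJ^\prime)$. By Lemma \ref{lemma-Zsupp} applied to the restriction homomorphism $f\colon\SM(\CJ)\to\SM(\CJ^\prime)$, it suffices to show $f^w\colon\SM(\CJ)^w\to\SM(\CJ^\prime)^w$ is injective for every $w\in\CV\setminus\CL$. For such $w$ the orbit $w$ is disjoint from $\CJ\setminus\CJ^\prime$, so by the observation above $(\CJ\cap w)_{\preceq_T}=(\CJ^\prime\cap w)_{\preceq_T}=:\CJ_w$, and the support condition gives that both $\SM(\CJ)^w\to\SM(\CJ_w)^w$ and $\SM(\CJ^\prime)^w\to\SM(\CJ_w)^w$ are isomorphisms; since $f^w$ is compatible with these restrictions, $f^w$ is itself an isomorphism, in particular injective. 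Hence $\ker f$ is $\CZ$-supported on $\CL$.

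For (2) $\Rightarrow$ (1): fix $x\in\CV$ and an open $\CJ$; write $\CJ^\prime=(\CJ\cap x)_{\preceq_T}$, which is an open subset of $\CJ$, and we must show $\SM(\CJ)^x\to\SM(\CJ^\prime)^x$ is an isomorphism. Surjectivity follows from flabbiness of $\SM$ (hence of $\SM^x$, since $(\cdot)^x$ is right exact) applied to the inclusion $\CJ^\prime\subset\CJ$. For injectivity, note that $\CJ\setminus\CJ^\prime$ is disjoint from $x$: indeed $\CJ\cap x\subseteq\CJ^\prime$ by construction, so $x\cap(\CJ\setminus\CJ^\prime)=\emptyset$, i.e. $x\notin\pi(\CJ\setminus\CJ^\prime)=:\CL$. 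By hypothesis (2) the kernel $K$ of $\SM(\CJ)\to\SM(\CJ^\prime)$ is $\CZ$-supported on $\CL$, so $K^x=\CZ^x\otimes_\CZ K=0$ because $x\notin\CL$; since $(\cdot)^x$ is exact enough — more precisely, tensoring the short exact sequence $0\to K\to\SM(\CJ)\to\im\to 0$ with the quotient $\CZ^x$ and using that $\im\subseteq\SM(\CJ^\prime)$ is root torsion free so that $\operatorname{Tor}$ terms vanish after inverting roots — the induced map $\SM(\CJ)^x\to\SM(\CJ^\prime)^x$ has trivial kernel. This gives the support condition.

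The main obstacle is the exactness bookkeeping in (2) $\Rightarrow$ (1): the functor $(\cdot)^x=\CZ^x\otimes_\CZ(\cdot)$ is only right exact in general, so one cannot simply conclude $K^x=0\Rightarrow\ker(f^x)=0$ by tensoring a kernel sequence. The clean way around it is to work inside the ambient root-torsion-free picture of Lemma \ref{lemma-stalksZmod}: for root torsion free $N$ the stalk $N^x$ is identified with the image of $N$ in $(N\otimes_TT^\emptyset)^x$, and $f^x$ is identified with the corresponding projection map; then $\ker f$ being $\CZ$-supported on $\CL$ with $x\notin\CL$ means exactly that every element of $\ker f$ has vanishing $x$-component in $\bigoplus_{y}N^y$, i.e. $\ker f$ maps to $0$ under $N\to N^x$, which forces $f^x$ to be injective. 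Making this identification precise — and checking that the presheaf restriction maps are compatible with the stalk-as-image description — is the one genuinely technical point; everything else is a direct translation of definitions.
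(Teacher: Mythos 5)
Your proposal follows the paper's own route and is correct in substance, but two points deserve comment. First, your opening observation asserts $(\CJ\cap x)_{\preceq_T}=\CJ\cap x$; this is false in general, since $(\cdot)_{\preceq_T}$ produces the smallest $T$-open subset of $\CA$, not of the orbit $x$, and therefore usually contains alcoves outside $x$. The statement you can actually prove and actually use is $(\CJ\cap x)_{\preceq_T}\cap x=\CJ\cap x$, and the equivalence you want, namely $(\CJ\cap x)_{\preceq_T}=(\CJ^\prime\cap x)_{\preceq_T}$ iff $x\cap(\CJ\setminus\CJ^\prime)=\emptyset$, follows directly: the backward implication is trivial, and for the forward one, if $A\in\CJ\cap x$ then $A\preceq_T B$ for some $B\in\CJ^\prime\cap x$, whence $A\in\CJ^\prime$ by openness of $\CJ^\prime$. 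Second, the ``main obstacle'' you flag in $(2)\Rightarrow(1)$ is not there: Lemma~\ref{lemma-Zsupp}, which you already invoke in the other direction, states precisely that $\CZ$-support of $\ker f$ on $\CL$ is \emph{equivalent} to injectivity of $f^w$ for all $w\in\CV\setminus\CL$. Applying its $(1)\Rightarrow(2)$ direction with $\CL=\pi\bigl(\CJ\setminus(\CJ\cap x)_{\preceq_T}\bigr)$ and $w=x$ finishes the injectivity argument with no further exactness bookkeeping. What you describe as the ``clean way around it'' via Lemma~\ref{lemma-stalksZmod} amounts to re-proving the content of Lemma~\ref{lemma-Zsupp} rather than citing it; the paper simply applies the lemma, which is what you should do as well.
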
 
\begin{proof} Suppose  (1)  and let $\CJ^\prime\subset\CJ$ be open. Suppose that $x\in\CV\setminus\pi(\CJ\setminus\CJ^\prime)$. We can identify the homomorphism $\SM(\CJ)^x\to\SM(\CJ^\prime)^x$ with $\SM((\CJ\cap x)_{\preceq_T})^x\to \SM((\CJ^\prime\cap x)_{\preceq_T})^x$. Now $x\not\in\pi(\CJ\setminus\CJ^\prime)$ is equivalent to 
  $\CJ\cap x=\CJ^\prime\cap x$, hence the latter  homomorphism is an isomorphism. So $\SM(\CJ)^x\to\SM(\CJ^\prime)^x$ is an isomorphism for all $x\in\CV\setminus\pi(\CJ\setminus\CJ^\prime)$. By Lemma \ref{lemma-Zsupp}  the kernel of $\SM(\CJ)\to\SM(\CJ^\prime)$ is  $\CZ$-supported on $\pi(\CJ\setminus\CJ^\prime)$. Hence (2) holds.  

Conversely, suppose that  (2) holds for $\SM$. Let $\CJ$ be open and $x\in\CV$. We need to show that $\SM(\CJ)^x\to\SM((\CJ\cap x)_{\preceq_T})^x$, i.e. the $x$-stalk of the restriction homomorphism,  is an isomorphism. As $\SM$ is flabby and the functor $(\cdot)^x$ is right exact, this homomorphism is surjective. Condition (2) implies, via Lemma \ref{lemma-Zsupp}, that it is injective, since $x\not\in\pi(\CJ\setminus(\CJ\cap x)_{\preceq_T})$.  \end{proof}

\begin{definition}
Denote by $\bP=\bP_T$ the full subcategory of the category of presheaves of $\CZ$-modules on $\CA_T$ that contains all objects $\SM$ that are flabby, finitary,  root torsion free and satisfy the support condition.
\end{definition}

 \subsection{The canonical decomposition of objects in $\bP$}
 
 Let $\SM$ be an object in $\bP$. The canonical decomposition of $\CZ$-modules studied in Section \ref{subsec-Zmod} implies that $\SM$ splits into a direct sum $\SM=\bigoplus_{\Lambda\in C(\CA_T)}\SM^{\ol\Lambda}$, where each $\SM^{\ol\Lambda}$ is a presheaf of $\CZ^{\ol\Lambda}$-modules. In general, a presheaf  on a topological space does not split into direct summands  supported on connected components. The support condition in our situation, however, ensures that objects in $\bP$ do. 
 
  \begin{lemma}\label{lemma-supp} Let $\SM$ be an object in $\bP$. For any $\Lambda\in C(\CA_T)$, $\SM^{\ol\Lambda}$ is supported, as a presheaf, on $\Lambda\subset\CA_T$.\end{lemma}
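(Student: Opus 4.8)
The plan is to unwind the definition of "supported on $\Lambda$" via the explicit description given just after Definition \ref{def-supppre}: the presheaf $\SM^{\ol\Lambda}$ is supported on $\Lambda$ if and only if for every $T$-open subset $\CJ$ of $\CA_T$ the restriction homomorphism $\SM^{\ol\Lambda}(\CJ)\to\SM^{\ol\Lambda}((\CJ\cap\Lambda)_{\preceq_T})$ is an isomorphism. So fix $\Lambda\in C(\CA_T)$ and a $T$-open set $\CJ$, and abbreviate $\CJ^\prime:=(\CJ\cap\Lambda)_{\preceq_T}$; note $\CJ^\prime\subseteq\CJ$ since $\CJ$ is already open. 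First I would observe that surjectivity is free: $\SM$ is flabby (objects of $\bP$ are), hence so is $\SM^{\ol\Lambda}$ as a direct summand, and therefore the restriction $\SM^{\ol\Lambda}(\CJ)\to\SM^{\ol\Lambda}(\CJ^\prime)$ is surjective. The content is injectivity.

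For injectivity I would pass to stalks. Since $\SM^{\ol\Lambda}$ is root torsion free (again, a direct summand of the root torsion free $\SM$), Lemma \ref{lemma-stalksZmod}(2) applied to the finite family $\{\{y\}\}_{y\in\CV}$ — or more economically to the partition $\CV=\ol\Lambda\,\dot\cup\,(\CV\setminus\ol\Lambda)$ — shows that $\SM^{\ol\Lambda}(\CJ)$ injects into $\bigoplus_{y\in\CV}\SM^{\ol\Lambda}(\CJ)^y$. Now $\SM^{\ol\Lambda}(\CJ)^y=\CZ^{\{y\}}\otimes_{\CZ}\CZ^{\ol\Lambda}\otimes_{\CZ}\SM(\CJ)$, which is zero for $y\notin\ol\Lambda$ because the idempotents cutting out $\ol\Lambda$ and $\{y\}$ are orthogonal in $\CZ$ (this uses the canonical decomposition, Lemma \ref{lemma-decZcc}); and for $y\in\ol\Lambda$ it equals $\SM(\CJ)^y=\SM^y(\CJ)$. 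Thus it suffices to prove that for each $y\in\ol\Lambda$ the map on $y$-stalks $\SM^y(\CJ)\to\SM^y(\CJ^\prime)$ is an isomorphism. But $\SM$ satisfies the support condition, so $\SM^y(\CJ)\xrightarrow{\ \sim\ }\SM^y((\CJ\cap y)_{\preceq_T})$ and $\SM^y(\CJ^\prime)\xrightarrow{\ \sim\ }\SM^y((\CJ^\prime\cap y)_{\preceq_T})$; so I only need $(\CJ\cap y)_{\preceq_T}=(\CJ^\prime\cap y)_{\preceq_T}$, and in fact it is enough to check $\CJ\cap y=\CJ^\prime\cap y$ for $y\in\ol\Lambda$.

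This last equality is where the connected-component hypothesis really enters. Since $y\in\ol\Lambda$, the whole $\DZ R$-orbit $y$ is contained in $\Lambda$ (each connected component is $\DZ R$-stable, by Lemma \ref{lemma-concomp}), so $\CJ\cap y=\CJ\cap\Lambda\cap y$. The inclusion $\CJ^\prime\cap y\subseteq\CJ\cap y$ is clear from $\CJ^\prime\subseteq\CJ$. For the reverse inclusion, take $A\in\CJ\cap y\subseteq\CJ\cap\Lambda$; then $A\in\CJ^\prime$ by definition of $\CJ^\prime=(\CJ\cap\Lambda)_{\preceq_T}$, and $A\in y$, so $A\in\CJ^\prime\cap y$. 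Hence $\CJ\cap y=\CJ^\prime\cap y$, completing the argument. The only mild subtlety — and the step I would watch most carefully — is the vanishing $\SM^{\ol\Lambda}(\CJ)^y=0$ for $y\notin\ol\Lambda$: it relies on the fact that after the canonical decomposition $\CZ=\bigoplus_{\Lambda}\CZ^{\ol\Lambda}$ the module $\CZ^{\ol\Lambda}\otimes_{\CZ}\CZ^{\{y\}}$ vanishes whenever $y$ lies in a different component, which is exactly orthogonality of the corresponding central idempotents; everything else is a formal chase through the definitions and the already-established Lemmas \ref{lemma-decZcc}, \ref{lemma-stalksZmod} and the support condition.
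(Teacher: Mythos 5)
Your proof is correct and takes essentially the same route as the paper's: after noting surjectivity from flabbiness, one shows the kernel of the restriction is $\CZ$-supported both on $\ol\Lambda$ (as a submodule of $\SM^{\ol\Lambda}(\CJ)$) and on $\CV\setminus\ol\Lambda$, hence zero. The only difference is that the paper obtains the second support fact in one step by citing Lemma~\ref{lemma-suppcond}, whereas you re-derive it by passing to stalks and observing $\CJ\cap y=(\CJ\cap\Lambda)\cap y$ for $y\in\ol\Lambda$ --- a longer but equivalent unwinding of that lemma.
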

\begin{proof}We need to show that for any open subset $\CJ$ of $\CA_T$, the restriction homomorphism $\SM^{\ol\Lambda}(\CJ)\to\SM^{\ol\Lambda}(\CJ\cap\Lambda)$ is an isomorphism. Since  $\pi(\CJ\setminus(\CJ\cap\Lambda))\subset\CV\setminus{\ol\Lambda}$ Lemma \ref{lemma-suppcond} implies that the kernel of the above restriction is $\CZ$-supported on $\CV\setminus{\ol\Lambda}$. As $\SM^{\ol\Lambda}(\CJ)$ is, by definition, $\CZ$-supported on ${\ol\Lambda}$, the kernel of the restriction is trivial. The restriction is surjective as $\SM$ is flabby. \end{proof}

\subsection{The maximal quotient satisfying the support condition}

Let $\SM$ be a flabby and  root torsion free presheaf of $\CZ$-modules on $\CA_T$. For any $x\in\CV$ and any open subset $\CJ$ consider the composition  $\SM(\CJ)\to\SM((\CJ\cap x)_{\preceq_T})\to\SM((\CJ\cap x)_{\preceq_T})^x=i_{x\ast}i_x^\ast\SM^x(\CJ)$, where the first homomorphism is the restriction and the second the natural homomorphism onto the $x$-stalk. These homomorphisms combine and yield a morphism $\gamma^x\colon\SM\to i_{x\ast}i_x^\ast\SM^x$. Denote by $\gamma\colon\SM\to\bigoplus_{x\in\CV}i_{x\ast}i_x^\ast\SM^x$ the direct sum. Define $\SM^+$ as the image (in the category of presheaves) of $\gamma$, i.e. $\SM^+(\CJ)$ is the image of $\SM(\CJ)$ in $\bigoplus_{x\in\CV}\SM((\CJ\cap x)_{\preceq_T})^x$. This is a flabby and root torsion free presheaf again (cf. Lemma \ref{lemma-stalksZmod}). We now show that it also satisfies the support condition. 

 \begin{lemma} \label{lemma-inj} Let $\SM$ be a flabby, root torsion free presheaf.
 \begin{enumerate}
 \item The presheaf $\SM^+$ satisfies the support condition.
 \item $\SM$ satisfies the support condition if and only if $\gamma$ induces an isomorphism $\SM\cong\SM^+$, i.e. if and only if $\gamma$ is injective (in the category of presheaves).
 \end{enumerate} 
\end{lemma}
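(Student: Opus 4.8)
The plan is to prove the two parts in order, using the explicit description of $\SM^+$ as a sub-presheaf of $\bigoplus_{x\in\CV} i_{x\ast}i_x^\ast\SM^x$.

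\textbf{Part (1).} First I would unwind what the support condition asks for $\SM^+$: for every $\DZ R$-orbit $y$ and every open $\CJ$, the homomorphism $(\SM^+)^y(\CJ)\to(\SM^+)^y((\CJ\cap y)_{\preceq_T})$ must be an isomorphism. The key observation is that applying $(\cdot)^y$ to the inclusion $\SM^+(\CJ)\hookrightarrow\bigoplus_{x\in\CV}\SM((\CJ\cap x)_{\preceq_T})^x$ kills all summands with $x\ne y$ — because $\SM((\CJ\cap x)_{\preceq_T})^x$ is $\CZ$-supported on $\{x\}$, so tensoring with $\CZ^{\{y\}}$ over $\CZ$ gives zero for $x\ne y$ — and leaves exactly $\SM((\CJ\cap y)_{\preceq_T})^y$ on the $y$-summand. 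Here I should be a little careful: $(\cdot)^y$ is only right exact, so I need that the inclusion remains injective after applying $(\cdot)^y$; this follows because $\SM^+$ is root torsion free (by Lemma \ref{lemma-stalksZmod}), so by the canonical injection $(\SM^+)^y(\CJ)\hookrightarrow(\SM^+(\CJ)\otimes_T T^\emptyset)^y$ the $y$-stalk of $\SM^+(\CJ)$ is computed by projecting onto the $y$-component in $\bigoplus_{x}(\cdots)^x$, and that projection manifestly lands in $\SM((\CJ\cap y)_{\preceq_T})^y$. Thus $(\SM^+)^y(\CJ)$ is canonically identified with the image of $\SM(\CJ)$ inside $\SM((\CJ\cap y)_{\preceq_T})^y$ under the composite $\gamma^y$. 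Now the restriction map $(\SM^+)^y(\CJ)\to(\SM^+)^y((\CJ\cap y)_{\preceq_T})$ becomes the map between these two images induced by $\SM(\CJ)\to\SM((\CJ\cap y)_{\preceq_T})$; but $(\CJ\cap y)_{\preceq_T}\cap y=\CJ\cap y$, so on the $y$-stalk the target of $\gamma^y$ is literally the same object $\SM((\CJ\cap y)_{\preceq_T})^y$, and both images coincide. Hence the restriction is an isomorphism, which is the support condition for $\SM^+$.

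\textbf{Part (2).} One direction is immediate: if $\gamma$ is injective then $\SM\cong\SM^+$ as presheaves, and by part (1) $\SM^+$ — hence $\SM$ — satisfies the support condition. For the converse, suppose $\SM$ satisfies the support condition; I must show $\gamma$ is injective, i.e. $\gamma(\CJ)\colon\SM(\CJ)\to\bigoplus_{x\in\CV}\SM((\CJ\cap x)_{\preceq_T})^x$ is injective for every open $\CJ$. Since $\SM$ is root torsion free, Lemma \ref{lemma-stalksZmod}(2) (applied with the finite covering of $\CV$ coming from the finitely many relevant orbits, or more simply the canonical injection $\SM(\CJ)\hookrightarrow\bigoplus_{x\in\CV}\SM(\CJ)^x$) shows that the direct sum of the stalk maps $p^x\colon\SM(\CJ)\to\SM(\CJ)^x$ is injective. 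But by the support condition each $p^x$ factors as $\SM(\CJ)\to\SM((\CJ\cap x)_{\preceq_T})^x$ followed by the inverse of the (iso) restriction $\SM(\CJ)^x\xrightarrow{\sim}\SM((\CJ\cap x)_{\preceq_T})^x$; in particular the $x$-component of $\gamma(\CJ)$ has the same kernel as $p^x$. Therefore $\ker\gamma(\CJ)=\bigcap_x\ker p^x=0$, so $\gamma$ is injective, and then $\SM=\im\gamma=\SM^+$.

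\textbf{Anticipated obstacle.} The only genuinely delicate point is the interchange in Part (1): verifying that applying the right-exact functor $(\cdot)^y$ to the defining inclusion of $\SM^+$ really does produce an \emph{injection} onto the $y$-summand and not merely a surjection, so that $(\SM^+)^y(\CJ)$ is correctly identified with the image of $\gamma^y$ rather than some proper quotient. This is where root torsion freeness of $\SM^+$ (via Lemma \ref{lemma-stalksZmod}(1) and the embedding into $(\cdot)\otimes_T T^\emptyset$) does the work, and I would state this identification carefully as a lemma-step before concluding. Everything else is a matter of chasing the equality $(\CJ\cap y)_{\preceq_T}\cap y=\CJ\cap y$ and the defining property of $\CZ^{\{y\}}$.
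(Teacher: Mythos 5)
Your proof is correct and follows essentially the same route as the paper's: identify $(\SM^+)^y(\CJ)$ with the image of $\SM(\CJ)$ inside $\SM((\CJ\cap y)_{\preceq_T})^y$ via the inclusion into $\bigoplus_x i_{x\ast}i_x^\ast\SM^x$, use $(\CJ\cap y)_{\preceq_T}\cap y=\CJ\cap y$ to see the two stalks sit in the same module, and in Part (2) combine injectivity of $\SM\to\bigoplus_x\SM^x$ (root torsion freeness) with the support condition's identification $\SM^x\xrightarrow{\sim} i_{x\ast}i_x^\ast\SM^x$. The one step you leave tacit, at ``both images coincide,'' is that the restriction $\SM(\CJ)\to\SM((\CJ\cap y)_{\preceq_T})$ is surjective by flabbiness — which is exactly how the paper justifies surjectivity of $\SM^{+x}\to i_{x\ast}i_x^\ast\SM^x$ — so you should invoke the flabbiness hypothesis there explicitly.
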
 

\begin{proof} (1) Note that for any $x\in\CV$ we obtain an injective morphism $\SM^{+x}\to i_{x\ast}i_x^{\ast}\SM^x$ (as $\SM^+\subset\bigoplus_{y\in\CV}i_{y\ast}i^\ast_y\SM^y$ and as each local section of $i_{y\ast}i_y^{\ast}\SM^y$ is $\CZ$-supported on $y$). As $\SM$ is flabby, this morphism is also surjective, hence an isomorphism. This implies  $\SM^{+x}\cong i_{x\ast}i_x^{\ast}\SM^{+x}$. Hence $\SM^+$ satisfies the support condition. 

(2) If $\SM\cong\SM^+$, then $\SM$ satisfies the support condition by (1). Conversely, suppose that $\SM$ satisfies the support condition. The direct sum of the canonical homomorphisms onto the stalks $\SM\to\bigoplus_{x\in\CV}\SM^x$ is injective. Hence the composition $\SM\to\bigoplus_{x\in\CV}\SM^x\xrightarrow{\sim}\bigoplus_{x\in\CV} i_{x\ast}i_x^\ast\SM^x$ is injective. Hence $\SM\cong\SM^+$. 
\end{proof}
Let us denote by $\gamma\colon\SM\to\SM^+$ the induced morphism as well. The following is a universal property for $\gamma$. 
\begin{proposition} Let $f\colon \SM\to\SN$ be a morphism of flabby, root torsion free presheaves and suppose that $\SN$ satisfies the support condition. Then there is a unique morphism $g\colon \SM^{+}\to\SN$ such that $f=g\circ \gamma$. 
\end{proposition}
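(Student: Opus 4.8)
The plan is to exploit the fact, established in Lemma \ref{lemma-inj}, that $\SN$ satisfying the support condition means the canonical morphism $\gamma_\SN\colon\SN\to\SN^+$ is an isomorphism; equivalently the direct sum $\SN\to\bigoplus_{x\in\CV}i_{x\ast}i_x^\ast\SN^x$ is injective. So I would first pass to stalks: the morphism $f$ induces, for each $x\in\CV$, a morphism $f^x\colon\SM^x\to\SN^x$ by functoriality of $(\cdot)^x$, and hence a morphism $i_{x\ast}i_x^\ast f^x\colon i_{x\ast}i_x^\ast\SM^x\to i_{x\ast}i_x^\ast\SN^x$. Taking the direct sum over $x$ and postcomposing with $\gamma_\SN^{-1}$ (which exists since $\SN$ satisfies the support condition), I obtain a candidate morphism
$$
g\colon\SM^+\hookrightarrow\bigoplus_{x\in\CV}i_{x\ast}i_x^\ast\SM^x\xrightarrow{\bigoplus i_{x\ast}i_x^\ast f^x}\bigoplus_{x\in\CV}i_{x\ast}i_x^\ast\SN^x\xleftarrow[\sim]{\gamma_\SN}\SN,
$$
restricted to the subpresheaf $\SM^+$ of the left-hand direct sum.

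Next I would check that $g$ so defined actually lands in the image of $\gamma_\SN$, so that composing with $\gamma_\SN^{-1}$ makes sense, and that $f=g\circ\gamma$. Both are diagram chases on sections over an open set $\CJ$: by construction $\gamma^x\colon\SM\to i_{x\ast}i_x^\ast\SM^x$ factors the restriction $\SM(\CJ)\to\SM((\CJ\cap x)_{\preceq_T})$ followed by the stalk projection, and the analogous statement holds for $\SN$; since $f$ is a morphism of presheaves it commutes with restrictions and with the stalk projections $p^x$, so the two squares built from $f$, $\gamma^x_\SM$, $\gamma^x_\SN$, and $i_{x\ast}i_x^\ast f^x$ commute. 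This gives that the composite $\bigoplus_x i_{x\ast}i_x^\ast f^x$ carries $\gamma_\SM(\SM)=\SM^+$ into $\gamma_\SN(\SN)$, and that $\gamma_\SN^{-1}\circ\bigl(\bigoplus_x i_{x\ast}i_x^\ast f^x\bigr)\circ\gamma_\SM=f$, i.e. $g\circ\gamma=f$.

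For uniqueness, suppose $g'\colon\SM^+\to\SN$ is another morphism with $g'\circ\gamma=f=g\circ\gamma$. Since $\SM^+$ is by definition the image of $\gamma$ in the category of presheaves, $\gamma\colon\SM\to\SM^+$ is an epimorphism of presheaves (on each open set $\CJ$, $\gamma(\CJ)\colon\SM(\CJ)\to\SM^+(\CJ)$ is surjective by definition of the image presheaf), and epimorphisms are right-cancellable, so $g=g'$.

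\textbf{Main obstacle.} I expect the only real point to check carefully is that $g$ genuinely takes values in $\gamma_\SN(\SN)\subset\bigoplus_x i_{x\ast}i_x^\ast\SN^x$ rather than merely in the ambient direct sum — i.e. that the image under $\bigoplus_x i_{x\ast}i_x^\ast f^x$ of a section of $\SM^+$ is again ``coherent'' in the sense of coming from a section of $\SN$. This follows formally from the commutativity $\bigl(\bigoplus_x i_{x\ast}i_x^\ast f^x\bigr)\circ\gamma_\SM=\gamma_\SN\circ f$ of presheaf morphisms, but writing it out over an arbitrary open $\CJ$ and invoking that $\gamma$ is surjective onto $\SM^+$ (so every section of $\SM^+(\CJ)$ is $\gamma_\SM(\CJ)(m)$ for some $m\in\SM(\CJ)$) is the step that needs the support condition on $\SN$ in an essential way. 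Everything else is bookkeeping with the functors $(\cdot)^x$ and $i_{x\ast}i_x^\ast$, which commute with each other and are compatible with restriction maps.
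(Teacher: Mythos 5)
Your proof is correct and follows essentially the same route as the paper: both rest on the commuting square $\gamma^\SN\circ f=\bigl(\bigoplus_x i_{x\ast}i_x^\ast f^x\bigr)\circ\gamma^\SM$ together with the fact, from Lemma~\ref{lemma-inj}, that the support condition on $\SN$ makes $\gamma^\SN$ injective, plus the surjectivity of $\gamma^\SM$ onto $\SM^+$ for uniqueness. The paper simply phrases the construction as a factorization ($\ker\gamma^\SM\subseteq\ker f$, hence $f$ factors through $\mathrm{im}\,\gamma^\SM=\SM^+$) rather than building $g$ explicitly and then checking it lands in $\gamma^\SN(\SN)$; the content is the same. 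One small misattribution in your closing remark: the verification that the image lands in $\gamma^\SN(\SN)$ is purely formal from the commuting square and the surjectivity of $\gamma^\SM$ and does not itself use the support condition on $\SN$ --- that hypothesis is used only to know $\gamma^\SN$ is injective so that $\gamma^{\SN\,-1}$ exists on its image.
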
 

\begin{proof}  Note that $f$ induces a morphism $\bigoplus_{x\in\CV} i_{x\ast}i_x^\ast\SM^x\to  \bigoplus_{x\in\CV} i_{x\ast}i_x^\ast\SN^x$ such that the diagram

\centerline{
\xymatrix{
 \SM\ar[d]_{\gamma^\SM}\ar[r]^f&\SN\ar[d]^{\gamma^\SN}\\
 \bigoplus_{x\in\CV} i_{x\ast}i_x^\ast\SM^x\ar[r]&  \bigoplus_{x\in\CV} i_{x\ast}i_x^\ast\SN^x
}
}
\noindent 
commutes. By Lemma \ref{lemma-inj}, the morphism $\gamma^\SN$ is injective. Hence the kernel of $\gamma^\SM$ is contained in the kernel of $f$, so $f$ factors over the image of $\gamma^\SM$. \end{proof}


\subsection{Rigidity}
Recall the definition of a $T$-admissible family in Definition \ref{def-Tadm}. 
We now show that a morphism in $\bP$ is already determined by its restriction to  elements in a $T$-admissible family.
\begin{proposition}\label{prop-rig} Suppose that $\DT$ is a $T$-admissible family. Let  $\SM$ and $\SN$ be objects in $\bP$.  Suppose we are given  a homomorphism $f^{(\CJ)}\colon \SM {(\CJ)}\to \SN {(\CJ)}$ of $\CZ$-modules for each $\CJ\in\DT$ in such a way that for any inclusion $\CJ^\prime\subset\CJ$ of sets in $\DT$   the diagram

\centerline{
\xymatrix{
\SM {(\CJ)}\ar[rr]^{f^{(\CJ)}}\ar[d]_{r_\CJ^{\CJ^\prime}}&&\SN {(\CJ)}\ar[d]^{r_{\CJ}^{\CJ^\prime}}\\
\SM {(\CJ^\prime)}\ar[rr]^{f^{(\CJ^\prime)}}&&\SN {(\CJ^\prime)}
}
}
\noindent 
commutes. Then there is a unique morphism $f\colon \SM \to \SN $ of presheaves such that $f{(\CJ)}=f^{(\CJ)}$ for all $\CJ\in\DT$. 
\end{proposition}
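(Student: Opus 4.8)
The plan is to reconstruct the morphism $f$ from the given data by extending first to arbitrary $T$-open sets and then checking that the extension is independent of choices and compatible with all restrictions. The key point that makes this work is property (4) of a $T$-admissible family, together with the fact that objects of $\bP$ satisfy the support condition, which by Lemma \ref{lemma-suppcond} controls the kernels of restriction maps in terms of $\DZ R$-orbits.

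First I would treat the ``local'' picture. Fix a $\DZ R$-orbit $x$ in $\CA$. For an arbitrary $T$-open set $\CJ$, property (4) gives some $\CJ^\prime\in\DT$ with $\CJ\cap x=\CJ^\prime\cap x$. Since $\SM$ and $\SN$ satisfy the support condition, the $x$-stalks $\SM(\CJ)^x\to\SM((\CJ\cap x)_{\preceq_T})^x$ and similarly for $\SN$ are isomorphisms, and the same holds with $\CJ$ replaced by $\CJ^\prime$. Hence $\SM(\CJ)^x$, $\SM(\CJ^\prime)^x$ and $\SM((\CJ\cap x)_{\preceq_T})^x$ are all canonically identified, and the given map $f^{(\CJ^\prime)}$ induces a well-defined homomorphism $f^x_{\CJ}\colon\SM(\CJ)^x\to\SN(\CJ)^x$. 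Using the commutativity of the given diagrams for inclusions inside $\DT$, together with the fact that any two choices $\CJ^\prime_1,\CJ^\prime_2\in\DT$ with $\CJ\cap x=\CJ^\prime_i\cap x$ have a common refinement $\CJ^\prime_1\cap\CJ^\prime_2\in\DT$ (property (3)) that still has the same intersection with $x$, one checks $f^x_{\CJ}$ is independent of the choice of $\CJ^\prime$, and that these stalk maps are compatible with all restrictions between $T$-open sets.

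Next I would assemble the global map. For an arbitrary $T$-open $\CJ$, recall from Lemma \ref{lemma-stalksZmod} (applied to the root torsion free modules of local sections) that $\SM(\CJ)\hookrightarrow\bigoplus_{x\in\CV}\SM(\CJ)^x$ and likewise for $\SN(\CJ)$; moreover the image of $\SN(\CJ)$ is precisely characterized among tuples by a $\CZ$-support/coherence condition on finitely many coordinates, since $\SN$ is finitary. I would define $f^{(\CJ)}\colon\SM(\CJ)\to\bigoplus_x\SN(\CJ)^x$ as the direct sum of the $f^x_{\CJ}$ precomposed with the projections, and show its image lies in $\SN(\CJ)$: on each $\CJ^\prime\in\DT$ this is the original $f^{(\CJ^\prime)}$ by construction, and for general $\CJ$ one reduces to the case $\CJ\in\DT$ by covering the relevant (finite) support of $\SM(\CJ)$ using property (4) orbit by orbit and intersecting the resulting members of $\DT$ via property (3). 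Compatibility with restrictions between arbitrary $T$-open sets follows from the stalkwise compatibility already established, and uniqueness follows because any morphism agreeing with $f^{(\CJ)}$ on $\DT$ must, by the same stalk argument, agree on every $T$-open set (the $x$-stalk over any $T$-open set is pinned down by a member of $\DT$).

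I expect the main obstacle to be the bookkeeping in showing that the candidate map $f^{(\CJ)}$ really lands in $\SN(\CJ)$ rather than just in the ambient product $\bigoplus_x\SN(\CJ)^x$ — i.e. that the tuple $(f^x_\CJ(m_x))_x$ satisfies the congruences/coherence cutting out $\SN(\CJ)$. The clean way around this is to avoid describing $\SN(\CJ)$ explicitly and instead argue: write $\CJ$ as a union of sets in $\DT$ is not available, but one can cover the finite set $\CT$ on which $\SM$ (equivalently $\SN$) is supported, intersect it with $\CJ$, and use that over each orbit $x$ met by $\CJ\cap\CT$ there is $\CJ^\prime_x\in\DT$ with $\CJ\cap x=\CJ^\prime_x\cap x$; then $\CJ^{\prime\prime}:=\bigcap_x\CJ^\prime_x\in\DT$ satisfies $\CJ\cap\CT=\CJ^{\prime\prime}\cap\CT$, so by the support condition the restriction maps $\SM(\CJ)\to\SM(\CJ^{\prime\prime})$ and $\SN(\CJ)\to\SN(\CJ^{\prime\prime})$ become isomorphisms after passing to the relevant stalks — in fact, combined with flabbiness and Lemma \ref{lemma-suppcond}, one gets that $\SM(\CJ)$ and $\SM(\CJ^{\prime\prime})$ have the same image in $\bigoplus_{x}\SM(-)^x$ up to canonical identification, and similarly for $\SN$. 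Under this identification $f^{(\CJ)}$ is literally $f^{(\CJ^{\prime\prime})}$, which lands in $\SN(\CJ^{\prime\prime})\cong\SN(\CJ)$. This reduces everything to the defining data on $\DT$ and finishes the construction.
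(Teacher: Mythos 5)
Your stalk-level construction of $f^x_\CJ$ via property (4), together with the independence-of-choice argument using property (3) and the support condition, is exactly what the paper does, and that part of your argument is fine. Where you diverge — and run into trouble — is in the assembly step.

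The paper does not define $f(\CJ)$ as the direct sum of the stalk maps and then try to show the resulting tuple lies in the image of $\SN(\CJ)\hookrightarrow\bigoplus_{x\in\CV}\SN^x(\CJ)$. Instead it uses flabbiness directly: since $\CA\in\DT$ and $\SM(\CA)\to\SM(\CJ)$ is surjective, one lifts $m\in\SM(\CJ)$ to $\tilde m\in\SM(\CA)$, applies the given $f^{(\CA)}$, and restricts back to $\SN(\CJ)$. Well-definedness follows because two lifts differ by an element of $\ker(\SM(\CA)\to\SM(\CJ))$, hence go to $0$ in $\bigoplus_x\SM^x(\CJ)$; the stalk maps then carry this difference to $0$ in $\bigoplus_x\SN^x(\CJ)$, which contains $\SN(\CJ)$ by root-torsion-freeness. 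This automatically produces a value in $\SN(\CJ)$, bypassing the ``image containment'' bookkeeping entirely.

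Your proposed workaround for that bookkeeping has a genuine gap. You set $\CJ''=\bigcap_x\CJ'_x$ (intersection over the finitely many orbits $x$ meeting $\CJ\cap\CT$) and assert $\CJ\cap\CT=\CJ''\cap\CT$. Only one inclusion is clear: $\CJ''\cap y\subseteq\CJ'_y\cap y=\CJ\cap y$. For the reverse you would need $\CJ\cap y\subseteq\CJ'_x$ for every other orbit $x$ in the intersection, but property (4) only controls $\CJ'_x\cap x$, saying nothing about $\CJ'_x\cap y$ for $y\ne x$. So this identity need not hold, and the reduction to a member of $\DT$ does not go through as stated. Replacing that paragraph with the paper's lift-to-$\CA$ argument (via the commutative square $\SM(\CA)\to\SM(\CJ)\to\bigoplus_x\SM^x(\CJ)$ and its counterpart for $\SN$) closes the gap and requires nothing beyond what you have already established.
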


\begin{proof} Note that the uniqueness follows immediately from the flabbiness of $\SM$ together with the fact that $\CA\in\DT$.  It remains to show the existence.  In a first step we construct a morphism $f^x\colon\SM^{ x}\to\SN^{ x}$ for any $x\in\CV$. For $\CJ$ open in $\CA_T$ let $\tCJ\in\DT$ be such that  $\CJ\cap x=\tCJ\cap x$. Then $\SM^{ x}(\CJ)=\SM((\CJ\cap x)_{\preceq_T})^x=\SM((\tCJ\cap x)_{\preceq_T})^x=\SM^x(\tCJ)$ and, similarly,  $\SN^{ x}(\CJ)\cong\SN^x(\tCJ)$. So  there is a unique homomorphism  $f^{ x}(\CJ)\colon \SM^{ x}{(\CJ)}\to\SN^{ x}(\CJ)$ induced by $(f^{(\tCJ)})^x$ using these identifications. If $\tCJ^\prime\subset\tCJ$ is another element in $\DT$ that satisfies $\CJ\cap x=\tCJ^\prime\cap x$, then the restriction homomorphisms induce isomorphisms $\SM(\tCJ)^x\xrightarrow{\sim}\SM(\tCJ^\prime)^x$ and  $\SN(\tCJ)^x\xrightarrow{\sim}\SN(\tCJ^\prime)^x$ by Lemma \ref{lemma-suppcond}, and $(f^{(\tCJ)})^x\cong (f^{(\tCJ^\prime)})^x$ under these identifications. The homomorphism $f^x(\CJ)$ constructed above is hence independent of the choice of $\tCJ$. Clearly we have now constructed  a morphism $f^{ x}\colon\SM^{ x}\to\SN^{ x}$ of presheaves on $\CA_T$.

Again let $\CJ$ be an arbitrary open subset of $\CA_T$. Consider the following commutative diagram:

\centerline{
\xymatrix{
\SM(\CA)\ar[d]^{f^{(\CA)}}\ar[r]^{r_\CA^\CJ}&\SM(\CJ)\ar[r]&\bigoplus_{x\in\CV}\SM^{ x}(\CJ)\ar[d]^{f^{ x}(\CJ)}\\
\SN(\CA)\ar[r]^{r_\CA^\CJ}&\SN(\CJ)\ar[r]&\bigoplus_{x\in\CV}\SN^{ x}(\CJ).\\
}
}
\noindent
Note that the two horizontal maps on the right are injective as $\SM$ and $\SN$ are root torsion free. As $\SM$ is flabby, the upper left horizontal map is surjective. It follows that there is a unique homomorphism $f(\CJ)\colon\SM(\CJ)\to\SN(\CJ)$ completing the diagram. The uniqueness statement also ensures that all  homomorphisms $f(\CJ)$ thus constructed yield a morphism $f\colon \SM\to\SN$ of presheaves. If $\CJ$ is in $\DT$ and if we put $f^{(\CJ)}$ in the middle of the above diagram, then it commutes by the construction of the $f^{ x}$. Hence  $f(\CJ)=f^{(\CJ)}$.  
\end{proof}

\subsection{Base change}
Suppose that $T\to  T^\prime$ is a flat  homomorphism of  base rings. In this section we want to construct a base change functor $\cdot\boxtimes_TT^\prime$ from $\bP_T$ to $\bP_{T^\prime}$. Let $\SM$ be an object in $\bP_T$. As a first step define $\SM\otimes_TT^\prime$ as the presheaf of $\CZ_{T^\prime}$-modules on $\CA_T$ with sections $(\SM\otimes_TT^\prime)(\CJ)=\SM(\CJ)\otimes_TT^\prime$ for any $T$-open subset $\CJ$ and  with the obvious restriction homomorphisms. 
Denote by $i\colon\CA_{T^\prime}\to\CA_T$ the (continuous) identity map.  Again we denote by $i^{\ast}$ the presheaf-theoretic pullback functor. Note that it naturally commutes with base change functors. But  in general the functor $i^\ast(\cdot)\otimes_TT^\prime$ maps an object in $\bP_T$ only to a presheaf of $\CZ_{T^\prime}$-modules on $\CA_{T^\prime}$, not necessarily to an object in $\bP_{T^\prime}$. However, every object in the image clearly is finitary, flabby and root torsion free. Hence the only missing property is the support condition. The following statement defines and characterizes  the base change functor. 

\begin{proposition}\label{prop-defbox} 
There is an up to isomorphism unique pair ($\cdot\boxtimes_TT^\prime$,  $\tau$), where $\cdot\boxtimes_TT^\prime\colon \bP_T\to\bP_{T^\prime}$ is a functor and $\tau\colon i^\ast(\cdot)\otimes_TT^\prime\to(\cdot)\boxtimes_TT^\prime$ is a natural transformation with the following property. For any $T^\prime$-open subset $\CJ$ that is also $T$-open $\tau$ induces an isomorphism   $i^\ast\SM\otimes_TT^\prime(\CJ)\xrightarrow{\sim}\SM\boxtimes_TT^\prime(\CJ)$.
\end{proposition}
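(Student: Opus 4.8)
The plan is to construct $\SM\boxtimes_TT'$ by applying the functor $(\cdot)^+$ from the previous subsection to $i^\ast\SM\otimes_TT'$. That is, I would set
$$
\SM\boxtimes_TT':=\bigl(i^\ast(\SM\otimes_TT')\bigr)^+,
$$
and let $\tau\colon i^\ast(\cdot)\otimes_TT'\to(\cdot)\boxtimes_TT'$ be the natural morphism $\gamma$ produced by the $(\cdot)^+$-construction. First I would check that this is well defined as a functor $\bP_T\to\bP_{T'}$: the presheaf $i^\ast(\SM\otimes_TT')$ is a presheaf of $\CZ_{T'}$-modules on $\CA_{T'}$ which is finitary, flabby and root torsion free (extension of scalars along the flat map $T\to T'$ preserves root torsion freeness, and the presheaf-theoretic pullback along the continuous identity $i$ preserves flabbiness and the finitary property), so Lemma \ref{lemma-inj}(1) applies and $\bigl(i^\ast(\SM\otimes_TT')\bigr)^+$ satisfies the support condition; it remains finitary, flabby and root torsion free, hence lies in $\bP_{T'}$. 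Functoriality in $\SM$ is immediate since both $i^\ast(\cdot)\otimes_TT'$ and $(\cdot)^+$ are functors and $\gamma$ is natural.

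Next I would verify the characterizing property. Let $\CJ$ be a $T'$-open subset that is also $T$-open. By the explicit description of $(\cdot)^+$, the sections $\SM\boxtimes_TT'(\CJ)$ are the image of $i^\ast(\SM\otimes_TT')(\CJ)=\SM(\CJ)\otimes_TT'$ inside $\bigoplus_{x\in\CV}\bigl(\SM\otimes_TT'\bigr)\bigl((\CJ\cap x)_{\preceq_{T'}}\bigr)^x$, the smallest $T'$-open set containing $\CJ\cap x$ being formed with respect to $\preceq_{T'}$. So the claim is that the map
$$
\SM(\CJ)\otimes_TT'\longrightarrow\bigoplus_{x\in\CV}\bigl(\SM((\CJ\cap x)_{\preceq_{T'}})\otimes_TT'\bigr)^x
$$
is injective. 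Here is where the support condition on $\SM$ enters: since $\SM\in\bP_T$, Lemma \ref{lemma-inj}(2) (or directly Lemma \ref{lemma-stalksZmod}) gives that $\SM(\CJ)\to\bigoplus_{x\in\CV}\SM((\CJ\cap x)_{\preceq_T})^x$ is injective, and because $\CJ$ is $T$-open one has $(\CJ\cap x)_{\preceq_T}\subseteq(\CJ\cap x)_{\preceq_{T'}}$ together with compatibility of the stalk functor with extension of scalars; combining this with the flatness of $T\to T'$ (so that $-\otimes_TT'$ preserves injections) yields the required injectivity. Hence $\tau$ restricts to an isomorphism $i^\ast\SM\otimes_TT'(\CJ)\xrightarrow{\ \sim\ }\SM\boxtimes_TT'(\CJ)$ on such $\CJ$. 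The main subtlety here is bookkeeping the two different topologies $\CA_T$ and $\CA_{T'}$ and making sure the $(\cdot)^+$-closure on $\CA_{T'}$ does not change the sections over sets that are open in both topologies.

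Finally I would prove uniqueness up to isomorphism. Suppose $(F,\sigma)$ is another such pair. By Lemma \ref{lemma-Top} the family $\DT$ of $T$-open subsets of $\CA$ is $T'$-admissible, and by hypothesis $\sigma$ and $\tau$ both induce isomorphisms on sections over every $\CJ\in\DT$; thus for each $\SM$ the morphisms $\sigma_\SM$ and $\tau_\SM$ identify the restrictions of $F(\SM)$ and $\SM\boxtimes_TT'$ to $\DT$, compatibly with restriction maps. Since both $F(\SM)$ and $\SM\boxtimes_TT'$ lie in $\bP_{T'}$, the rigidity statement Proposition \ref{prop-rig} applies with the $T'$-admissible family $\DT$: the isomorphism $\tau_\SM\circ\sigma_\SM^{-1}$ on $\DT$-sections extends uniquely to an isomorphism of presheaves $F(\SM)\xrightarrow{\sim}\SM\boxtimes_TT'$, and the uniqueness clause in Proposition \ref{prop-rig} together with naturality of $\sigma$ and $\tau$ forces these isomorphisms to assemble into a natural isomorphism of functors compatible with the transformations. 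This gives the asserted uniqueness and completes the construction.
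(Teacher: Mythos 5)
Your proposal is essentially the paper's own proof: define $\SM\boxtimes_TT':=\bigl(i^\ast(\SM\otimes_TT')\bigr)^+$ with $\tau$ the natural transformation $\gamma$, verify membership in $\bP_{T'}$ via Lemma~\ref{lemma-inj}, obtain the characterizing property from flatness of $T\to T'$ together with the $T$-support condition on $\SM$, and deduce uniqueness from Lemma~\ref{lemma-Top} (the $T$-open sets form a $T'$-admissible family) and the rigidity Proposition~\ref{prop-rig}. One small slip in the verification step: you assert $(\CJ\cap x)_{\preceq_T}\subseteq(\CJ\cap x)_{\preceq_{T'}}$, but the containment runs the other way, since by Remark~\ref{rem-ordorb} every $T$-open set is $T'$-open, so there are \emph{more} $T'$-open sets and the $T'$-closure is the smaller one; the load-bearing fact is rather that the $T$-closure of $(\CJ\cap x)_{\preceq_{T'}}$ equals $(\CJ\cap x)_{\preceq_T}$, so that $i^\ast(\SM\otimes_TT')$ evaluated on $(\CJ\cap x)_{\preceq_{T'}}$ is $\SM((\CJ\cap x)_{\preceq_T})\otimes_TT'$, after which the injectivity argument you give goes through unchanged.
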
 

\begin{proof} As the set of $T$-open subsets is $T^\prime$-admissible by Lemma \ref{lemma-Top}, the uniqueness follows from Proposition \ref{prop-rig}. We need to show the existence. For an object $\SM$ in $\bP_T$ set
$$
\SM\boxtimes_TT^\prime:=(i^\ast(\SM\otimes_TT^{\prime}))^{+}.
$$
More explicitely, if $\CJ$  is $T^\prime$-open, then $(\SM\boxtimes_TT^\prime)(\CJ)$ is the image of $\SM(\CJ_{\preceq_T})\otimes_TT^\prime$ in $\bigoplus_{x\in\CV}\SM((\CJ_{\preceq_T}\cap x)_{\preceq_T})^{x}\otimes_TT^\prime$. The natural transformation $\tau\colon i^\ast(\SM)\otimes_TT^\prime=i^{\ast}(\SM\otimes_TT^\prime)\to \SM\boxtimes_TT^\prime$ is  induced by the natural transformation  $\gamma\colon \id\to(\cdot)^+$  defined earlier. We already observed that $\SM\boxtimes_TT^{\prime}$ is flabby, finitary and root torsion free. As it satisfies the support condition by construction, it is an object in $\bP_{T^\prime}$. It remains to show that the pair $(\cdot\boxtimes_TT^{\prime},\tau)$  has the required property.

Suppose that $\CJ$ is $T$-open. Then $i^{\ast}\SM(\CJ)=\SM(\CJ)$. As $\SM$ satisfies the support condition, $\SM(\CJ)\to\bigoplus_{x\in\CV}\SM((\CJ\cap x)_{\preceq_T})^x$ is injective. As $T\to T^\prime$ is flat, the induced homomorphism  $\SM(\CJ)\otimes_TT^\prime\to \bigoplus_{x\in\CV}\SM((\CJ\cap x)_{\preceq_T})^x\otimes_TT^\prime$ is injective. Hence $\SM(\CJ)\otimes_TT^\prime$ is isomorphic to the image of this homomorphism, which, by definition, is $(\SM\boxtimes_TT^\prime)(\CJ)$. (Note that the extension of scalars functor $\otimes_TT^{\prime}$ commutes with $i^\ast$ and with $(\cdot)^+$.)
\end{proof}

\begin{remarks}\label{rem-propbox} \begin{enumerate}
\item For flat homomorphisms $T\to T^\prime$ and $T^\prime\to T^{\prime\prime}$ it follows from the defining property that we have an isomorphism $(\SM\boxtimes_TT^\prime)\boxtimes_{T^\prime}T^{\prime\prime}\cong \SM\boxtimes_TT^{\prime\prime}$.
\item Suppose that $T\to T^\prime$ is a flat homomorphism of base rings with the property that the identity $\CA_{T^\prime}\to\CA_T$ is a homeomorphism of topological spaces. Then  $\SM\boxtimes_TT^\prime\cong\SM\otimes_TT^\prime$.
\end{enumerate}
\end{remarks}

 If $\SM$ is a sheaf on $\CA_T$ and $T\to T^\prime$ is a flat homomorphism of base rings, then $\SM\boxtimes_TT^\prime$ is not necessarily a sheaf on $\CA_{T^\prime}$. This leads us to the following definition. 
\begin{definition}\label{def-C} Denote by $\bS=\bS_T$ the full subcategory of the category $\bP_T$ that contains all objects $\SM$ that satisfy the following properties: \begin{enumerate} \item $\SM$ is a root reflexive sheaf on $\CA_T$. \item For all flat homomorphisms $T\to T^\prime$ of base rings the presheaf $\SM\boxtimes_TT^\prime$ is a root reflexive sheaf on $\CA_{T^\prime}$. \end{enumerate}
\end{definition}

 \section{Reflections on alcove walls}  As a preparation for the wall crossing functors, we return in this section  to basic alcove geometry. We consider reflections at alcove walls (i.e. the right action of simple  reflections in the affine Weyl group).  The first result is about the partial order and it is well known.  We use it to deduce some topological statements. Then we discuss subsets that are invariant under alcove wall reflections on the set level and also invariants in the structure algebra. We finish this section with  twisting functors for certain presheaves
that are associated with wall reflections. 
 
  \subsection{Reflections at alcove walls} Fix a base ring $T$.  
Denote by $\hCS$ the set of reflections in $\hCW$ at hyperplanes that have a codimension $1$ intersection with the closure of the fundamental alcove  $A_e$. The elements in $\hCS$ are called the {\em simple affine reflections}. Fix $s\in\hCS$. Recall the right action $A\mapsto As$ that we defined in Section \ref{subsec-conncomp}. 
 For any connected component $\Lambda$ of $\CA_T$ the set $\Lambda s$ is a connected component again by Lemma \ref{lemma-comp}.  

\begin{lemma}\label{lemma-tops} Let $\Lambda$ be a connected component of $\CA_T$. 
\begin{enumerate}
\item Suppose that  $\Lambda=\Lambda s$ and let $A\in \Lambda$. Then $A,As$ are
$\preceq_{T}$-comparable and $\{A,As\}$ is a
$\preceq_{T}$-interval. Moreover, for all $B\in\Lambda$ the following holds.
\begin{enumerate}
\item If $As\preceq_{T} A$ and $B\preceq_{T}A$ then
$Bs\preceq_{T}A$,
\item If $As\preceq_{T} A$ and $As\preceq_{T}B$ then
$As\preceq_{T}Bs$.
\end{enumerate}
\item Suppose that $\Lambda\ne\Lambda s$. Then the map $\Lambda\to\Lambda s$, $A\mapsto As$, is a homeomorphism of topological spaces.
\end{enumerate}
\end{lemma}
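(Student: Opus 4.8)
The two parts of the lemma correspond to the two qualitatively different cases for the right multiplication by a simple affine reflection $s$. I would deal first with part~(1), which is a standard fact about the generic Bruhat order lifted from the literature (it is essentially the statement that $\{A,As\}$ behaves like a Bruhat cell pair); the bulk of the work is to package this correctly, and then part~(2) is the genuinely new topological observation. The strategy throughout is to reduce everything to Remark~\ref{rem-ordexp}, i.e.\ to the explicit generators of $\preceq_T$ (translations by positive roots and reflections $s_{\alpha,n}$ with $\alpha\in R_T^+$ moving an alcove out of the negative half-space), together with the interplay between the left $\hCW$-action and the right $\hCW$-action recorded in Section~\ref{subsec-conncomp}.

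For part~(1): since $\Lambda=\Lambda s$, the component is stable under the right action of $s$, so $As\in\Lambda$ for all $A\in\Lambda$. The comparability of $A$ and $As$ and the fact that $\{A,As\}$ is a $\preceq_T$-interval I would derive from the classical description of the affine Bruhat order by walls: $A$ and $As$ differ by a single wall reflection, so one lies in the positive and the other in the negative half-space of the corresponding affine hyperplane $H_{\alpha,n}$, and (using that $\Lambda$ is $s$-stable, so that $\alpha\in R_T^+$) this is exactly an instance of a generating relation of $\preceq_T$; that no alcove $C$ of $\Lambda$ satisfies $A\prec_T C\prec_T As$ when $As\preceq_T A$ is the interval assertion, which again is the known Bruhat-order fact. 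For the two propagation statements (a) and (b), I would argue via Lemma~\ref{lemma-easypeasy}: the right action by $s$ is intertwined with conjugation by a fixed element of $\hCW$, so the relations $Bs\preceq_T A$ and $As\preceq_T Bs$ are translates of $B\preceq_T A$, $As\preceq_T B$ under operations that preserve $\preceq_T$; one just has to track that applying a reflection at the wall fixed by $s$ to a chain witnessing $B\preceq_T A$ produces a valid chain, using the hypothesis $As\preceq_T A$ to control the sign/direction.

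For part~(2): when $\Lambda\ne\Lambda s$, the map $A\mapsto As$ is a bijection $\Lambda\to\Lambda s$ (it is the restriction of a bijection of $\CA$ with inverse $A\mapsto As$, as $s^2=\id$). Continuity in both directions amounts to showing that $A\preceq_T B$ in $\Lambda$ if and only if $As\preceq_T Bs$ in $\Lambda s$. Here I would use that the order $\preceq_T$ restricted to a single $\DZ R$-orbit is, by Lemma~\ref{lemma-gentop}, just the translation order by $\DZ_{\ge0}R^+$ and in particular independent of $T$ and of the right action; combined with the fact that $\Lambda$ and $\Lambda s$ are distinct $\hCW_T$-orbits, any generating relation $A\preceq_T B$ within $\Lambda$ is either a translation (which commutes with the right $s$-action up to the barycenter computation already used in Lemma~\ref{lemma-gentop}) or a reflection $s_{\alpha,n}$ with $\alpha\in R_T^+$; in the latter case I must check that conjugating the left reflection through the right action of $s$ again lands in $R_T^+$ and preserves the half-space condition, which is where the hypothesis $\Lambda\ne\Lambda s$ is used (it guarantees $s\notin\hCW_T$ acting on the left, so the two actions genuinely commute on the relevant orbits). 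Running this equivalence in both directions gives that both $A\mapsto As$ and its inverse are continuous, hence a homeomorphism.

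The main obstacle I anticipate is bookkeeping in part~(1)(a)--(b) and in the reflection case of part~(2): one has to be careful that replacing a generating step $A_i\preceq_T A_{i+1}$ of a chain by its image under (conjugation by) the right action of $s$ still yields a \emph{generating} step of $\preceq_T$ — i.e.\ that the reflecting root stays in $R_T^+$ and that the negative-half-space condition $A_i\subset H^-_{\alpha,n}$ is preserved or correctly flipped. The sign hypotheses ($As\preceq_T A$ in part~(1), $\Lambda\ne\Lambda s$ in part~(2)) are exactly what make this go through, and verifying them is the only non-formal point; everything else is an unwinding of definitions and of the already-established lemmas.
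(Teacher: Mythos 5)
Your overall plan — reduce to the generating relations of $\preceq_T$ via Remark~\ref{rem-ordexp} and split by whether $\Lambda=\Lambda s$ or not — is the same as the paper's, and for the comparability and interval claims in part~(1) you correctly point toward the classical Bruhat-order fact (the paper cites \cite{LusAdv} for $\preceq_S$ and then transfers to $\preceq_T$ using that $C\preceq_T D$ implies $C\preceq_S D$). But the sketch has real gaps at exactly the two places where the lemma has content.

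For (1)(a)--(b) your idea is to ``apply the right $s$-action to a chain witnessing $B\preceq_T A$.'' This does not work directly. If a step of the chain is $B_{i+1}=s_{\alpha,n}(B_i)$ with $B_i\subset H^-_{\alpha,n}$, then indeed $B_{i+1}s=s_{\alpha,n}(B_is)$, but there is no reason that $B_is\subset H^-_{\alpha,n}$: the right $s$-action can push $B_is$ across $H_{\alpha,n}$, and then $B_is\preceq_T B_{i+1}s$ is simply false. That is the obstacle, and the hypothesis $As\preceq_T A$ does not remove it at the level of individual chain steps. The paper instead runs an induction on the chain length, and at the base case the interval property of $\{A,As\}$ (for $\preceq_S$, then transferred to $\preceq_T$) is used as an essential tool to rule out the bad sign: if $A=s_{\alpha,n}(B)$ and $As\preceq_T Bs$, then $As\preceq_S Bs\preceq_S A$ forces $Bs\in\{A,As\}$ and hence a contradiction. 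Your proposal does not mention the induction or the role of the interval property here; the phrase ``under operations that preserve $\preceq_T$'' glosses over precisely the failure mode above. Also, the description of the right $s$-action as ``conjugation by a fixed element of $\hCW$'' is incorrect: $A_{ws}=(wsw^{-1})(A_w)$, so the conjugating reflection varies with the alcove; this is not a fixed left translation of the order relation.

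For (2), you correctly identify that the only nontrivial point is that the half-space condition is preserved, i.e.\ that $As\subset H^-_{\alpha,n}$ when $A\subset H^-_{\alpha,n}$, $A=s_{\alpha,n}(B)$. But you do not say how to verify it, and the suggested framing (``conjugating the left reflection through the right action of $s$ again lands in $R_T^+$'') is empty, since the left and right actions always commute and the root $\alpha$ is literally unchanged. The statement ``the two actions genuinely commute on the relevant orbits'' because $s\notin\hCW_T$ is likewise confused: the two actions commute unconditionally, and $s$ need not lie in $\hCW_T$ for any base ring; what $\Lambda\ne\Lambda s$ actually controls is the root labelling the common wall of $A$ and $As$. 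The argument the paper uses, and which is missing from your sketch, is geometric: if $As\not\subset H^-_{\alpha,n}$, the segment between the barycenters of $A$ and $As$ crosses $H_{\alpha,n}$; since $A$ and $As$ are adjacent alcoves, that segment crosses only their common wall, so the common wall lies in $H_{\alpha,n}$, forcing $As=s_{\alpha,n}(A)$ with $\alpha\in R_T^+$ and hence $\Lambda=\Lambda s$, a contradiction. Without this (or some replacement for it), part~(2) is not proved.
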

\begin{proof}

(1) We have $As=s_{\alpha,n}(A)$ for some $\alpha\in R^+$ and $n\in\DZ$. As $\CA$ is a principal homogeneous $\hCW$-space and as $A,As$ are contained in the same $\hCW_T$-orbit $\Lambda$, this implies $s_{\alpha,n}\in\hCW_T$, i.e. $\alpha\in R^+_T=I_T$, hence $A$ and $As$ are $\preceq_T$-comparable. Now note  that in the case $T=S$ we have $I_T=R_T^+=R^+$ and $\hCW_T=\hCW$.  The partial order $\preceq_S$ hence coincides with the partial order on $\CA$ considered in \cite{LusAdv}. Denote by $d\colon\CA\times\CA\to\DZ$ the distance function defined in \cite[Section 1.4]{LusAdv} (it is a weighted counting of the hyperplanes dividing two alcoves). Then $d(C,D)=-d(D,C)$, $d(C,Cs)=\pm 1$, $d(B,C)+d(C,D)+d(D,B)=0$ for arbitrary alcoves $B,C,D$. Moreover, $B\prec_S C$ implies $d(B,C)>0$. From these statements it easily follows that $\{A,As\}$ is a $\preceq_S$-interval. The statements (1a) and (1b)  are proven in \cite[Proposition 3.2, Corollary 3.3]{LusAdv}. Moreover, as $T$ is an $S$-algebra, $C\preceq_T D$ implies $C\preceq_S D$. As $\{A,As\}$ is a $\preceq_S$-interval, it must be a $\preceq_T$-interval as well.

(1a)  We can assume $B\ne A$ and $B\preceq_T Bs$. Note that by the above arguments, the corresponding result in \cite{LusAdv} implies  $Bs\preceq_S A$. Since $B\prec_T A$, there is a sequence 
$$
B=B_0\preceq_T B_1\preceq_T\ldots \preceq_T B_{n-1}\preceq_T B_n=A,
$$
where either $B_i=s_{\alpha_i, m_i} B_{i-1}$ for $\alpha_i\in R^+_T$ and $m_i\in\DZ$, or $B_i=B_{i-1}+\gamma$, for $\gamma\in\DZ_{\geq 0}R^+$.  We prove that $Bs\preceq_T A$ by induction on $n$. Suppose $n=1$. If $A=t_{\gamma}(B)$, then $Bs\preceq_T As$ as $t_\gamma$ preserves $\preceq_T$. Hence $Bs\preceq_T A$. If $A=s_{\alpha,m}(B)$, then $As=s_{\alpha,m}(Bs)$. Hence $As$ and $Bs$ are $\preceq_T$-comparable. If $Bs\preceq_T As$, then $Bs\preceq_T A$. If  $As\preceq_T Bs$, then $As\preceq_S Bs$. As $Bs\preceq_S A$ and $B\ne A$ and as $\{A,As\}$ is a $\preceq_S$-interval, we deduce  $B=As$, i.e. $Bs=A\preceq_T A$. 

So suppose $n>1$. If $B_1s\preceq_T B_1$, then our arguments above prove $Bs\preceq_T B_1$, hence $Bs\preceq_T A$. If $B_1\preceq_T B_1s$ the induction hypothesis yields $Bs\preceq_T B_1 s$ and $B_1s\preceq_T A$, hence $Bs\preceq_T A$.

(1b) is proven with similar arguments.

(2) As $\Lambda$ and $\Lambda s$ play symmetric roles,  it suffices to show that for $A,B\in\Lambda$ with $A\preceq_T B$ it follows that $As\preceq_T Bs$. So suppose $A\preceq_T B$. We can assume that $B=A+\gamma$ for some $\gamma\ge 0$ or $A=s_{\alpha,n}(B)$ for some $\alpha\in R^+_T$ with $A\subset H_{\alpha,n}^-$. In the first case we have $Bs=As+\gamma$, hence $As\preceq_T Bs$. In the second case, $As=s_{\alpha,n}(Bs)$, hence $As$ and $Bs$ are comparable. We need to show $As\subset H_{\alpha,n}^-$. If this were not the case, then the straight line between the barycenters of $A$ and $As$ would pass through $H_{\alpha,n}$. As $A$ and $As$ share a wall, this means that this wall would be contained in $H_{\alpha,n}$, i.e. $As=s_{\alpha,n}(A)$. So $A$ and $As$ would be in the same $\hCW_T$-orbit, as $\alpha\in R_T^+$, hence in the same connected component which contradicts the assumption $\Lambda\ne\Lambda s$. 
\end{proof}

\subsection{$s$-invariant subsets}
A subset $\CT$ of $\CA$ is called {\em $s$-invariant} if $\CT=\CT s$. For any subset $\CT$ set $\CT^\sharp=\CT\cup\CT s$ and $\CT^\flat=\CT\cap\CT s$. These are the smallest $s$-invariant subset of $\CA$ containing $\CT$ and the largest $s$-invariant subset of $\CT$, resp. 
 
\begin{lemma}\label{lemma-Js} Suppose that $\CJ$ is open in $\CA_T$. Then $\CJ^\sharp$ and $\CJ^\flat$ are open as well.
\end{lemma}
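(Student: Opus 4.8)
The plan is to reduce the statement to the two cases distinguished in Lemma~\ref{lemma-tops}, namely whether the connected components of $\CA_T$ are individually $s$-stable or get swapped in pairs. First, I would observe that both $(\cdot)^\sharp$ and $(\cdot)^\flat$ commute with disjoint unions over the connected components of $\CA_T$: since each connected component $\Lambda$ satisfies $\Lambda s \in C(\CA_T)$ by Lemma~\ref{lemma-comp}, the operations $\CJ \mapsto \CJ^\sharp$ and $\CJ \mapsto \CJ^\flat$ respect the decomposition $\CJ = \dot\bigcup_\Lambda (\CJ \cap \Lambda)$, and a subset of $\CA_T$ is open if and only if its intersection with every connected component is open. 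Moreover, $T$-openness is tested on $\DZ R$-orbits and hence on connected components, so it suffices to prove the claim separately on each $\hCW_T$-orbit, or rather on each pair $\{\Lambda, \Lambda s\}$.

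Next I would treat the case $\Lambda = \Lambda s$ using Lemma~\ref{lemma-tops}(1). Write $\CJ_\Lambda = \CJ \cap \Lambda$. For openness of $\CJ_\Lambda^\sharp = \CJ_\Lambda \cup \CJ_\Lambda s$, I must show: if $A \in \CJ_\Lambda^\sharp$ and $C \preceq_T A$ with $C \in \Lambda$, then $C \in \CJ_\Lambda^\sharp$. The case $A \in \CJ_\Lambda$ with $C \in \CJ_\Lambda$ is just openness of $\CJ$. The remaining cases are handled using parts (1a) and (1b) of Lemma~\ref{lemma-tops}: for instance, if $A \in \CJ_\Lambda s$, i.e. $As \in \CJ_\Lambda$, and $C \preceq_T A$, one wants $C \in \CJ_\Lambda$ or $Cs \in \CJ_\Lambda$. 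Depending on whether $As \preceq_T A$ or $A \preceq_T As$ (they are $\preceq_T$-comparable by Lemma~\ref{lemma-tops}(1)) and similarly for $C, Cs$, one chains the inequalities via (1a)/(1b) together with the fact that $\{A, As\}$ is a $\preceq_T$-interval to land the predecessor $C$ (or $Cs$) inside $\CJ_\Lambda$. For $\CJ_\Lambda^\flat = \CJ_\Lambda \cap \CJ_\Lambda s$, openness is even easier: if $A \in \CJ_\Lambda \cap \CJ_\Lambda s$ and $C \preceq_T A$ with $C \in \Lambda$, then $C \in \CJ_\Lambda$ by openness of $\CJ$; and from $As \preceq_T A$ or $A \preceq_T As$ plus Lemma~\ref{lemma-tops}(1a)/(1b) one also gets $Cs \preceq_T A$, hence $Cs \in \CJ_\Lambda$, so $C \in \CJ_\Lambda s$. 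One must be slightly careful to cover the subcase $C \preceq_T Cs$ versus $Cs \preceq_T C$ in each argument, but in every configuration the relevant element lies below $A$ inside $\Lambda$.

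Finally, the case $\Lambda \neq \Lambda s$ is immediate from Lemma~\ref{lemma-tops}(2): the map $A \mapsto As$ is a homeomorphism $\Lambda \to \Lambda s$, so $\CJ_\Lambda s$ is open in $\Lambda s$; since $\Lambda$ and $\Lambda s$ are distinct connected components, $\CJ_\Lambda^\sharp = \CJ_\Lambda \cup \CJ_\Lambda s$ is open as a union of opens in disjoint components, and $\CJ_\Lambda^\flat = \CJ_\Lambda \cap \CJ_\Lambda s = \emptyset$ is trivially open. Assembling over all orbit-pairs gives that $\CJ^\sharp$ and $\CJ^\flat$ are $T$-open.

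I expect the main obstacle to be the bookkeeping in the $\Lambda = \Lambda s$ case: one has to enumerate, for a predecessor $C \preceq_T A$, all four sign patterns (is $As$ below or above $A$, is $Cs$ below or above $C$) and invoke the correct part of Lemma~\ref{lemma-tops}(1) in each, while using that $\{A, As\}$ is a $\preceq_T$-interval to rule out the degenerate chains. This is purely a matter of carefully applying the already-proven order-theoretic lemma, so no genuinely new idea is needed, but it is the step where an argument could easily be stated imprecisely.
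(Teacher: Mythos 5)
Your overall strategy matches the paper's: reduce to connected components, use Lemma~\ref{lemma-tops}(2) for $\Lambda\ne\Lambda s$, and use Lemma~\ref{lemma-tops}(1) with a case analysis for $\Lambda=\Lambda s$. The paper's own proof says ``we can assume $\CJ$ is contained in a connected component'' and then splits into the same two cases, so this is essentially the same route.

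There is, however, a concrete gap in your treatment of $\CJ^\flat$ when $\Lambda\ne\Lambda s$, and it originates in your opening claim that ``both $(\cdot)^\sharp$ and $(\cdot)^\flat$ commute with disjoint unions over the connected components.'' This is true for $\sharp$ but false for $\flat$: with $\CJ_\Lambda:=\CJ\cap\Lambda$ one has $\CJ_\Lambda^\flat=\CJ_\Lambda\cap\CJ_\Lambda s\subset\Lambda\cap\Lambda s=\emptyset$, yet $\CJ^\flat\cap\Lambda=\CJ_\Lambda\cap(\CJ_{\Lambda s})s$ can be nonempty whenever $\CJ$ meets both $\Lambda$ and $\Lambda s$. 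So ``assembling over orbit-pairs'' the statement $\CJ_\Lambda^\flat=\emptyset$ does not recover $\CJ^\flat$, and the step as written would fail. The fix is short: for $\Lambda\ne\Lambda s$, $\CJ^\flat\cap\Lambda=(\CJ\cap\Lambda)\cap\bigl((\CJ\cap\Lambda s)s\bigr)$ is an intersection of two open subsets of $\Lambda$, the second being open by the homeomorphism of Lemma~\ref{lemma-tops}(2); hence $\CJ^\flat\cap\Lambda$ is open, and taking the union over all components gives that $\CJ^\flat$ is open. (The paper's reduction to $\CJ\subset\Lambda$ leaves exactly this point implicit as well; either phrasing needs the extra line.) Separately, in the $\Lambda=\Lambda s$ argument for $\CJ^\flat$, your claim ``one also gets $Cs\preceq_T A$'' is not always correct: if $A\preceq_T As$ and $C\preceq_T Cs$, the lifting argument gives $Cs\preceq_T As$ rather than $Cs\preceq_T A$. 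Since both $A$ and $As$ lie in $\CJ$, the conclusion $Cs\in\CJ$ still holds, but you should state the disjunction ``$Cs\preceq_T A$ or $Cs\preceq_T As$'' as the paper does.
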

\begin{proof} We can assume that $\CJ$ is contained in a connected component $\Lambda$ of $\CA_T$. If $\Lambda\ne\Lambda s$ then  Lemma \ref{lemma-tops}, (2) implies that $\CJ$ and $\CJ s$ are open, hence $\CJ^\sharp$ and $\CJ^\flat$ are open as well.  Now assume  $\Lambda=\Lambda s$. Let $A\in\CJ^\sharp$ and suppose that $B\preceq_T A$. If $A\in\CJ$, then $B\in\CJ$ as $\CJ$ is open, and hence $B\in\CJ^\sharp$. So suppose that $A\in\CJ s$. If $Bs\preceq_T B$, then $Bs\preceq_T A$ and hence $Bs\preceq_T As$ by  Lemma \ref{lemma-tops}, (1b) (note that the roles of $A$ and $B$ are reversed in the statement there). So $Bs\in\CJ$. If $B\preceq_T Bs$, then $B\preceq_T As$, again by Lemma \ref{lemma-tops}, (1b). So $B\in\CJ$.  In any case, $B\in\CJ^\sharp$. Hence $\CJ^\sharp$ is open.

If $A\in\CJ^\flat$ and $B\preceq_T A$, then $B\in\CJ$. If $As\preceq_T A$, then $Bs\preceq_T A$ by Lemma \ref{lemma-tops}, (1a). Hence $Bs\in\CJ$, so $B\in\CJ s$ and hence $B\in\CJ^\flat$. If $A\preceq_TAs$, then $B\preceq_T As$ and so $Bs\preceq_T As$, again by Lemma \ref{lemma-tops}, (1a). From $As\in\CJ$ we deduce $Bs\in\CJ$, hence   $B\in\CJ s$. So  $B\in\CJ^\flat$.
\end{proof}

 \begin{lemma} \label{lemma-sinvset} Let $\Lambda$ be a connected component of $\CA_T$ and suppose that $\Lambda=\Lambda s$.
\begin{enumerate}
\item Let $\CJ$ be an open subset of $\Lambda$. Then $A\in\CJ^\sharp\setminus\CJ$ implies $As\preceq_T A$ and $A\in\CJ\setminus\CJ^\flat$ implies $A\preceq_T As$.
\end{enumerate}
Let $\CJ_i$ be a family of open subsets of $\Lambda$. 
\begin{enumerate}\setcounter{enumi}{1}
\item We have   $\bigcup_{i\in I}\CJ_i^\sharp=(\bigcup_{i\in I}\CJ_i)^\sharp$ and $\bigcap_{i\in I}\CJ_i^\sharp=(\bigcap_{i\in I}\CJ_i)^\sharp$. 
\item We have  $\bigcup_{i\in I}\CJ_i^\flat=(\bigcup_{i\in I}\CJ_i)^\flat$ and $\bigcap_{i\in I}\CJ_i^\flat=(\bigcap_{i\in I}\CJ_i)^\flat$.
\end{enumerate}
\end{lemma}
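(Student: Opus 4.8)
The plan is to prove the three displayed identities by reducing everything to the two basic operations $\CT\mapsto\CT^\sharp=\CT\cup\CT s$ and $\CT\mapsto\CT^\flat=\CT\cap\CT s$, which are set-theoretic once we remember that the right multiplication by $s$ is a bijection of $\CA$ (hence of the connected component $\Lambda$, since $\Lambda=\Lambda s$). Part (1) is really a statement about how $\preceq_T$ interacts with $As$ and was already extracted from Lemma \ref{lemma-tops}: if $A\in\CJ^\sharp\setminus\CJ$ then $A\in\CJ s$, so $As\in\CJ$, and since $\CJ$ is open and $A\notin\CJ$ we must have $A\not\preceq_T As$; by Lemma \ref{lemma-tops}(1) $A$ and $As$ are $\preceq_T$-comparable, so $As\preceq_T A$. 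Symmetrically, if $A\in\CJ\setminus\CJ^\flat$ then $As\notin\CJ$ while $A\in\CJ$; openness of $\CJ$ forces $As\not\preceq_T A$, hence $A\preceq_T As$.

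For parts (2) and (3) I would first establish the purely formal facts that $(\cdot)^\sharp$ commutes with arbitrary unions and $(\cdot)^\flat$ commutes with arbitrary intersections — these hold for any bijection $s$ and require no topology: $(\bigcup_i\CJ_i)\cup(\bigcup_i\CJ_i)s=\bigcup_i(\CJ_i\cup\CJ_i s)$ since right multiplication by $s$ distributes over unions, and dually $(\bigcap_i\CJ_i)\cap(\bigcap_i\CJ_i)s=\bigcap_i(\CJ_i\cap\CJ_i s)$. So the only content is the ``opposite'' pair of identities: $\bigcap_i\CJ_i^\sharp=(\bigcap_i\CJ_i)^\sharp$ and $\bigcup_i\CJ_i^\flat=(\bigcup_i\CJ_i)^\flat$. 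Here set-theory alone gives only the easy inclusions $(\bigcap_i\CJ_i)^\sharp\subseteq\bigcap_i\CJ_i^\sharp$ and $\bigcup_i\CJ_i^\flat\subseteq(\bigcup_i\CJ_i)^\flat$; the reverse inclusions genuinely use openness together with part (1).

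I expect the reverse inclusion for $\sharp$ to be the main obstacle, and I would argue it as follows. Let $A\in\bigcap_i\CJ_i^\sharp$; we want $A\in(\bigcap_i\CJ_i)^\sharp$, i.e.\ $A\in\bigcap_i\CJ_i$ or $As\in\bigcap_i\CJ_i$. Suppose not: then there is an index with $A\notin\CJ_i$ and an index (possibly different) with $As\notin\CJ_j$. For the first index, $A\in\CJ_i^\sharp\setminus\CJ_i$, so part (1) gives $As\preceq_T A$. For the second, $As\in\CJ_j s$ (since $A\in\CJ_j^\sharp$ and $A\notin\CJ_j$... wait, we need care: $A\in\CJ_j^\sharp$ means $A\in\CJ_j$ or $As\in\CJ_j$) — so either $A\in\CJ_j$, giving $A\in\CJ_j$ and $As\notin\CJ_j$ hence $A\in\CJ_j\setminus\CJ_j^\flat$... but this is getting intricate, so the cleaner route is: from $A\notin\CJ_i$ and $A\in\CJ_i^\sharp$ we get $As\in\CJ_i$ and $As\preceq_T A$ by (1); from $As\notin\CJ_j$ and $As\in(\CJ_j s)^{?}$... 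Instead I would directly show: if $As\preceq_T A$ and $A\in\CJ_j^\sharp$, then $As\in\CJ_j$ — because if $A\in\CJ_j$ then $As\in\CJ_j$ by openness ($As\preceq_T A$), and if $As\in\CJ_j$ we are done. So $As\in\CJ_j$ for every $j$, i.e.\ $As\in\bigcap_j\CJ_j$, contradicting $As\notin\CJ_j$. The dual argument, using the second half of part (1) (namely $A\in\CJ_i\setminus\CJ_i^\flat\Rightarrow A\preceq_T As$, hence $As\in\CJ_i$ by openness) handles $(\bigcup_i\CJ_i)^\flat\subseteq\bigcup_i\CJ_i^\flat$. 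Finally I would remark that the reduction to a single connected component is harmless because all of $\sharp$, $\flat$, $\cup$, $\cap$ are computed component-by-component, and $\Lambda s=\Lambda$ by hypothesis, so it suffices to treat each $\CJ\cap\Lambda$ separately — exactly as in the proof of Lemma \ref{lemma-Js}.
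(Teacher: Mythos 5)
Your argument is correct and follows essentially the same route as the paper: part (1) comes from comparability of $A$ and $As$ (Lemma \ref{lemma-tops}) plus downward-closure of open sets, and the nontrivial inclusions in (2)--(3) are reduced to part (1) together with openness. The paper phrases the hard inclusion $\bigcap_i\CJ_i^\sharp\subseteq(\bigcap_i\CJ_i)^\sharp$ a touch more cleanly, using the $s$-invariance of both sides to reduce WLOG to the case $A\preceq_T As$ rather than arguing by contradiction with two separate indices, but the content is the same. One small slip in your parenthetical for the $\flat$ case: from $A\preceq_T As$ and $As\in\CJ_j$, openness gives $A\in\CJ_j$ (not ``$As\in\CJ_i$''), which is in fact exactly what you need to conclude $A\in\CJ_j^\flat$.
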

\begin{proof} 
The assumption $\Lambda=\Lambda s$ implies that $A$ and $As$ are comparable for all $A\in\Lambda$ by Lemma \ref{lemma-tops}. 

(1) Let $A\in\CJ^\sharp\setminus\CJ$. Then $As\in\CJ$, hence $A\preceq_T As$ would imply $A\in\CJ$, as $\CJ$ is open, which contradicts our assumption. If $A\in\CJ\setminus\CJ^\flat$, then $As\not\in\CJ$, so $As\preceq_T A$ would contradict the fact that $\CJ$ is open. 

(2) The first identity $(\bigcup_{i\in I}\CJ_i)^\sharp=\bigcup_{i\in I}\CJ_i^\sharp$ is clear. It is also clear that  $(\bigcap_{i\in I}\CJ_i)^\sharp\subset \bigcap_{i\in I}\CJ_i^\sharp$.  So assume $A\in\bigcap_{i\in I}\CJ_i^\sharp$. This implies $As\in\bigcap_{i\in I} \CJ_i^\sharp$. As $(\bigcap_{i\in I}\CJ_i)^\sharp$ is $s$-invariant, it suffices to show that it contains either $A$ or $As$. So it suffices to show that it contains $A$ in the case $A\preceq_TAs$. But in this case $A\in\CJ_i^\sharp$ implies $A\in\CJ_i$ by (1), hence $A$ is even contained in $\bigcap_{i\in I}\CJ_i$. 

(3) is proven with similar arguments.  \end{proof}

\subsection{More admissible families}
Recall the definition of an admissible family in Section \ref{subsec-admfam}.

\begin{lemma} \label{lemma-rigsinv} The family of $s$-invariant $T$-open subsets in $\CA$ is $T$-admissible. \end{lemma}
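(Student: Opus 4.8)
The claim is that the family $\DT$ of $s$-invariant $T$-open subsets of $\CA$ satisfies the four axioms of a $T$-admissible family from Definition~\ref{def-Tadm}. I would simply verify these four properties one at a time. Axiom (1) is immediate: $\CA$ is $T$-open, and $\CA s=\CA$ since the right $\hCW$-action is a bijection on $\CA$, so $\CA$ is $s$-invariant. Axiom (2) is immediate from the definition of $\DT$, since every element of $\DT$ is required to be $T$-open. Axiom (3), stability under finite intersections, follows from the second identities in Lemma~\ref{lemma-sinvset}(2) (or simply from $(\CJ_1\cap\CJ_2)s=\CJ_1 s\cap\CJ_2 s$), together with the fact that finite intersections of $T$-open sets are $T$-open (Remark~\ref{rem-top}(2)); and $s$-invariance of an intersection of $s$-invariant sets is trivial. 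Alternatively one notes $\CJ_1\cap\CJ_2 = (\CJ_1\cap\CJ_2)^\sharp$ when both are $s$-invariant.

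The only property requiring a small argument is axiom (4): given an arbitrary $T$-open subset $\CJ$ and a $\DZ R$-orbit $x$, I must produce an $s$-invariant $T$-open set $\CJ'$ with $\CJ'\cap x=\CJ\cap x$. The natural candidate is $\CJ'=\CJ^\sharp=\CJ\cup\CJ s$, which is $s$-invariant by construction and $T$-open by Lemma~\ref{lemma-Js}. It remains to check $\CJ^\sharp\cap x=\CJ\cap x$. The inclusion $\supseteq$ is clear. For $\subseteq$, suppose $A\in\CJ^\sharp\cap x$ but $A\notin\CJ$. Then $A\in\CJ s$, i.e.\ $As\in\CJ$. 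By Lemma~\ref{lemma-sinvset}(1), working inside the connected component $\Lambda$ containing $A$ (and using Lemma~\ref{lemma-comp}(1) so that $\CJ\cap\Lambda$ is an open subset of $\Lambda$), $A\in\CJ^\sharp\setminus\CJ$ forces $As\preceq_T A$. But $A$ and $As$ are $\preceq_T$-comparable elements of $\Lambda$, and they lie in different $\DZ R$-orbits whenever $As=s_{\alpha,n}(A)$ with $\alpha\in R_T^+$ differs from $A$ by a non-translation --- more to the point, $As=s_{\alpha,n}(A)\ne A$ means $\lambda_{As}-\lambda_A\in\DR_{\ge0}\alpha\setminus\DZ R$ in general, so $As$ and $A$ are in different $\DZ R$-orbits (by Lemma~\ref{lemma-gentop}, $A$ and $A+\gamma$ are $\preceq_T$-comparable only for $\gamma\in\DZ R$, whereas here the displacement is not a lattice vector unless $n=0$ and $\alpha$ fixed-point type, which is excluded). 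Hence $As\notin x$, so $As\in\CJ$ does not contradict $A\in\CJ^\sharp\cap x$; rather, this shows that an element of $\CJ^\sharp\cap x$ lying outside $\CJ$ cannot occur unless $A=As$, in which case $A=As\in\CJ$ anyway. Therefore $\CJ^\sharp\cap x\subseteq\CJ\cap x$.

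I should double-check the one delicate point in the last paragraph: that if $A$ and $As$ lie in the same $\DZ R$-orbit then in fact $A=As$. This is where the hypothesis that we are reflecting at an \emph{alcove wall} (i.e.\ $s\in\hCS$ acting on the right) matters: $As$ is the mirror image of $A$ across one of its own bounding hyperplanes, so $A$ and $As$ are adjacent distinct alcoves sharing a codimension-one face, and distinct adjacent alcoves are never translates of one another (a nonzero translation moves every point). So $A$ and $As$ lie in the same $\DZ R$-orbit iff $A=As$, which is impossible for genuinely distinct adjacent alcoves; thus $A\ne As$ always, and whenever $A\in x$ we have $As\notin x$. Feeding this back: if $A\in\CJ^\sharp\cap x$ then $As\notin x$, so we cannot have used $As\in\CJ$ to certify $A\in\CJ^\sharp$ via membership in $\CJ s$ in a way that pulls an orbit element of $x$ in from outside $\CJ$; concretely, $A\in\CJ^\sharp\cap x$ together with $A\notin\CJ$ gives $As\in\CJ$ and then Lemma~\ref{lemma-sinvset}(1) gives $As\prec_T A$, but then by Lemma~\ref{lemma-tops}(1) $\{As,A\}$ is a $\preceq_T$-interval in $\Lambda$ consisting of two alcoves in different $\DZ R$-orbits, and this is fine --- it simply does not produce a new element of $x$. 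I will phrase axiom (4) by checking directly that $\CJ^\sharp\cap x=\CJ\cap x$: any $A$ in the left side that is not in $\CJ$ has $As\in\CJ$ with $As$ in a \emph{different} orbit than $A$, contradicting $A\in x$ being the only orbit constraint --- wait, that is not a contradiction. The clean statement is: $\CJ^\sharp\cap x=(\CJ\cap x)\cup(\CJ s\cap x)=(\CJ\cap x)\cup(\CJ\cap x)s$; and since $x$ is a single $\DZ R$-orbit while $xs$ is a (possibly different) orbit, $(\CJ\cap x)s\subseteq x$ forces $xs=x$, i.e.\ $s$ stabilizes the orbit $x$, and in that case one uses Lemma~\ref{lemma-sinvset}(1) to see $\CJ^\sharp\cap x=\CJ\cap x$ directly; if $xs\ne x$ then $\CJ s\cap x=\varnothing$ and $\CJ^\sharp\cap x=\CJ\cap x$ trivially. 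I expect the bookkeeping around whether $s$ fixes the orbit $x$ to be the only genuine obstacle, and it is handled by exactly this case split; everything else is formal.
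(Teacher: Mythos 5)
Your verification of axioms (1)--(3) is fine, and the idea to try $\CJ' = \CJ^\sharp = \CJ\cup\CJ s$ for axiom (4) is the right starting point. But there is a genuine gap: without first shrinking $\CJ$, the equality $\CJ^\sharp\cap x=\CJ\cap x$ is simply false. Note first that $\CJ s\cap x\ne(\CJ\cap x)s$; the correct identity is $\CJ s\cap x=(\CJ\cap xs)\,s$. Moreover $xs\ne x$ for \emph{every} $\DZ R$-orbit $x$: if $A_w\in x$ then $A_{ws}\in xs$, and $ws=t_\gamma w$ would force the simple affine reflection $s$ to be a conjugate of a translation, hence itself a translation, which is absurd. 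So your dichotomy ``$xs=x$ vs.\ $xs\ne x$'' has a vacuous first branch, and the claim in the second branch (``$xs\ne x\Rightarrow\CJ s\cap x=\varnothing$'') is wrong: take $T=T^\emptyset$, let $B\in xs$, and let $\CJ=\{\preceq_T B\}\subset xs$; then $\CJ\cap x=\emptyset$, but $Bs\in\CJ s\cap x$. Thus $\CJ^\sharp\cap x\supsetneq\CJ\cap x$ and your candidate fails.

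The missing step, which makes the paper's proof work, is the preliminary reduction: replace $\CJ$ by $(\CJ\cap x)_{\preceq_T}$, the smallest $T$-open set containing $\CJ\cap x$. This is harmless for axiom (4) (it does not change $\CJ\cap x$) and places $\CJ$ inside the connected component $\Lambda$ containing $x$. After this reduction, the counterexample above disappears: $\CJ s\subset\Lambda s$, so if $x$ and $xs$ lie in distinct components then $\CJ s\cap x=\emptyset$ for free. If they lie in the same component, one more case split is needed --- on the direction of the $\preceq_T$-comparison between $A$ and $As$ for $A\in x$ (which is uniform over $x$ since translations preserve $\preceq_T$). If $A\preceq_T As$ for $A\in x$, then openness of $\CJ$ directly gives $\CJ s\cap x\subset\CJ$. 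If instead $As\preceq_T A$, your appeal to Lemma~\ref{lemma-sinvset}(1) (``$A\in\CJ^\sharp\setminus\CJ\Rightarrow As\preceq_T A$'') gives no contradiction by itself, since that inequality is perfectly compatible with $\CJ$ being downward closed; here one must use the reduction again, writing $\CJ=\bigcup_{A\in\CJ\cap x}\{\preceq_T A\}$ and invoking Lemma~\ref{lemma-tops}(1a) to see each $\{\preceq_T A\}$ is already $s$-invariant. Without both the preliminary reduction and the component/direction case split, the argument does not close.
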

\begin{proof} Properties (1), (2) and (3) of an admissible family are clear. It remains to check property (4). Let $\CJ\subset\CA_T$ be open and let $x$ be a $\DZ R$-orbit in $\CA$. We can replace $\CJ$ by the smallest open subset containing $\CJ\cap x$ and hence assume $\CJ=(\CJ\cap x)_{\preceq_T}$. Then $\CJ$ and $x$ are contained in the same connected component of $\CA_T$. We claim that  $(\CJ\cup\CJ s)\cap x=\CJ\cap x$. As $\CJ\cup\CJ s$ is open by Lemma \ref{lemma-Js} and clearly $s$-invariant, this serves our purpose.

If $x$ and $xs$ are not contained in the same component of $\CA_T$, then $\CJ s\cap x=\emptyset$ and the statement follows.  If $x$ and $xs$ are contained in the same component, then either $A\preceq_T As$ for all $A\in x$ or $As\preceq_T A$ for all $A\in x$ as the $\DZ R$-action preserves the partial order. In the second case, the set $\CJ$ is $s$-invariant since $\CJ=\bigcup_{A\in\CJ\cap x}\{\preceq_T A\}$ and since, by Lemma \ref{lemma-tops}, the set $\{\preceq_T A\}$ is $s$-invariant for all $A\in x$, so $(\CJ\cup\CJ s)\cap x=\CJ\cap x$. So  assume that $A\preceq_T As$ for all $A\in x$. Let $B\in\CJ s\cap x$. Then $B\preceq_T Bs$ and $Bs\in\CJ$, hence $B\in\CJ$ as $\CJ$ is open. So $\CJ s\cap x\subset\CJ\cap x$ and hence $(\CJ\cup\CJ s)\cap x=\CJ\cap x$. 
 \end{proof}

\begin{lemma} \label{lemma-admtwice} Let $T\to T^\prime$ be a homomorphism of base rings. Then the family of $s$-invariant $T$-open subsets of $\CA$ is $T^\prime$-admissible.  \end{lemma}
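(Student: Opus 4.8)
The plan is to reduce this to the previously established admissibility result for the single base ring $T$ in Lemma \ref{lemma-rigsinv}, by carefully tracking which topological notions change and which do not when we pass from $T$ to $T^\prime$. Recall that a homomorphism $T\to T^\prime$ of base rings satisfies $R_{T^\prime}^+\subseteq R_T^+$ (Remark \ref{rem-ordorb}), so every $T$-open subset is $T^\prime$-open, and the identity $\CA_{T^\prime}\to\CA_T$ is continuous. The family under consideration is the family $\DT$ of $s$-invariant $T$-open subsets of $\CA$; we must verify the four conditions of Definition \ref{def-Tadm} relative to $T^\prime$.

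First I would dispatch conditions (1)--(3), which are immediate: $\CA$ is $T$-open and visibly $s$-invariant; every $T$-open subset is $T^\prime$-open by Remark \ref{rem-ordorb}; and the family is stable under finite intersections, since the intersection of $T$-open sets is $T$-open and the intersection of $s$-invariant sets is $s$-invariant. The only real content is condition (4): for any $T^\prime$-open subset $\CJ$ and any $\DZ R$-orbit $x$ in $\CA$, we must produce an $s$-invariant $T$-open subset $\CJ^\prime$ with $\CJ\cap x = \CJ^\prime\cap x$. The key observation is that, by Lemma \ref{lemma-gentop}, the topology induced on a $\DZ R$-orbit $x$ is independent of the base ring; in particular $\CJ\cap x$ is an order ideal of $x$ for the $T$-induced order (equivalently the $T^\prime$-induced order). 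So set $\CJ_0 := (\CJ\cap x)_{\preceq_T}$, the smallest $T$-open set containing $\CJ\cap x$. Since $\CJ\cap x \subseteq \CJ_0 \cap x$ and $\CJ_0 \cap x$ is contained in the $T^\prime$-open set $\CJ$ (because $\CJ_0 \subseteq (\CJ\cap x)_{\preceq_{T^\prime}}\subseteq \CJ$, using $\preceq_{T}$ refines $\preceq_{T^\prime}$ and $\CJ$ is $T^\prime$-open and contains $\CJ \cap x$), we in fact get $\CJ_0 \cap x = \CJ \cap x$. Now I would apply exactly the argument in the proof of Lemma \ref{lemma-rigsinv}, verbatim, to the $T$-open set $\CJ_0$: one checks $(\CJ_0 \cup \CJ_0 s)\cap x = \CJ_0 \cap x$, using Lemma \ref{lemma-tops} to see that each set $\{\preceq_T A\}$ for $A\in x$ is $s$-invariant in the relevant case, and Lemma \ref{lemma-Js} to see that $\CJ_0 \cup \CJ_0 s = \CJ_0^\sharp$ is again $T$-open. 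Then $\CJ^\prime := \CJ_0^\sharp$ is an $s$-invariant $T$-open set with $\CJ^\prime \cap x = \CJ_0 \cap x = \CJ \cap x$, as desired.

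Alternatively, and perhaps more cleanly, one can simply invoke Lemma \ref{lemma-rigsinv} together with the chain of observations that (a) Lemma \ref{lemma-rigsinv} already shows the family $\DT$ satisfies property (4) of admissibility when tested against $T$-open sets $\CJ$, and (b) for a $T^\prime$-open set $\CJ$, replacing $\CJ$ by $\CJ_0 = (\CJ\cap x)_{\preceq_T}$ does not change $\CJ \cap x$ by the base-ring-independence of the induced order on $x$ (Lemma \ref{lemma-gentop}); then the $T$-case of property (4) applied to $\CJ_0$ furnishes the required $s$-invariant $T$-open $\CJ^\prime$. Either way the whole statement follows.

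The main obstacle — though it is a mild one — is making sure that the passage from the $T^\prime$-open $\CJ$ to a $T$-open surrogate really preserves the intersection with the orbit $x$; this is precisely where Lemma \ref{lemma-gentop} does the work, since a priori shrinking the topology (going from $\preceq_{T}$ to the coarser $\preceq_{T^\prime}$) could enlarge $\CJ\cap x$, but on a single $\DZ R$-orbit the two topologies agree. Everything else is a direct transcription of the proof of Lemma \ref{lemma-rigsinv}, so I would keep the write-up short: state conditions (1)--(3) as clear, then give the one-paragraph argument for condition (4) centered on the reduction to $\CJ_0$ and the citation of Lemma \ref{lemma-rigsinv} (or its proof) and Lemma \ref{lemma-gentop}.
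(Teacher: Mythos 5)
Your second, ``cleaner'' route is exactly the paper's proof: properties (1)--(3) are routine, and for (4) one first uses Lemma~\ref{lemma-Top} (the family of $T$-open sets is $T^\prime$-admissible) to replace the $T^\prime$-open set $\CJ$ by a $T$-open set with the same intersection with $x$, and then applies property~(4) of $T$-admissibility from Lemma~\ref{lemma-rigsinv} to that $T$-open set. So your plan is sound and in line with the paper.

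One slip in your first, hands-on variant: the parenthetical justification $\CJ_0 = (\CJ\cap x)_{\preceq_T} \subseteq (\CJ\cap x)_{\preceq_{T^\prime}} \subseteq \CJ$ has the first inclusion backwards, and in fact $\CJ_0\subseteq\CJ$ is false in general. Since $R_{T^\prime}^+\subseteq R_T^+$, the relation $\preceq_T$ \emph{contains} $\preceq_{T^\prime}$, so $\{\preceq_T A\}\supseteq\{\preceq_{T^\prime}A\}$ and hence $(\CJ\cap x)_{\preceq_T}\supseteq(\CJ\cap x)_{\preceq_{T^\prime}}$; moreover $\CJ$ is only $T^\prime$-open, not $T$-open, so $\CJ_0$ may well contain alcoves outside $\CJ$. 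What you actually need (and what is true) is the weaker statement $\CJ_0\cap x\subseteq\CJ\cap x$: if $B\in\CJ_0\cap x$ then $B\preceq_T A$ for some $A\in\CJ\cap x$, and since both $A,B$ lie in the single $\DZ R$-orbit $x$, Lemma~\ref{lemma-gentop} gives $B\preceq_{T^\prime}A$, so $B\in\CJ$ because $\CJ$ is $T^\prime$-open. This fixes the argument; the remaining content -- either re-running the proof of Lemma~\ref{lemma-rigsinv} on $\CJ_0$, or simply citing it -- is fine.
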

\begin{proof} Again, the properties (1), (2) and (3) of an admissible family are easily checked. Let $\CJ$ be a $T^\prime$-open set and $x\in\CV$. By Lemma \ref{lemma-Top} there is a $T$-open set $\CJ^\prime$ with $\CJ\cap x=\CJ^\prime\cap x$. By Lemma \ref{lemma-rigsinv} there is a $T$-open $s$-invariant subset $\CJ^{\prime\prime}$ with $\CJ^{\prime\prime}=\CJ^\prime\cap x$. This is what we wanted to show. 
\end{proof}
\subsection{$s$-invariants in $\CZ$} \label{subsec-sinvinZ} Recall that we have a right action of $s$ on the set $\CV$ of $\DZ R$-orbits in $\CA$. Suppose $\CL\subset\CV$ is $s$-invariant (i.e. $\CL=\CL s$).
Define $\eta_s\colon\bigoplus_{x\in\CL}T\to\bigoplus_{x\in\CL}T$  by $\eta_s(z_x)=(z^\prime_x)$, where $z^\prime_x=z_{xs}$ for all $x\in\CL$. 

\begin{lemma}  $\eta_s$ preserves the subalgebra $\CZ^\CL\subset\bigoplus_{x\in\CL}T$. 
\end{lemma}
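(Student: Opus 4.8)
The plan is to recall the explicit description of $\CZ^\CL$ from Section~\ref{subsec-Zmod}: since $T$ is flat, $\CZ^\CL = \CZ^\CL_T = p^\CL(\CZ_S)\otimes_S T$ sits inside $\bigoplus_{x\in\CL}T$, and it is generated over $T$ by the images of tuples $(z_x)_{x\in\CV}\in\CZ_S$, i.e.\ by tuples satisfying the congruences $z_x\equiv z_{s_\beta x}\bmod\beta^\vee$ for all $x$ and all $\beta\in R^+$ with $s_\beta x$ also in the relevant index set. So it suffices to check that $\eta_s$ sends an element coming from $\CZ_S$ to an element of the same form, and then conclude by $T$-linearity (note $\eta_s$ is $T$-linear, since the right $s$-action only permutes the $\CV$-coordinates and does not touch the $T$-coefficients).

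First I would fix an element $z=(z_x)_{x\in\CL}\in\CZ^\CL$ and write $z'=\eta_s(z)$, so $z'_x=z_{xs}$. I need to verify: for every $x\in\CL$ and every $\alpha\in R^+$ with $s_\alpha x\in\CL$ one has $z'_x\equiv z'_{s_\alpha x}\bmod\alpha^\vee$, i.e.\ $z_{xs}\equiv z_{(s_\alpha x)s}\bmod\alpha^\vee$. The key point is that the right action of $s\in\hCS$ on $\CV$ commutes with the left $\CW$-action (this was observed right after the definition of the right action in Section~\ref{subsec-conncomp}, using that $\CA$ is a principal homogeneous $\hCW$-space and the two actions come from left and right multiplication). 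Hence $(s_\alpha x)s = s_\alpha(xs)$. Writing $y:=xs\in\CL$ (using $\CL=\CL s$), the required congruence becomes $z_y\equiv z_{s_\alpha y}\bmod\alpha^\vee$, which holds because $z\in\CZ^\CL\subset\CZ(\CL)$ and $y, s_\alpha y = (s_\alpha x)s\in\CL s=\CL$. This shows $z'\in\CZ(\CL)$.

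The remaining subtlety is to upgrade ``$z'\in\CZ(\CL)$'' to ``$z'\in\CZ^\CL$'', since $\CZ^\CL=p^\CL(\CZ_S)\otimes_S T$ is in general a proper subalgebra of $\CZ(\CL)$. Here I would use that $\eta_s$ is visibly an involution ($\eta_s\circ\eta_s=\id$ since $s^2=\id$ on $\CV$), so it is a bijection of $\bigoplus_{x\in\CL}T$. It then suffices to show $\eta_s(\CZ^\CL)\subseteq\CZ^\CL$ and apply the involution to get equality; and for the inclusion, I trace through the construction of $\CZ^\CL_S=p^\CL(\CZ_S)$: an element of $\CZ^\CL_S$ is by definition $p^\CL(w)$ for some $w=(w_x)_{x\in\CV}\in\CZ_S$, and $\eta_s$ applied to $p^\CL(w)$ equals $p^\CL(\eta_s w)$ where now $\eta_s$ acts on all of $\bigoplus_{x\in\CV}S$ by the same coordinate permutation $x\mapsto xs$. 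By the identical commutation argument $(s_\beta x)s=s_\beta(xs)$ applied on all of $\CV$, the tuple $\eta_s w$ again satisfies all the defining congruences of $\CZ_S$, so $\eta_s w\in\CZ_S$ and hence $p^\CL(\eta_s w)\in\CZ^\CL_S$. Tensoring with $T$ gives $\eta_s(\CZ^\CL)\subseteq\CZ^\CL$, and the involution property finishes the proof.

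The main obstacle — and the step I would be most careful about — is precisely this last point: making sure one does not merely land in $\CZ(\CL)$ but in the smaller image $\CZ^\CL=p^\CL(\CZ_S)\otimes_S T$. The clean way around it is the observation that $\eta_s$ is a coordinate permutation defined on all of $\bigoplus_{x\in\CV}S$ (not just on $\CL$-coordinates), it preserves $\CZ_S$ for the same commutation reason, and it commutes with the projection $p^\CL$ whenever $\CL$ is $s$-invariant; everything else is a routine diagram chase plus flat base change along $S\to T$.
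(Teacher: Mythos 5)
Your proof is correct and follows essentially the same route as the paper's: establish that $\eta_s$ preserves $\CZ_S$ on all of $\bigoplus_{x\in\CV}S$ via commutation of the right $s$-action with the left $\CW$-action, observe that $\eta_s$ commutes with $p^\CL$ for $s$-invariant $\CL$, and base change along $S\to T$. You also correctly flag the subtlety (glossed over in the paper's terse proof) that landing in $\CZ_T(\CL)$ is not enough, since $\CZ^\CL$ can be a proper subalgebra -- but your resolution is exactly the paper's argument.
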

\begin{proof} This follows directly from the definition in the case $T=S$ and $\CL=\CV$, as the right action of $s$ on $\CV$ commutes with the left action of $\CW$.  It  follows for arbitrary $T$ and $\CL=\CV$. Clearly, the projection $\bigoplus_{x\in\CV} T \to\bigoplus_{x\in\CL} T$ intertwines the action of $\eta_s$ on both spaces for an arbitrary $s$-invariant $\CL$. The claim follows. \end{proof} 

 \begin{definition}
We denote by $\CZ^{\CL,s}$ the sub-$T$-algebra of $\eta_s$-fixed elements, and by $\CZ^{\CL,-s}\subset\CZ^{\CL}$ the sub-$\CZ^{\CL,s}$-module of $\eta_s$-antiinvariants (i.e. the set of elements $z$ with $\eta_s(z)=-z$). We write $\CZ^s$ and $\CZ^{-s}$ in the case $\CL=\CV$.
\end{definition}
As $2$ is invertible in $k$ we have $\CZ^{\CL}=\CZ^{\CL,s}\oplus\CZ^{\CL,-s}$. 

\begin{lemma}\label{lemma-lstrucfree}  We have $\CZ^{\CL,-s}\cong\CZ^{\CL,s}[-2]$ as a $\CZ^{\CL,s}$-module. In particular, $\CZ^{\CL}\cong\CZ^{\CL,s}\oplus\CZ^{\CL,s}[-2]$ as a $\CZ^{\CL,s}$-module. \end{lemma}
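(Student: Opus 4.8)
The plan is to exhibit one homogeneous element $\sigma\in\CZ^{\CL,-s}$ of degree $2$ and show that multiplication by $\sigma$ is an isomorphism $\CZ^{\CL,s}\xrightarrow{\sim}\CZ^{\CL,-s}$ of $\CZ^{\CL,s}$-modules; this proves the first assertion, and the second then follows from the decomposition $\CZ^\CL=\CZ^{\CL,s}\oplus\CZ^{\CL,-s}$, which holds because $2\in k^\times$. To produce $\sigma$ I would first construct a global antiinvariant. Write $s=s_{\beta,n}$ with $\beta\in R^+$ and $n\in\DZ$. For $x\in\CV$ choose $w\in\hCW$ with $\ol{A_w}=x$ and let $\ol w\in\CW$ denote its linear part; since translations act trivially on $V^\ast$, the coweight $\ol w(\beta^\vee)\in X^\vee$ depends only on $x$, and I set $\hat\sigma_x:=\ol w(\beta^\vee)$, viewed in $X^\vee_k\subset S$, so $\hat\sigma_x$ is homogeneous of degree $2$. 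From $s_\alpha(v)\equiv v\pmod{\alpha^\vee}$ (for the action of $s_\alpha$ on $V^\ast$) one checks the defining congruences of the structure algebra, so $\hat\sigma=(\hat\sigma_x)_{x\in\CV}\in\CZ_S\subset\CZ$; and since the linear part of $ws$ is $\ol w\,s_\beta$ with $s_\beta(\beta^\vee)=-\beta^\vee$, one gets $\eta_s(\hat\sigma)=-\hat\sigma$, so $\hat\sigma\in\CZ^{-s}$. Put $\sigma:=p^\CL(\hat\sigma)\in\CZ^{\CL,-s}$.

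The geometric input I need is that for $x=\ol{A_w}\in\CV$ the alcoves $A_w$ and $A_{ws}$ share the wall $w(H_{\beta,n})$, which is of the form $H_{\gamma_x,m}$ for a unique $\gamma_x\in R^+$ and some $m\in\DZ$; comparing the two descriptions of this hyperplane gives $\hat\sigma_x=\pm\gamma_x^\vee$, and a short computation in $\CV=\DZ R\backslash\hCW$ shows $xs=s_{\gamma_x}(x)$. Consequently every $z\in\CZ^\CL$ satisfies $z_x\equiv z_{xs}\pmod{\gamma_x^\vee}$ for all $x\in\CL$. Moreover, for $\alpha\in R^+$ the GKM condition yields a dichotomy: either $\gamma_x^\vee$ is a $k$-multiple of $\alpha^\vee$, which happens exactly when $\gamma_x=\alpha$, equivalently $s_\alpha(x)=xs$; or $\gamma_x^\vee$ is a non-zero-divisor in $T/\alpha^\vee T$ (it is one in $S/\alpha^\vee S$, and $T$ is $S$-flat).

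Next I would show that $z\mapsto z\sigma$ is an isomorphism $\CZ^{\CL,s}\to\CZ^{\CL,-s}$. It is well defined and $\CZ^{\CL,s}$-linear because $\CZ^\CL$ is a ring and $\eta_s(z\sigma)=\eta_s(z)\eta_s(\sigma)=-z\sigma$. It is injective: each $\hat\sigma_x$ is a nonzero linear form, hence a non-zero-divisor in $T$ by $S$-flatness, so $z\sigma=0$ forces $z_x=0$ for all $x\in\CL$, i.e.\ $z=0$. For surjectivity, given $z\in\CZ^{\CL,-s}$ I lift it to $\CZ$ and replace the lift by its $\eta_s$-antiinvariant part, obtaining $\tilde z\in\CZ^{-s}$ with $p^\CL(\tilde z)=z$. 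For every $x\in\CV$ one has $\tilde z_x\equiv\tilde z_{xs}=-\tilde z_x\pmod{\gamma_x^\vee}$, so, as $2\in k^\times$ and $\hat\sigma_x=\pm\gamma_x^\vee$, we get $\tilde z_x=\hat\sigma_x\tilde y_x$ for a unique $\tilde y_x\in T$. The tuple $\tilde y=(\tilde y_x)_{x\in\CV}$ is $\eta_s$-invariant (both $\tilde z$ and $\hat\sigma$ are antiinvariant), and it lies in $\CZ$: for $\alpha\in R^+$ and $x\in\CV$, combining $\tilde z_x\equiv\tilde z_{s_\alpha x}$ with $\hat\sigma_x\equiv\hat\sigma_{s_\alpha x}\pmod{\alpha^\vee}$ gives $\hat\sigma_x(\tilde y_x-\tilde y_{s_\alpha x})\in\alpha^\vee T$, and the dichotomy finishes the job — either $\hat\sigma_x$ is a non-zero-divisor modulo $\alpha^\vee$, forcing $\tilde y_x\equiv\tilde y_{s_\alpha x}$, or $s_\alpha(x)=xs$ and then $\tilde y_{s_\alpha x}=\tilde y_{xs}=\tilde y_x$ by invariance. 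Hence $\tilde y\in\CZ^s$, so $y:=p^\CL(\tilde y)\in\CZ^{\CL,s}$ and $y\sigma=p^\CL(\hat\sigma\tilde y)=p^\CL(\tilde z)=z$. Since $\sigma$ is homogeneous of degree $2$, this isomorphism raises degrees by $2$, which is exactly $\CZ^{\CL,-s}\cong\CZ^{\CL,s}[-2]$.

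I expect the main obstacle to be the surjectivity step, and inside it the claim that the coordinatewise quotient $\tilde y$ genuinely belongs to the structure algebra rather than merely to $\bigoplus_{x\in\CV}T$; this is the one place where the GKM condition is essential, and it is the reason I would divide only after lifting to a global antiinvariant $\tilde z\in\CZ^{-s}$ on all of $\CV$, instead of dividing directly on $\CL$, where one controls the congruences relating $x$ to $xs$ but not to its other $s_\alpha$-neighbours.
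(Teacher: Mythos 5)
Your proof is correct, but it is worth noting up front that the paper does not actually prove this lemma: it simply cites \cite[Lemma~5.1 \& Proposition~5.3]{FieTAMS}, reducing the claim to general facts about structure algebras of Coxeter systems. So what you have written is a genuinely different contribution: a self-contained, elementary verification inside the present setup. Your key move is the construction of the global degree-$2$ antiinvariant $\hat\sigma=(\ol w(\beta^\vee))_{\,\ol{A_w}=x}\in\CZ_S$ (writing $s=s_{\beta,n}$) — this is the standard ``Euler class'' attached to the wall $s$, and the rest is the familiar Demazure-style argument that multiplication by it identifies invariants with antiinvariants after a degree shift. All the steps that could plausibly go wrong do in fact check out: $\hat\sigma_x$ depends only on the $\DZ R$-orbit $x$ because translations have trivial linear part; $\hat\sigma\in\CZ_S$ follows from $s_\alpha(h)\equiv h\pmod{\alpha^\vee}$ on $X^\vee$; antiinvariance follows from $s_\beta(\beta^\vee)=-\beta^\vee$; injectivity of $z\mapsto z\sigma$ uses that each $\hat\sigma_x$ is a nonzero linear form, hence a non-zero-divisor in $T$ by $S$-flatness; and the surjectivity step — dividing a global antiinvariant $\tilde z\in\CZ^{-s}$ coordinatewise by $\hat\sigma$ and checking the edge congruences of $\tilde y$ via the GKM dichotomy — is where you are quite right that care is needed. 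Two small points you may want to make explicit when writing this up: first, the identification $xs=s_{\gamma_x}(x)$ and $\hat\sigma_x=\pm\gamma_x^\vee$ deserves a sentence of justification (the wall between $A_w$ and $A_{ws}$ is $w(H_{\beta,n})=H_{\gamma_x,m}$, and comparing normal vectors gives $\ol w(\beta^\vee)=\pm\gamma_x^\vee$); second, your conclusion that the coordinatewise quotient $\tilde y$ actually lies in $\CZ_T=\CZ_S\otimes_ST$ and not merely in the ``naive'' structure algebra over $T$ is justified because $\CV$ is finite and tensoring the defining left-exact sequence $0\to\CZ_S\to\bigoplus_x S\to\bigoplus_{\text{edges}}S/\alpha^\vee S$ with the flat $S$-algebra $T$ commutes with finite kernels, so $\CZ_T$ is exactly the set of $T$-tuples satisfying the congruences — this is worth a remark since the paper defines $\CZ_T$ by base change rather than by congruences. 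With those two clarifications spelled out, your argument gives a clean replacement for the external citation.
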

Note that here the grading shift only makes sense, of course, if $T$ is a graded $S$-algebra. Otherwise, the statement holds without the shift. This remark applies to each occurrence of grading shifts in the following.
\begin{proof} As $\CZ_S$ is the  structure algebra associated with the root system $R$ the statement translates into \cite[Lemma 5.1 \& Proposition 5.3]{FieTAMS}.
\end{proof}

\begin{remark}\label{rem-trans} For a $\CZ$-module $M$ that is $\CZ$-supported on an $s$-invariant set $\CL$ it follows that $\CZ^{\CL}\otimes_{\CZ^{\CL,s}}M\cong M\oplus M[-2]$ as a $\CZ^{\CL,s}$-module, and hence as a $T$-module. So the functor $\CZ^{\CL}\otimes_{\CZ^{\CL,s}} (\cdot)$ is exact as an endofunctor of the category of  $\CZ^{\CL}$-modules and preserves root torsion freeness and root reflexivity.
\end{remark}

\begin{lemma}\label{lemma-transsinv} Let $M$ be a  root torsion free $\CZ$-module that is $\CZ$-supported on $\CL$. Then there is a canonical isomorphism $\CZ\otimes_{\CZ^s} M=\CZ^{\CL}\otimes_{\CZ^{\CL,s}}M$. In particular, $\CZ\otimes_{\CZ^s} M$ is $\CZ$-supported on $\CL$ as well. 
\end{lemma}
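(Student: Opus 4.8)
The statement to prove is Lemma~\ref{lemma-transsinv}: for a root torsion free $\CZ$-module $M$ that is $\CZ$-supported in an $s$-invariant subset $\CL\subset\CV$, there is a canonical isomorphism $\CZ\otimes_{\CZ^s}M\cong\CZ^\CL\otimes_{\CZ^{\CL,s}}M$, and consequently $\CZ\otimes_{\CZ^s}M$ is $\CZ$-supported in $\CL$. The idea is to reduce the left-hand side to the right-hand side using that $M$ is annihilated, in the appropriate sense, by the part of $\CZ$ living away from $\CL$. More precisely, since $M$ is $\CZ$-supported on $\CL$ we have $M\cong M^\CL=\CZ^\CL\otimes_\CZ M$, so that the $\CZ$-action on $M$ factors through the projection $p^\CL\colon\CZ\to\CZ^\CL$. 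The plan is to exploit this factorization to rewrite the extension of scalars $\CZ\otimes_{\CZ^s}M$ as $\CZ^\CL\otimes_{\CZ^{\CL,s}}M$.

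\textbf{Key steps.} First I would record the commutative diagram of ring maps: $\CZ^s\hookrightarrow\CZ$, $\CZ^{\CL,s}\hookrightarrow\CZ^\CL$, and the projections $p^\CL\colon\CZ\to\CZ^\CL$ and its restriction $\CZ^s\to\CZ^{\CL,s}$ (this restriction is well-defined and surjective because $\eta_s$ commutes with the projection $\bigoplus_{x\in\CV}T\to\bigoplus_{x\in\CL}T$, as noted in Section~\ref{subsec-sinvinZ}, and by definition of $\CZ^{\CL,s}$ as the image of $\CZ^s$ intersected with $\eta_s$-fixed elements --- one should check carefully that $p^\CL(\CZ^s)=\CZ^{\CL,s}$, or at least that the target algebra for the base change is the right one). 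Second, using $M\cong\CZ^\CL\otimes_\CZ M$ and associativity of tensor product, compute
\[
\CZ\otimes_{\CZ^s}M\cong\CZ\otimes_{\CZ^s}(\CZ^\CL\otimes_\CZ M)\cong(\CZ\otimes_{\CZ^s}\CZ^\CL)\otimes_\CZ M.
\]
Third, I would identify $\CZ\otimes_{\CZ^s}\CZ^\CL$ with $\CZ^\CL\otimes_{\CZ^{\CL,s}}\CZ^\CL$ as a $(\CZ,\CZ^\CL)$-bimodule: the right action of $\CZ$ here goes through $p^\CL$, and by Lemma~\ref{lemma-lstrucfree} both $\CZ^\CL$ (over $\CZ^{\CL,s}$) and $\CZ$ (over $\CZ^s$) split as a copy plus a shifted copy of the base, compatibly under $p^\CL$, so the map $\CZ\otimes_{\CZ^s}\CZ^\CL\to\CZ^\CL\otimes_{\CZ^{\CL,s}}\CZ^\CL$ induced by $p^\CL$ on the left factor is an isomorphism. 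Finally, tensoring with $M$ over $\CZ$ and collapsing $\CZ^\CL\otimes_\CZ M\cong M$ again yields $\CZ^\CL\otimes_{\CZ^{\CL,s}}M$. Naturality is automatic since every map used is canonical; the last assertion follows because $\CZ^\CL\otimes_{\CZ^{\CL,s}}M$ is evidently a $\CZ^\CL$-module, hence $\CZ$-supported on $\CL$.

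\textbf{Main obstacle.} The delicate point is the third step, namely the clean identification of the bimodule $\CZ\otimes_{\CZ^s}\CZ^\CL$ with $\CZ^\CL\otimes_{\CZ^{\CL,s}}\CZ^\CL$, and behind it the verification that the projection $\CZ^s\to\CZ^{\CL,s}$ is surjective with $\CZ=\CZ^s\oplus\CZ^{-s}$ mapping onto $\CZ^\CL=\CZ^{\CL,s}\oplus\CZ^{\CL,-s}$ respecting the decompositions. One must use that $2$ is invertible (so the $\pm$-decompositions exist and are respected by $p^\CL$, which commutes with $\eta_s$) together with Lemma~\ref{lemma-lstrucfree}, which pins down $\CZ^{-s}\cong\CZ^s[-2]$ and $\CZ^{\CL,-s}\cong\CZ^{\CL,s}[-2]$ as modules over the respective invariant subalgebras. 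Once these structural facts are in place the rest is formal manipulation of tensor products; I would be careful to phrase everything so that root torsion freeness of $M$ is used only where genuinely needed (it guarantees $M$ embeds in its generic localization so that ``$\CZ$-supported on $\CL$'' has the expected meaning and $M\cong M^\CL$).
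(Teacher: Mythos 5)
Your argument is correct and rests on the same key facts as the paper's: the surjection $\CZ\otimes_{\CZ^s}M\to\CZ^{\CL}\otimes_{\CZ^{\CL,s}}M$ induced by $p^{\CL}$ (which exists because the $\CZ$-action on $M$ factors through $\CZ^{\CL}$), together with the free rank-two splitting of $\CZ$ over $\CZ^s$ and of $\CZ^{\CL}$ over $\CZ^{\CL,s}$ from Lemma~\ref{lemma-lstrucfree}, compatibly under $p^{\CL}$ since $p^{\CL}$ commutes with $\eta_s$ and $2$ is invertible. The paper argues directly that the surjection is an isomorphism via Remark~\ref{rem-trans}, whereas you first establish the special case $M=\CZ^{\CL}$ as a $(\CZ,\CZ^{\CL})$-bimodule identification $\CZ\otimes_{\CZ^s}\CZ^{\CL}\cong\CZ^{\CL}\otimes_{\CZ^{\CL,s}}\CZ^{\CL}$ and then tensor with $M$ over $\CZ$; this is a harmless reorganization of the same computation, and you correctly identify the one genuinely nontrivial verification, namely that $p^{\CL}$ carries the decomposition $\CZ=\CZ^s\oplus\CZ^{-s}$ onto $\CZ^{\CL}=\CZ^{\CL,s}\oplus\CZ^{\CL,-s}$ with a free generator going to a free generator.
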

\begin{proof} Note that the $\CZ$-action on $M$ factors over the homomorphism $\CZ\to\CZ^{\CL}$, hence we obtain a homomorphism $\CZ\otimes_{\CZ^s}M\to \CZ^{\CL}\otimes_{\CZ^{\CL,s}}M$ that clearly is surjective.  Remark \ref{rem-trans} now implies that this  is an isomorphism. 
\end{proof}

\begin{lemma} \label{lemma-decZ} Suppose that $\Lambda\subset\CA_T$ is a union of connected components with the property $\Lambda\cap \Lambda s=\emptyset$. Then the composition $\CZ^{\ol{\Lambda\cup\Lambda s},s}\subset\CZ^{\ol{\Lambda\cup\Lambda s}}=\CZ^{\ol\Lambda}\oplus\CZ^{\ol{\Lambda s}}\xrightarrow{pr_{\ol\Lambda}}\CZ^{\ol\Lambda}$ is an isomorphism of $T$-algebras. 
\end{lemma}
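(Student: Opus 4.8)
The plan is to describe the right action of $\eta_s$ on $\CZ^{\ol{\Lambda\cup\Lambda s}}=\CZ^{\ol\Lambda}\oplus\CZ^{\ol{\Lambda s}}$ explicitly and then to recognize $\CZ^{\ol{\Lambda\cup\Lambda s},s}$ as the graph of an isomorphism between the two summands, so that the composition with $pr_\Lambda$ becomes projection along one factor.

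First I would collect the elementary facts. Since $\Lambda$ is a union of connected components, so is $\Lambda s$ by Lemma \ref{lemma-comp}(1), and $\Lambda\cup\Lambda s$ is $s$-invariant because $s^2=\id$ in $\hCW$. As each connected component is $\DZ R$-stable, the hypothesis $\Lambda\cap\Lambda s=\emptyset$ forces $\ol\Lambda\cap\ol{\Lambda s}=\emptyset$. The right $\hCW$-action on $\CA$ commutes with the left action, hence with translations by $\DZ R$, so it descends to $\CV$; in particular $\ol{\Lambda s}=\ol{\Lambda}s$, and since $s$ acts on $\CV$ as an involution the right $s$-action interchanges the index sets $\ol\Lambda$ and $\ol{\Lambda s}$. (The identity $\CZ^{\ol{\Lambda\cup\Lambda s}}=\CZ^{\ol\Lambda}\oplus\CZ^{\ol{\Lambda s}}$ appearing in the statement is the decomposition of Lemma \ref{lemma-decZcc} restricted to the connected components contained in $\Lambda\cup\Lambda s$.)

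Next, unwinding $\eta_s((z_x))=(z_{xs})$ I would check that $\eta_s$ sends $(z',z'')\in\CZ^{\ol\Lambda}\oplus\CZ^{\ol{\Lambda s}}$ to $(\phi(z''),\psi(z'))$, where $\phi$ and $\psi$ are the index-relabelling maps $\phi((z_x)_{x\in\ol{\Lambda s}})=(z_{xs})_{x\in\ol\Lambda}$ and $\psi((z_x)_{x\in\ol\Lambda})=(z_{xs})_{x\in\ol{\Lambda s}}$ between the two direct summands of $\bigoplus_{x\in\ol{\Lambda\cup\Lambda s}}T$. Since $\eta_s$ preserves $\CZ^{\ol{\Lambda\cup\Lambda s}}$, applying it to the elements $(0,z'')$ and $(z',0)$ shows that $\phi$ and $\psi$ restrict to $T$-algebra homomorphisms $\CZ^{\ol{\Lambda s}}\to\CZ^{\ol\Lambda}$ and $\CZ^{\ol\Lambda}\to\CZ^{\ol{\Lambda s}}$; they are mutually inverse because $s^2=\id$. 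Hence an element $(z',z'')$ is $\eta_s$-fixed precisely when $z''=\psi(z')$, so $\CZ^{\ol{\Lambda\cup\Lambda s},s}=\{(z',\psi(z'))\mid z'\in\CZ^{\ol\Lambda}\}$ is the graph of $\psi$.

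Finally, the composition in the statement restricts $pr_\Lambda$ to this graph, which is the map $(z',\psi(z'))\mapsto z'$; it is bijective with inverse $z'\mapsto(z',\psi(z'))$ and is a $T$-algebra homomorphism since $pr_\Lambda$ is one, hence an isomorphism of $T$-algebras. I do not expect a real obstacle here; the only delicate point is the bookkeeping recording how $\eta_s$ permutes coordinates across the decomposition — equivalently, the compatibility of the right $s$-action with the orbit map $\CA\to\CV$ — and once that is in place the argument is formal.
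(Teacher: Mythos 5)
Your proposal is correct and follows essentially the same route as the paper: identify $\CZ^{\ol{\Lambda\cup\Lambda s},s}$ as the graph of the coordinate-relabelling isomorphism (your $\psi$ is the paper's $\gamma$) and observe that $pr_\Lambda$ restricted to a graph is a bijection. The only difference is presentational: where the paper simply says that the relabelling ``induces an isomorphism $\CZ^{\ol\Lambda}\xrightarrow{\sim}\CZ^{\ol{\Lambda s}}$'' by definition, you justify this via $\eta_s$ preserving $\CZ^{\ol{\Lambda\cup\Lambda s}}$ applied to $(z',0)$ and $(0,z'')$, which is a clean way to spell out the detail.
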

\begin{proof} It follows from the definition that the isomorphism $\gamma\colon\bigoplus_{x\in{\ol\Lambda}}T\to\bigoplus_{y\in\ol{\Lambda s}} T$ that maps $(z_x)$ into $(z^\prime_y)$ with $z^\prime_y=z_{ys}$ induces an isomorphism $\CZ^{\ol\Lambda}\xrightarrow{\sim} \CZ^{\ol{\Lambda s}}$ of $T$-algebras. As $\CZ^{\ol{\Lambda\cup\Lambda s},s}\subset\CZ^{\ol\Lambda}\oplus\CZ^{\ol{\Lambda s}}$ is the subset of elements $(z,\gamma(z))$, the claim follows.
\end{proof}

\subsection{$s$-invariant elements in $\CZ\otimes_{\CZ^s}M$} \label{subsec-sinv}

Let $M$ be a $\CZ$-module. Then the map $\eta_s^M\colon \CZ\otimes_{\CZ^s} M\to \CZ\otimes_{\CZ^s} M$, $z\otimes m\mapsto \eta_s(z)\otimes m$, is a $\CZ^s$-linear automorphism of $\CZ\otimes_{\CZ^s} M$.

\begin{definition} 
An element $m\in\CZ\otimes_{\CZ^s}M$ is called {\em $s$-invariant} if $\eta_s^M(m)=m$.  
\end{definition} 



\begin{lemma} \label{lemma-sinvel} Let $\CL\subset\CV$ and let $M$ be a root torsion free $\CZ$-module. \begin{enumerate}
\item $\eta_s^M$ induces an isomorphism between $(\CZ\otimes_{\CZ^s} M)^{\CL}$ and $(\CZ\otimes_{\CZ^s} M)^{\CL s}$. 
\item If $\CL\cap\CL s=\emptyset$,  then an element $(m_\CL,m_{\CL s})\in (\CZ\otimes_{\CZ^s} M)^{\CL\cup\CL s}\subset (\CZ\otimes_{\CZ^s}M)^{\CL}\oplus(\CZ\otimes_{\CZ^s}M)^{\CL s}$ is $s$-invariant if and only if $m_{\CL s}=\eta_s^M(m_\CL)$.
\end{enumerate} 
\end{lemma}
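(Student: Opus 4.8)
The plan is to prove both parts by reducing statements about $\CZ\otimes_{\CZ^s}M$ to statements about $\CZ^{\CL}\otimes_{\CZ^{\CL,s}}M$ and $\CZ^{\CL s}\otimes_{\CZ^{\CL s,s}}M$, using the canonical decomposition of $M$ and the results of Section \ref{subsec-sinvinZ}. First I would observe that since $M$ is root torsion free we may, by Lemma \ref{lemma-stalksZmod}, regard $M$ (and all its quotients $M^{\CL}$) as sub-$\CZ$-modules of $\bigoplus_{x\in\CV}M^x$, so the functors $(\cdot)^{\CL}$ are simply given by suitable projections; moreover by Remark \ref{rem-trans} the functor $\CZ^{\CL'}\otimes_{\CZ^{\CL',s}}(\cdot)$ is exact and preserves root torsion freeness for any $s$-invariant $\CL'$. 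Applying this with $\CL'=\CV$, and using Lemma \ref{lemma-transsinv} (which identifies $\CZ\otimes_{\CZ^s}M$ with $\CZ^{\CL'}\otimes_{\CZ^{\CL',s}}M$ whenever $M$ is $\CZ$-supported on $\CL'$), I get that $\CZ\otimes_{\CZ^s}M$ is itself a root torsion free $\CZ$-module, so the stalk functors $(\cdot)^{\CL}$ behave as above.

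For part (1): the automorphism $\eta_s^M$ of $\CZ\otimes_{\CZ^s}M$ is $\eta_s\otimes\id_M$, and $\eta_s$ permutes the summands $T$ in $\bigoplus_{x\in\CV}T$ according to $x\mapsto xs$. Tensoring with $M$ over $\CZ^s$ and invoking the identification of $\CZ\otimes_{\CZ^s}M$ inside $\bigoplus_{x\in\CV}(\CZ\otimes_{\CZ^s}M)^x$, I would check that $\eta_s^M$ sends the $x$-stalk to the $xs$-stalk, hence carries $(\CZ\otimes_{\CZ^s}M)^{\CL}=\bigoplus_{x\in\CL}(\cdots)^x$ isomorphically onto $(\CZ\otimes_{\CZ^s}M)^{\CL s}$; it is involutive because $\eta_s$ is. Concretely this amounts to noting that for $z=(z_x)\in\CZ$ and the quotient map $p^{\CL}$, one has $p^{\CL s}\circ\eta_s = \eta_s\circ p^{\CL}$ as maps $\CZ\to\CZ^{\CL s}$ (resp.\ $\CZ^{\CL}$), which is immediate from the formula $\eta_s(z)_x=z_{xs}$.

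For part (2): assuming $\CL\cap\CL s=\emptyset$, the set $\CL\cup\CL s$ is $s$-invariant, and $(\CZ\otimes_{\CZ^s}M)^{\CL\cup\CL s}$ splits as $(\CZ\otimes_{\CZ^s}M)^{\CL}\oplus(\CZ\otimes_{\CZ^s}M)^{\CL s}$ via the two projections. Under this splitting $\eta_s^M$ acts, by part (1), as the map $(m_\CL,m_{\CL s})\mapsto(\eta_s^M(m_{\CL s}),\eta_s^M(m_\CL))$ — here I am using both that $\eta_s^M$ swaps the two stalk-summands and that on $(\CZ\otimes_{\CZ^s}M)^{\CL\cup\CL s}$ it is the restriction of the global $\eta_s^M$, which requires checking that $\eta_s^M$ commutes with the quotient map $p^{\CL\cup\CL s}$; this follows from the $s$-invariance of $\CL\cup\CL s$ exactly as in part (1). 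Then $(m_\CL,m_{\CL s})$ is $s$-invariant, i.e.\ fixed by $\eta_s^M$, if and only if $m_\CL=\eta_s^M(m_{\CL s})$ and $m_{\CL s}=\eta_s^M(m_\CL)$; but by part (1) $\eta_s^M$ is involutive between the two summands, so the second equation implies the first, and the condition reduces to $m_{\CL s}=\eta_s^M(m_\CL)$.

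The only mildly delicate point — and hence the step I would spell out most carefully — is the compatibility of $\eta_s^M$ with the quotient maps $p^{\CL}$ (equivalently, that $\eta_s^M$ really does carry the $x$-stalk of $\CZ\otimes_{\CZ^s}M$ to the $xs$-stalk). This needs the identification $\CZ\otimes_{\CZ^s}M\cong\CZ^{\ol\Lambda\cup\ol\Lambda s}\otimes_{\CZ^{\ol\Lambda\cup\ol\Lambda s,s}}M$ on each relevant block (Lemma \ref{lemma-transsinv} and the canonical decomposition), after which the claim is the elementary observation that $\eta_s$ on $\CZ^{\CL'}\subset\bigoplus_{x\in\CL'}T$ permutes coordinates by $x\mapsto xs$, and tensoring over $\CZ^{\CL',s}$ with a root torsion free module does not disturb the stalk decomposition by Remark \ref{rem-trans}. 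Everything else is bookkeeping with direct sums.
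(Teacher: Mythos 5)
Your proof is correct, but it routes through more machinery than the paper does. The paper's own argument is one line: by associativity of tensor product one has $(\CZ\otimes_{\CZ^s}M)^{\CI}=\CZ^{\CI}\otimes_\CZ(\CZ\otimes_{\CZ^s}M)=\CZ^{\CI}\otimes_{\CZ^s}M$ for every $\CI\subset\CV$ (this does not use root torsion freeness at all), after which part (1) is simply the statement that the $\CZ^s$-algebra isomorphism $\eta_s\colon\CZ^{\CL}\to\CZ^{\CL s}$ induces, upon tensoring with $M$ over $\CZ^s$, an involutive isomorphism $\CZ^{\CL}\otimes_{\CZ^s}M\to\CZ^{\CL s}\otimes_{\CZ^s}M$, and part (2) follows by applying part (1) to the $s$-invariant set $\CL\cup\CL s$. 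You instead embed $\CZ\otimes_{\CZ^s}M$ into $\bigoplus_{x}(\CZ\otimes_{\CZ^s}M)^x$ via root torsion freeness (using Remark \ref{rem-trans} to propagate root torsion freeness from $M$ to $\CZ\otimes_{\CZ^s}M$) and argue stalkwise, invoking Lemma \ref{lemma-transsinv} and the canonical decomposition into blocks $\ol\Lambda\cup\ol{\Lambda s}$ to justify the stalk-level computation of $\eta_s^M$. This works — and you do correctly pin down the genuinely load-bearing compatibility $p^{\CL s}\circ\eta_s=\eta_s\circ p^{\CL}$ on $\CZ$ — but the detour through stalks and the block decomposition is unnecessary; the tensor identity does all the work, and it holds for an arbitrary $\CZ$-module $M$. (Two small slips worth noting: you write $(\CZ\otimes_{\CZ^s}M)^{\CL}=\bigoplus_{x\in\CL}(\cdots)^x$, where only an inclusion holds — $(\CZ\otimes_{\CZ^s}M)^{\CL}$ is the image of $\CZ\otimes_{\CZ^s}M$ in that direct sum, not the whole thing; and Lemma \ref{lemma-transsinv} as stated requires $M$ to be $\CZ$-supported on an $s$-invariant set, so you do need the grouping into $s$-invariant blocks $\ol\Lambda\cup\ol{\Lambda s}$, which you only gesture at. Neither slip is fatal, but both are symptoms of the argument being heavier than it needs to be.)
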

\begin{proof} As $(\CZ\otimes_{\CZ^s}M)^{\CI}=\CZ^{\CI}\otimes_{\CZ^s} M$ the statement (1) follows from the fact that $\eta_s$ induces an  isomorphism between $\CZ^{\CL}$ and $\CZ^{\CL s}$ (of $\CZ^s$-algebras). Part (1)  implies part (2). \end{proof}

The following result is a rather technical statement that is used later on. 
\begin{lemma}\label{lemma-decm} Let $\CL=\{x,xs\}\subset\CV$ and  let $M$ be a root torsion free $\CZ$-module that is $\CZ$-supported on $\CL$.  For each $m\in\CZ\otimes_{\CZ^s} M$ there is a unique $s$-invariant element $m^\prime\in\CZ\otimes_{\CZ^s} M$ such that $m_x=m^\prime_x$.  
\end{lemma}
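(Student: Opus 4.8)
The plan is to work with the canonical decomposition $M = M^{\{x\}}\oplus M^{\{xs\}}$ (valid since $M$ is root torsion free and $\CZ$-supported on $\CL=\{x,xs\}$), which by Remark~\ref{rem-trans} and Lemma~\ref{lemma-sinvel} gives
$$
\CZ\otimes_{\CZ^s}M = \CZ^{\CL}\otimes_{\CZ^{\CL,s}}M \cong (\CZ\otimes_{\CZ^s}M)^{\{x\}}\oplus(\CZ\otimes_{\CZ^s}M)^{\{xs\}},
$$
so that any element can be written as a pair $m=(m_x,m_{xs})$ with $m_x\in(\CZ\otimes_{\CZ^s}M)^{\{x\}}$ and $m_{xs}\in(\CZ\otimes_{\CZ^s}M)^{\{xs\}}$. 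First I would dispose of the easy case $x = xs$: then $\CL=\{x\}$ is already $s$-invariant, $\eta_s^M$ is an involution on $\CZ^{\{x\}}\otimes_{\CZ^s}M$, and since $2$ is invertible the $s$-invariant part is a direct summand; one sets $m' = \tfrac12(m+\eta_s^M(m))$, and $m'_x = m_x$ follows because on the rank-one summand $\eta_s$ acts as $+1$ (indeed $\CZ^{\{x\},-s}$ projects to zero under $p^{\{x\}}$ when $xs=x$). The substance is the case $x\ne xs$.

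In that case, by Lemma~\ref{lemma-sinvel}(2), an element $(n_x,n_{xs})$ is $s$-invariant precisely when $n_{xs}=\eta_s^M(n_x)$, where $\eta_s^M$ denotes the isomorphism $(\CZ\otimes_{\CZ^s}M)^{\{x\}}\xrightarrow{\sim}(\CZ\otimes_{\CZ^s}M)^{\{xs\}}$ from part~(1). Hence I would simply \emph{define} $m' := (m_x,\eta_s^M(m_x))$. This is $s$-invariant by construction, and its $x$-component equals $m_x$ by construction. For uniqueness: if $m''$ is $s$-invariant with $m''_x = m_x$, then again by Lemma~\ref{lemma-sinvel}(2) we have $m''_{xs} = \eta_s^M(m''_x) = \eta_s^M(m_x) = m'_{xs}$, so $m'' = m'$. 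This settles existence and uniqueness at once.

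The only genuine point to verify carefully — and the step I expect to be the main obstacle in writing it cleanly — is that the decomposition $\CZ\otimes_{\CZ^s}M \cong (\CZ\otimes_{\CZ^s}M)^{\{x\}}\oplus(\CZ\otimes_{\CZ^s}M)^{\{xs\}}$ really is the canonical decomposition coming from a base ring for which $\{x\}$ and $\{xs\}$ are the relevant orbits, so that an element is indeed determined by its pair of components and Lemma~\ref{lemma-sinvel}(1)--(2) applies. This follows from Lemma~\ref{lemma-transsinv}: since $M$ is $\CZ$-supported on $\CL$, we have $\CZ\otimes_{\CZ^s}M = \CZ^{\CL}\otimes_{\CZ^{\CL,s}}M$, and $\CZ^{\CL} = \CZ^{\{x\}}\oplus\CZ^{\{xs\}}$ as $T$-modules (both free of rank one, by Lemma~\ref{lemma-subgensur} and the argument there), whence the direct sum decomposition of $\CZ^{\CL}\otimes_{\CZ^{\CL,s}}M$ into its $x$- and $xs$-stalks; one then checks that $\eta_s^M$ interchanges these two summands and that the fixed elements are exactly the diagonal pairs $(n_x,\eta_s^M(n_x))$, which is the content of Lemma~\ref{lemma-sinvel}. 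Once these identifications are in place the argument is a two-line formal manipulation; the care required is purely bookkeeping about which quotient/localization the components live in.
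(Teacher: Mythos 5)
The decomposition you rely on, $\CZ\otimes_{\CZ^s}M \cong (\CZ\otimes_{\CZ^s}M)^{\{x\}}\oplus(\CZ\otimes_{\CZ^s}M)^{\{xs\}}$, is false in exactly the cases where this lemma is needed. The canonical decomposition of a root torsion free $\CZ$-module runs over connected components of $\CA_T$, not over arbitrary $\DZ R$-orbits; when $x$ and $xs$ lie in the same component --- equivalently, when $\alpha^\vee$ (with $xs=s_\alpha x$) is not invertible in $T$, which is precisely the situation in which Lemma~\ref{lemma-decm} is applied inside Proposition~\ref{prop-sinvpreim} --- the map into the direct sum of stalks is injective but not surjective. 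Concretely, Lemma~\ref{lemma-subgensur} identifies $\CZ^{\CL}$ with the $T$-span of $(1,1)$ and $(0,\alpha^\vee)$ inside $\CZ^{\{x\}}\oplus\CZ^{\{xs\}}=T\oplus T$, and this inclusion has cokernel $T/\alpha^\vee T$; tensoring over $\CZ^{\CL,s}\cong T$ with $M$ shows that $\CZ\otimes_{\CZ^s}M\to(\CZ\otimes_{\CZ^s}M)^{\{x\}}\oplus(\CZ\otimes_{\CZ^s}M)^{\{xs\}}$ has cokernel $M/\alpha^\vee M\ne 0$. So when you write ``define $m':=(m_x,\eta_s^M(m_x))$'', whether this pair of stalks actually comes from an element of $\CZ\otimes_{\CZ^s}M$ is precisely what must be proved: Lemma~\ref{lemma-sinvel}(2) only characterizes $s$-invariant elements that already lie in $(\CZ\otimes_{\CZ^s}M)^{\CL}$; it does not say that every pair of the form $(n,\eta_s^M(n))$ does.

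Your construction does produce the right element, but for a reason your argument does not supply: under the natural identifications $(\CZ\otimes_{\CZ^s}M)^{\{x\}}\cong T\otimes_{\CZ^{\CL,s}}M\cong(\CZ\otimes_{\CZ^s}M)^{\{xs\}}$ the map $\eta_s^M$ becomes the identity, so $(m_x,\eta_s^M(m_x))$ is a diagonal pair $(a,a)$, and a pair $(a,a')$ lies in the image of $\CZ\otimes_{\CZ^s}M$ iff $a\equiv a'\mod\alpha^\vee M$. Establishing this amounts to the explicit computation in $\CZ^{\CL}\otimes_{\CZ^{\CL,s}}M$ that the paper does directly: using the $T$-basis $(1,1),(0,\alpha^\vee)$ of $\CZ^{\CL}$, write $m=(1,1)\otimes a+(0,\alpha^\vee)\otimes b$; then $(1,1)\otimes a$ is $s$-invariant and $(0,\alpha^\vee)\otimes b$ has zero $x$-stalk, and one sets $m'=(1,1)\otimes a$. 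Your uniqueness argument via Lemma~\ref{lemma-sinvel} is fine. (As an aside, the case $x=xs$ never occurs: $\CV$ is a principal homogeneous $\CW$-set, and the image of $s\in\hCS$ in $\CW\cong\hCW/\DZ R$ is a nontrivial reflection, so right multiplication by $s$ on $\CV$ is fixed-point free.)
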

\begin{proof} As $M$ is $\CZ$-supported on $\CL$  we have $\CZ\otimes_{\CZ^s}M=\CZ^\CL\otimes_{\CZ^{\CL, s}} M$ by Lemma \ref{lemma-transsinv}. Let $\alpha\in R^+$ be such that $xs=s_{\alpha} x$. By Lemma \ref{lemma-subgensur} we have $$
\CZ^{\CL}=\{(z_x,z_{xs})\in T\oplus T\mid z_x\equiv z_{xs}\mod \alpha^\vee\}
$$
and hence $\CZ^{\CL,s}=T(1,1)$.
The elements $(1,1)$ and $(0,\alpha^\vee)$ form a $T$-basis of  $\CZ^{\CL}$. So we can write $m=(1,1)\otimes a+(0,\alpha^\vee)\otimes b$ with  $a,b\in M$. Clearly, $(1,1)\otimes a$ is $s$-invariant and $((0,\alpha^\vee)\otimes b)_{x}=0$. Hence $m_x=((1,1)\otimes a)_x$. This shows the existence. 
In order to show uniqueness it suffices to show that if $n\in (\CZ\otimes_{\CZ^s} M)^{\CL}$ is $s$-invariant and such that $n_x=0$, then $n=0$.  This is a consequence of Lemma \ref{lemma-sinvel}. 
\end{proof}

\subsection{Twisting (pre-)sheaves} Suppose that $\Lambda$ is a union of connected components of $\CA_T$ that satisfies $\Lambda\cap\Lambda s=\emptyset$. The map $\gamma_s\colon \Lambda\to\Lambda s$, $A\mapsto As$ is then a homeomorphism of topological spaces by Lemma \ref{lemma-tops}. Hence the pull-back functor $\gamma_s^{\ast}$ is an equivalence between the categories of (pre-)sheaves of $\CZ$-modules on $\Lambda s$ and (pre-)sheaves of $\CZ$-modules on $\Lambda$. For a $\CZ$-module $M$ denote by $M^{s-tw}$ the $\CZ$-module that we obtain from $M$ by twisting the $\CZ$-action with the automorphism $\eta_s\colon\CZ\to\CZ$ (cf. Section \ref{subsec-sinvinZ}), i.e. $M^{s-tw}=M$ as an abelian group and $z\cdot_{M^{s-tw}} m=\eta_s(z)\cdot_M m$ for $z\in\CZ$ and $m\in M$. 

Let $\SM$ be an object in $\bP$ supported on $\Lambda s$ and denote by $\gamma_s^{[\ast]}\SM$ the presheaf of $\CZ$-modules on $\Lambda\subset\CA_T$ that we obtain from the pull-back $\gamma_s^\ast\SM$ by applying the functor $(\cdot)^{s-tw}$ to any local section. Hence, for any open subset $\CJ$ of $\Lambda$ we have
$$
(\gamma_s^{[\ast]}\SM)(\CJ)=\SM(\CJ s)^{s-tw}.
$$

\begin{lemma}\label{lemma-gamma} Let $\SM$ be an object in $\bP$ supported  on $\Lambda s$. 
\begin{enumerate}
\item The presheaf $\gamma_s^{[\ast]}\SM$ is an object in $\bP$ supported   on $\Lambda$.
\item  If $\SM$ is a sheaf, then $\gamma_s^{[\ast]}\SM$ is a sheaf as well.
\end{enumerate}
Let $T\to T^\prime$ be a flat homomorphism of base rings. 
\begin{enumerate}\setcounter{enumi}{2}
\item There is a functorial isomorphism  $(\gamma_s^{[\ast]}\SM)\boxtimes_TT^\prime\cong \gamma_s^{[\ast]}(\SM\boxtimes_TT^\prime)$.
\item If $\SM$ is an object in $\bS$, then $\gamma_s^{[\ast]}\SM$ is an object in $\bS$ as well.
\end{enumerate}

\end{lemma}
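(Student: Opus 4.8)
The plan is to prove the four statements in order, as each builds on the previous ones. For (1), I would first recall that $\gamma_s\colon\Lambda\to\Lambda s$ is a homeomorphism by Lemma \ref{lemma-tops}(2), so the underlying presheaf $\gamma_s^\ast\SM$ is flabby, finitary and (after applying $(\cdot)^{s\text{-}tw}$ sectionwise) still root torsion free, since twisting the $\CZ$-action by the algebra automorphism $\eta_s$ sends $\alpha^\vee$ to another coroot and hence preserves root torsion freeness. The point that needs care is the support condition. Here I would use that the right action of $s$ on $\CA$ is compatible with the right action on $\CV$ via $\pi$, so that for a $\DZ R$-orbit $x\subset\Lambda$ one has $\pi(\gamma_s(x)) = \pi(x)s = xs$ in $\CV$, and $\gamma_s$ restricts to a homeomorphism $x\to xs$ of the induced topologies (which are $T$-independent by Lemma \ref{lemma-gentop}). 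Combining this with the identity $(\gamma_s^{[\ast]}\SM)^x = (\SM^{xs})^{s\text{-}tw}\circ\gamma_s^\ast$ — valid because $\CZ^x\otimes_\CZ(\cdot)$ intertwines with $\eta_s$-twisting and the projection $\CZ\to\CZ^{xs}$ — reduces the support condition for $\gamma_s^{[\ast]}\SM$ at $x$ to the support condition for $\SM$ at $xs$, which holds by hypothesis. Finally $\gamma_s^{[\ast]}\SM$ is supported on $\Lambda$ as a presheaf since $\SM$ is supported on $\Lambda s$ and $\gamma_s$ is a homeomorphism $\Lambda\to\Lambda s$ which sends the complement of $\Lambda$ (where $\gamma_s^\ast$ is not defined, but we extend by zero) to the complement of $\Lambda s$.

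For (2), the key observation is that a homeomorphism of topological spaces induces an equivalence between the categories of \emph{sheaves}, not just presheaves; since $(\cdot)^{s\text{-}tw}$ is an exact functor on $\CZ$-modules that is the identity on underlying abelian groups, applying it sectionwise commutes with the sheaf axiom (equalizer of the product-of-sections diagram). Hence $\gamma_s^{[\ast]}\SM = (\SM\circ\gamma_s)^{s\text{-}tw}$ is a sheaf whenever $\SM$ is. Root reflexivity is preserved by the same token, using that $(\cdot)\otimes_T T^\alpha$ commutes with $\eta_s$-twisting. For (3), I would argue using the rigidity of Proposition \ref{prop-rig}: both sides are objects in $\bP_{T'}$ (by (1) applied over $T'$, noting $\Lambda$ is a union of $\CA_{T'}$-components by Lemma \ref{lemma-comp}(2) and still satisfies $\Lambda\cap\Lambda s=\emptyset$), so it suffices to produce a natural isomorphism on the $T$-admissible family of $s$-invariant $T$-open subsets (Lemma \ref{lemma-admtwice}) — but actually it is cleaner to restrict to $T$-open subsets $\CJ\subset\Lambda$, on which $\CJ s$ is again $T$-open, and then both functors $\boxtimes_T T'$ restrict to $\otimes_T T'$ in the sense of Remarks \ref{rem-propbox}(2) is not available, so instead I would unwind the explicit formula for $\boxtimes$ from Proposition \ref{prop-defbox}: $(\gamma_s^{[\ast]}\SM)\boxtimes_T T'(\CJ)$ is the image of $\SM((\CJ s)_{\preceq_T})^{s\text{-}tw}\otimes_T T'$ in $\bigoplus_{x}\SM(((\CJ s)_{\preceq_T}\cap xs)_{\preceq_T})^{xs,s\text{-}tw}\otimes_T T'$, and since $\gamma_s$ is a homeomorphism $\CA_{T'}\to(\CA_{T'})s$ we have $(\CJ s)_{\preceq_T}=(\CJ_{\preceq_T})s$ and likewise for the orbitwise closures, so this matches $\gamma_s^{[\ast]}(\SM\boxtimes_T T')(\CJ)$ term by term; naturality is automatic.

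Statement (4) is then immediate: if $\SM\in\bS_T$ then by definition $\SM$ is a root reflexive sheaf on $\CA_T$ and $\SM\boxtimes_T T'$ is a root reflexive sheaf on $\CA_{T'}$ for every flat $T\to T'$; by (2) $\gamma_s^{[\ast]}\SM$ is a root reflexive sheaf, and by (3) and (2) applied over $T'$, $(\gamma_s^{[\ast]}\SM)\boxtimes_T T'\cong\gamma_s^{[\ast]}(\SM\boxtimes_T T')$ is a root reflexive sheaf on $\CA_{T'}$, so $\gamma_s^{[\ast]}\SM\in\bS_{T'}$ — here one should note $\gamma_s^{[\ast]}$ is only defined on presheaves supported on $\Lambda s$, and $\SM\boxtimes_T T'$ is indeed supported on $\Lambda s$ since $\Lambda s$ is a union of $\CA_{T'}$-components. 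The main obstacle I anticipate is statement (3): tracking the base change construction through the homeomorphism $\gamma_s$ requires knowing that $\gamma_s$ is simultaneously a homeomorphism $\Lambda\to\Lambda s$ for \emph{every} base ring in the tower (which follows from Lemma \ref{lemma-tops}(2) together with Lemma \ref{lemma-comp}(2)), and that the orbitwise-closure operation $(\cdot\cap x)_{\preceq_T}$ transports correctly; everything else is a formal consequence of $(\cdot)^{s\text{-}tw}$ being an exact, grading-respecting, identity-on-objects twist and of $\gamma_s$ being a homeomorphism.
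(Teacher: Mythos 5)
Your proof follows the paper's argument in all four parts: in (1) you reduce the support condition for $\gamma_s^{[\ast]}\SM$ at $x$ to the support condition for $\SM$ at $xs$ via the homeomorphism $\gamma_s$ and the intertwining of $(\cdot)^x$ with $\eta_s$-twisting; in (2) you use that $\gamma_s$ is a homeomorphism and $(\cdot)^{s\text{-}tw}$ is exact; in (3) you compute both sides on $T$-open $\CJ\subset\Lambda$ and invoke rigidity (Proposition~\ref{prop-rig}) with the $T'$-admissible family of $T$-open sets (Lemma~\ref{lemma-Top}); and (4) follows formally. One small misstatement worth flagging: you justify preservation of root torsion freeness by saying $\eta_s$ ``sends $\alpha^\vee$ to another coroot,'' but $\eta_s$ is a $T$-algebra automorphism of $\CZ$ and fixes $\alpha^\vee\in T$ (the diagonal element); the correct and simpler reason is that $M^{s\text{-}tw}=M$ as a $T$-module and root torsion freeness (Definition~\ref{def-rtf}) depends only on the $T$-module structure. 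Also, your discussion in (3) meanders a bit before settling on the right approach, and the final ``$\gamma_s^{[\ast]}\SM\in\bS_{T'}$'' in (4) should read $\bS_T$; neither affects correctness.
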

\begin{proof} (1) Clearly $\gamma_s^{[\ast]}\SM$ is finitary, root torsion free,  flabby and supported  on $\Lambda$. We need to show that it satisfies the support condition. Let $x\in{\ol\Lambda}$. Note that $(\gamma_s^{[\ast]}\SM)^x=(\gamma_s^\ast\SM)^{xs}=\gamma_s^{\ast}(\SM^{xs})$.  As $i_{x}\circ\gamma_s=\gamma_s\circ i_{xs}$, we can identify the morphism $(\gamma_s^{[\ast]}\SM)^{x}(\CJ)\to i_{x\ast}i_x^\ast(\gamma_s^{[\ast]}\SM)^{x}(\CJ)$  with the morphism $\SM^{xs}(\CJ s)\to i_{xs\ast}i_{xs}^\ast\SM^{xs}(\CJ s)$ on the level of $\CZ^s$-modules for any open subset $\CJ$ of $\Lambda$. As $\SM$ satisfies the support condition, the latter is an isomorphism.

(2) This follows from the fact that $\gamma_s$, by Lemma \ref{lemma-tops},  is a homeomorphism of topological spaces. 

(3) Note that $\Lambda$ is also a union of connected components of $\CA_{T^\prime}$ by Lemma \ref{lemma-comp}. The presheaves $(\gamma_s^{[\ast]}\SM)\boxtimes_TT^\prime$ and $\gamma_s^{[\ast]}(\SM\boxtimes_TT^\prime)$ are both supported   on $\Lambda$. So let $\CJ\subset\Lambda$ be a $T$-open subset. Then $\CJ s\subset\Lambda s$ is $T$-open as well, and we have functorial identifications
\begin{align*}
((\gamma_s^{[\ast]}\SM)\boxtimes_TT^\prime)(\CJ)&=(\gamma_s^{[\ast]}\SM)(\CJ)\otimes_TT^\prime\\
&=\SM(\CJ s)^{s-tw}\otimes_TT^\prime\\
&=(\SM\otimes_TT^\prime)(\CJ s)^{s-tw}\\
&=(\SM\boxtimes_TT^\prime)(\CJ s)^{s-tw}\\
&=\gamma_s^{[\ast]}(\SM\boxtimes_TT^\prime)(\CJ)
\end{align*}
that are compatible with the restriction homomorphisms. 
As the set of $T$-open subsets in $\CA$ is $T^\prime$-admissible by Lemma \ref{lemma-Top}, the statement follows from Proposition \ref{prop-rig}.

(4) Clearly $\gamma_s^{[\ast]}\SM$ is root reflexive. Now the claim follows from this fact together with (2) and (3).
\end{proof}

\section{Wall crossing  functors}\label{sec-wcf}
In this  section we construct wall crossing functors on $\bP_T$ and show that they preserve the subcategory $\bS_T$ (they do not preserve the sheaf property in general). Wall crossing functors are of major importance in the companion article \cite{FieLanModRep}, where they are used to show that there are enough projective objects in $\bS_T$ and to link $\bS_T$ to the category defined by Andersen, Jantzen and Soergel.

\subsection{Characterization of the wall crossing functor}
  Fix a base ring $T$ and $s\in\hCS$.
Recall that for a $T$-open subset $\CJ$ the set $\CJ^\sharp=\CJ\cup\CJ s$ is $T$-open again by Lemma \ref{lemma-Js}. Let $\SM$ be an object in $\bP$. Denote by $\epsilon_s\SM$ the presheaf of $\CZ$-modules on $\CA_T$ given by 
$$
(\epsilon_s\SM)(\CJ):=\CZ\otimes_{\CZ^s}\SM(\CJ^\sharp)
$$
for $\CJ$ open in $\CA_T$, with restriction homomorphism  $\id_{\CZ}\otimes r^{\CJ^{\prime\sharp}}_{\CJ^\sharp}$  for an inclusion $\CJ^\prime\subset\CJ$. 
Then $\epsilon_s\SM$ is root torsion free  by  Remark \ref{rem-trans}  and it is flabby as the functor $\CZ\otimes_{\CZ^s}\cdot$ is right exact on the category of $\CZ$-modules. Clearly $\epsilon_s\SM$  is finitary, but in general it does not satisfy the support condition. Define $$\vartheta_s\SM:=(\epsilon_s\SM)^+.$$ This is now an object in $\bP$ and we obtain a functor $\vartheta_s\colon\bP\to\bP$ which is called the {\em wall crossing functor} associated with $s$. The natural transformation $\id\to(\cdot)^+$ induces a natural transformation $\rho_s\colon\epsilon_s\to\vartheta_s$. 

\begin{theorem}\label{thm-main} Let $s\in\hCS$. The functor $\vartheta_s\colon\bP\to\bP$ is the  up to isomorphism unique functor  that admits a natural transformation $\rho_s\colon \epsilon_s\to\vartheta_s$ such that for every object $\SM$ in $\bP$ and any $s$-invariant open subset $\CJ$, the homomorphism $\rho_s^\SM(\CJ)\colon \epsilon_s\SM(\CJ)\to\vartheta_s\SM(\CJ)$ is an isomorphism of $\CZ$-modules. 
\end{theorem}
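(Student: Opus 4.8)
The key point is that $\vartheta_s = (\epsilon_s \cdot)^+$ comes with the canonical natural transformation $\rho_s$ obtained by composing $\epsilon_s$ with $\gamma\colon \id \to (\cdot)^+$, so the existence half of the statement is essentially already established: the only thing to verify is that $\rho_s^\SM(\CJ)$ is an isomorphism when $\CJ$ is $s$-invariant. The plan is to reduce, via the canonical decomposition of Lemma \ref{lemma-supp}, to the case where $\SM$ is supported on a single connected component $\Lambda$, and then to split further according to whether $\Lambda = \Lambda s$ or $\Lambda \ne \Lambda s$. In the latter case $\CJ^\sharp = \CJ \sqcup \CJ s$ and $\CJ^\sharp \cap x = \CJ \cap x$ (or $= \CJ s \cap x$) already equals its own $T$-open closure intersected with $x$ for each $\DZ R$-orbit $x$, so $\epsilon_s\SM$ satisfies the support condition on $\CJ$ directly from Lemma \ref{lemma-decm}/Lemma \ref{lemma-transsinv}; the real content is the case $\Lambda = \Lambda s$. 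There, for an $s$-invariant open $\CJ$ one checks using Lemma \ref{lemma-tops}(1a) that for every $\DZ R$-orbit $x$ the set $\CJ^\sharp \cap x$ is again $s$-invariant and satisfies $(\CJ^\sharp \cap x)_{\preceq_T}^\sharp = (\CJ^\sharp \cap x)_{\preceq_T}$, which combined with Lemma \ref{lemma-decm} (applied to the $\CZ$-module $\SM((\CJ^\sharp\cap x)_{\preceq_T})^x$, which is $\CZ$-supported on $\{x, xs\}$) shows that $\epsilon_s\SM(\CJ) = \CZ \otimes_{\CZ^s}\SM(\CJ^\sharp)$ already injects into $\bigoplus_x (\epsilon_s\SM(\CJ))^x$ via the maps induced by restriction — i.e. $\gamma^{\epsilon_s\SM}(\CJ)$ is injective, hence an isomorphism onto $(\epsilon_s\SM)^+(\CJ) = \vartheta_s\SM(\CJ)$.

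For the uniqueness half, suppose $(\vartheta'_s, \rho'_s)$ is another such pair. The family of $s$-invariant $T$-open subsets of $\CA$ is $T$-admissible by Lemma \ref{lemma-rigsinv}, so by the rigidity Proposition \ref{prop-rig} a morphism between objects of $\bP$ is determined by its restriction to this family. For each $s$-invariant open $\CJ$ we have isomorphisms $\rho_s^\SM(\CJ)\colon \epsilon_s\SM(\CJ) \xrightarrow{\sim} \vartheta_s\SM(\CJ)$ and $\rho_s'^\SM(\CJ)\colon \epsilon_s\SM(\CJ) \xrightarrow{\sim} \vartheta'_s\SM(\CJ)$, whose composite $\rho_s'^\SM(\CJ)\circ(\rho_s^\SM(\CJ))^{-1}$ gives a compatible system of isomorphisms on the $T$-admissible family; by Proposition \ref{prop-rig} this extends uniquely to an isomorphism $\vartheta_s\SM \xrightarrow{\sim} \vartheta'_s\SM$ of presheaves, and one checks naturality in $\SM$ using the uniqueness clause of Proposition \ref{prop-rig} again (or directly, since all the maps in sight are induced by the natural transformations $\rho_s, \rho'_s$). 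This produces the required isomorphism of functors intertwining $\rho_s$ and $\rho'_s$.

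I expect the main obstacle to be the case $\Lambda = \Lambda s$ in the existence part: one must show that $\epsilon_s\SM$ already satisfies the support condition over an $s$-invariant open set, which is a statement about how $(\CJ^\sharp \cap x)_{\preceq_T}$ interacts with the reflection $s$ and with the decomposition of $\CZ \otimes_{\CZ^s}\SM(-)$ into stalks. The technical heart is precisely Lemma \ref{lemma-decm}, which guarantees that the $s$-invariance built into $\epsilon_s\SM(\CJ) = \CZ \otimes_{\CZ^s}\SM(\CJ^\sharp)$ forces an element to be recoverable from its stalks; one has to feed it the right $\CZ$-module (namely the $x$-local data $\SM((\CJ^\sharp\cap x)_{\preceq_T})^x$, using that $\SM$ satisfies the support condition so that $\SM(\CJ^\sharp)^x = \SM((\CJ^\sharp\cap x)_{\preceq_T})^x$) and combine it with Lemma \ref{lemma-tops} to control $(\CJ^\sharp \cap x)_{\preceq_T}$. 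Once that compatibility is in place everything else is formal manipulation of the functor $(\cdot)^+$, its universal property, and the rigidity Proposition \ref{prop-rig}.
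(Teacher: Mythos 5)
Your uniqueness argument matches the paper exactly: rigidity (Proposition \ref{prop-rig}) over the $T$-admissible family of $s$-invariant open sets (Lemma \ref{lemma-rigsinv}) determines the functor. The overall strategy for existence is also right in outline — by Lemma \ref{lemma-inj} it suffices to show that $\gamma^{\epsilon_s\SM}(\CJ)\colon \epsilon_s\SM(\CJ)\to\bigoplus_x\epsilon_s\SM((\CJ\cap x)_{\preceq_T})^x$ is injective for $s$-invariant $\CJ$. But the details you sketch for the $\Lambda=\Lambda s$ case contain genuine errors.

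First, the claim that $\CJ^\sharp\cap x$ is $s$-invariant cannot hold: $(\CJ^\sharp\cap x)s\subset xs$, and $xs\cap x=\emptyset$, so a nonempty subset of $x$ is never $s$-invariant. Presumably you meant to claim $(\CJ\cap x)_{\preceq_T}$ is $s$-invariant (note $\CJ^\sharp=\CJ$ since $\CJ$ is already $s$-invariant), but this is also false. For example take $C\in xs$ with $Cs\preceq_T C$ and $\CJ=\{\preceq_T C\}$; then $\CJ$ is $s$-invariant by Lemma \ref{lemma-tops}(1a), $C\in\CJ\cap xs$, but $C\notin(\CJ\cap x)_{\preceq_T}$. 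What is actually true — and what drives the paper's proof — is the strictly weaker identity $((\CJ\cap x)_{\preceq_T})^\sharp=(\CJ\cap x)_{\preceq_T}\cup(\CJ\cap xs)_{\preceq_T}$, and this set is in general a proper enlargement of $(\CJ\cap x)_{\preceq_T}$.

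Second, you propose to feed $\SM((\CJ^\sharp\cap x)_{\preceq_T})^x$ into Lemma \ref{lemma-decm}, asserting it is $\CZ$-supported on $\{x,xs\}$. It is not: by definition $(\cdot)^x=\CZ^x\otimes_\CZ(\cdot)$ produces a module supported on $\{x\}$. Lemma \ref{lemma-decm} requires $\CZ$-support on the pair $\{x,xs\}$, so it does not apply. In fact Lemma \ref{lemma-decm} is the wrong tool here entirely — it is used later for Proposition \ref{prop-sinvpreim}, not for this theorem. The paper's existence argument instead proceeds uniformly (no case split on $\Lambda$ vs. $\Lambda s$): with $\tCJ_x:=((\CJ\cap x)_{\preceq_T})^\sharp$, the support condition via Lemma \ref{lemma-suppcond} gives that $\ker(\SM(\CJ)\to\SM(\tCJ_x)^{x,xs})$ is $\CZ$-supported off $\{x,xs\}$, hence $\SM(\CJ)\to\bigoplus_{\{x,xs\}}\SM(\tCJ_x)^{x,xs}$ is injective; one then applies the exact functor $\CZ\otimes_{\CZ^s}(\cdot)$ (Remark \ref{rem-trans}), commutes it past $(\cdot)^{x,xs}$ using Lemma \ref{lemma-transsinv}, and finally refines $N^{x,xs}\hookrightarrow N^x\oplus N^{xs}$. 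You will want to replace the Lemma \ref{lemma-decm}-based step with this chain; as written your argument has a genuine gap at its technical heart.
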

\begin{proof} Note that the uniqueness statement follows from Proposition \ref{prop-rig} and the fact that the set of $s$-invariant open subsets in $\CA_T$ is $T$-admissible  by Lemma \ref{lemma-rigsinv}.  It remains to show the following: Let  $\SM$ be an object in $\bP$ and $\CJ$ an $s$-invariant open subset of $\CA_T$. Then  $\rho_s^\SM(\CJ)\colon\epsilon_s\SM(\CJ)\to\vartheta_s\SM(\CJ)$  is an isomorphism. Using Lemma \ref{lemma-inj} this translates into the statement that the natural map 
$$
\epsilon_s\SM(\CJ)\to\bigoplus_{x\in\CV}\epsilon_s\SM((\CJ\cap x)_{\preceq_T})^x$$ (direct sum of restrictions and surjections onto stalks) is injective. 

Let  $x\in\CV$. Now $\tCJ_x:=((\CJ\cap x)_{\preceq_T})^\sharp$ is the smallest $s$-invariant open subset that contains $\CJ\cap x$. Since  $\CJ$ is $s$-invariant,    $\tCJ_x=(\CJ\cap x)_{\preceq_T}\cup(\CJ\cap xs)_{\preceq_T}$.  Then $\tCJ_x=\tCJ_{xs}$. By definition, 
$
\epsilon_s\SM((\CJ\cap x)_{\preceq_T})^x=\left(\CZ\otimes_{\CZ^s}\SM(\tCJ_x)\right)^x.
$
Note that  $\pi(\CJ\setminus\tCJ_x)\subset\CV\setminus\{x,xs\}$ as $\tCJ_x$ contains $\CJ\cap x$ and $\CJ\cap xs$. By Lemma \ref{lemma-suppcond}, the kernel of the restriction $\SM(\CJ)\to\SM(\tCJ_x)$ is hence $\CZ$-supported on $\CV\setminus\{x,xs\}$. As the kernel of $\SM(\tCJ_x)\to\SM(\tCJ_x)^{x,xs}$ is supported on $\CV\setminus\{x,xs\}$ as well, so  is the kernel of the composition  $\SM(\CJ)\to\SM(\tCJ_x)\to\SM(\tCJ_x)^{x,xs}$. Taking the direct sum of all these homomorphisms yields hence an {\em injective} homomorphism
$$
\SM(\CJ)\to\bigoplus_{\{x,xs\}\in\CV/s}\SM(\tCJ_x)^{x,xs}.
$$
 As the functor $\CZ\otimes_{\CZ^s}(\cdot)$ is left exact on the category of $\CZ$-modules, by Remark \ref{rem-trans}, also the homomorphism
$$
\CZ\otimes_{\CZ^s}\SM(\CJ)\to\bigoplus_{\{x,xs\}\in\CV/s}\CZ\otimes_{\CZ^s}(\SM(\tCJ_x)^{x,xs})
$$
is injective. As $\{x,xs\}$ is an $s$-invariant subset of $\CV$, Lemma \ref{lemma-transsinv} implies that $\CZ\otimes_{\CZ^s}(\SM(\tCJ_x)^{x,xs})=(\CZ\otimes_{\CZ^s}\SM(\tCJ_x))^{x,xs}$. The canonical homomorphism $N^{x,xs}\to N^x\oplus N^{xs}$ is injective for any root torsion free $\CZ$-module $N$, so the composition 
$$
\CZ\otimes_{\CZ^s}\SM(\CJ)\to \bigoplus_{\{x,xs\}\in\CV/s}(\CZ\otimes_{\CZ^s}\SM(\tCJ_x))^{x,xs}\to \bigoplus_{x\in\CV}(\CZ\otimes_{\CZ^s}\SM(\tCJ_x))^{x}
$$
is injective. But this is the homomorphism $\epsilon_s\SM(\CJ)\to\bigoplus_{x\in\CV}\epsilon_s\SM((\CJ\cap x)_{\preceq_T})^x$.
\end{proof}

\subsection{Wall crossing and base change}
Suppose that $T\to T^\prime$ is a flat homomorphism of base rings. We have two wall crossing functors $\vartheta_{T,s}\colon \bP_T\to\bP_T$ and $\vartheta_{T^\prime,s}\colon\bP_{T^\prime}\to\bP_{T^\prime}$ and a base change functor $\cdot\boxtimes_TT^\prime\colon\bP_T\to\bP_{T^\prime}$. 
\begin{lemma} \label{lemma-wcbc} The wall crossing functors commute naturally with base change, i.e. $\vartheta_{T,s}(\cdot)\boxtimes_TT^\prime\cong\vartheta_{T^\prime,s}(\cdot\boxtimes_TT^\prime)\colon\bP_T\to\bP_{T^\prime}$. 
\end{lemma}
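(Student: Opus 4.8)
The plan is to exploit the rigidity result (Proposition~\ref{prop-rig}) together with the fact that the family of $s$-invariant $T$-open subsets of $\CA$ is $T'$-admissible, which is exactly Lemma~\ref{lemma-admtwice}. So it suffices to produce, functorially in an object $\SM$ of $\bP_T$, an isomorphism $\vartheta_{T,s}(\SM)\boxtimes_TT'\cong\vartheta_{T',s}(\SM\boxtimes_TT')$ of $\CZ_{T'}$-modules on each $s$-invariant $T$-open subset $\CJ$, compatibly with restriction for inclusions of such sets. Once the two presheaves are identified on this $T'$-admissible family, Proposition~\ref{prop-rig} upgrades this to an isomorphism of presheaves (both sides lie in $\bP_{T'}$: the left side by Proposition~\ref{prop-defbox}, the right by Theorem~\ref{thm-main}), and naturality in $\SM$ follows from the uniqueness part of Proposition~\ref{prop-rig} as well.

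First I would reduce to a statement about $\epsilon_s$ rather than $\vartheta_s$. By Theorem~\ref{thm-main}, on an $s$-invariant $T$-open set $\CJ$ the transformation $\rho_s^\SM(\CJ)\colon\epsilon_{T,s}\SM(\CJ)\to\vartheta_{T,s}\SM(\CJ)$ is an isomorphism; since $\CJ$ is then also $T'$-open (Remark~\ref{rem-ordorb}) and $s$-invariant, the defining property of $\boxtimes$ in Proposition~\ref{prop-defbox} gives $\bigl(\vartheta_{T,s}\SM\bigr)\boxtimes_TT'(\CJ)\cong\vartheta_{T,s}\SM(\CJ)\otimes_TT'\cong\epsilon_{T,s}\SM(\CJ)\otimes_TT'$. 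On the other hand $\CJ^\sharp=\CJ$, so by definition $\epsilon_{T,s}\SM(\CJ)=\CZ_T\otimes_{\CZ_T^s}\SM(\CJ)$, and applying $\otimes_TT'$ and using that $\CZ_{T'}=\CZ_T\otimes_TT'$ (so also $\CZ_{T'}^s=\CZ_T^s\otimes_TT'$, since $s$-invariants commute with the flat base change $T\to T'$, $2$ being invertible) we get
\[
\epsilon_{T,s}\SM(\CJ)\otimes_TT'\;\cong\;\CZ_{T'}\otimes_{\CZ_{T'}^s}\bigl(\SM(\CJ)\otimes_TT'\bigr)\;=\;\CZ_{T'}\otimes_{\CZ_{T'}^s}(\SM\boxtimes_TT')(\CJ),
\]
where in the last equality I use once more the defining property of $\boxtimes$ on the $T$-open ($=s$-invariant, hence $T'$-open) set $\CJ$. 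Since $\CJ$ is $s$-invariant this last module is exactly $\epsilon_{T',s}(\SM\boxtimes_TT')(\CJ)$, which by Theorem~\ref{thm-main} (applied over $T'$) equals $\vartheta_{T',s}(\SM\boxtimes_TT')(\CJ)$.

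Stringing these isomorphisms together yields the desired identification $\bigl(\vartheta_{T,s}\SM\bigr)\boxtimes_TT'(\CJ)\cong\vartheta_{T',s}(\SM\boxtimes_TT')(\CJ)$; each step is manifestly natural in $\SM$ and compatible with the restriction maps (all of them are built from restriction in $\SM$, extension of scalars, and the canonical identity $\id_{\CZ}\otimes r^{\CJ'^\sharp}_{\CJ^\sharp}$, and for $s$-invariant $\CJ'\subset\CJ$ we have $\CJ'^\sharp=\CJ'$, $\CJ^\sharp=\CJ$). Then Proposition~\ref{prop-rig}, with the $T'$-admissible family of Lemma~\ref{lemma-admtwice}, produces the natural isomorphism of functors $\bP_T\to\bP_{T'}$.

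The main obstacle, I expect, is purely bookkeeping: one must check that the extension-of-scalars functor genuinely commutes with the formation of $s$-invariants $\CZ\mapsto\CZ^s$ and with the functor $(\cdot)^+$ (the latter is already noted parenthetically in the proof of Proposition~\ref{prop-defbox}), and one must be careful that the isomorphism produced on $\CJ$ does not secretly depend on choices so that compatibility with restriction is automatic. There is no hard analytic or combinatorial input beyond what is already in the excerpt; the content is entirely in assembling Proposition~\ref{prop-rig}, Theorem~\ref{thm-main}, Proposition~\ref{prop-defbox} and Lemma~\ref{lemma-admtwice} in the right order.
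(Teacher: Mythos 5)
Your proposal is correct and follows the paper's own proof step for step: both reduce to $\epsilon_s$ on $s$-invariant $T$-open sets via Theorem~\ref{thm-main}, use the defining property of $\boxtimes_T T'$ from Proposition~\ref{prop-defbox} and the compatibility $\CZ_{T'}\otimes_{\CZ_{T'}^s}(-)\cong(\CZ_T\otimes_{\CZ_T^s}(-))\otimes_TT'$, and then glue via Lemma~\ref{lemma-admtwice} and Proposition~\ref{prop-rig}. The only difference is that you spell out the justification for commuting $s$-invariants with the flat base change $T\to T'$, which the paper leaves implicit.
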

\begin{proof} Let $\SM$ be an object in $\bP_T$ and let $\CJ$ be an $s$-invariant $T$-open subset. Then
\begin{align*}
((\vartheta_{T,s}\SM)\boxtimes_TT^\prime)(\CJ)&=(\vartheta_{T,s}\SM)(\CJ)\otimes_TT^\prime\\
&=(\CZ_T\otimes_{\CZ_T^s}\SM(\CJ))\otimes_TT^\prime\\
&=\CZ_{T^\prime}\otimes_{\CZ_{T^\prime}^s}(\SM(\CJ)\otimes_TT^\prime)\\
&=\CZ_{T^\prime}\otimes_{\CZ_{T^\prime}^s}(\SM\boxtimes_TT^\prime)(\CJ)\\
&=\vartheta_{T^\prime,s}(\SM\boxtimes_TT^\prime)(\CJ).
\end{align*}
The set of $s$-invariant open subsets in $\CA_T$ is $T^{\prime}$-admissible by Lemma \ref{lemma-admtwice}. Hence the statement of the lemma follows from Proposition \ref{prop-rig}.
\end{proof}

\subsection{Wall crossing functors in the case $\Lambda\cap\Lambda s=\emptyset$}

Let $\Lambda$ be a union of connected components of $\CA_T$ and suppose $\Lambda\cap \Lambda s=\emptyset$.

\begin{lemma}\label{lemma-decgen}   Let $\SM$ be an object in $\bP$. Then 
$$
(\vartheta_s\SM)^{\ol\Lambda}\cong\SM^{\ol\Lambda}\oplus \gamma_s^{[\ast]}(\SM^{\ol{\Lambda s}}).
$$
\end{lemma}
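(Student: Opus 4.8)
The strategy is to prove the isomorphism by evaluating both sides on an admissible family of open subsets and invoking the rigidity result, Proposition~\ref{prop-rig}. Since $\Lambda\cap\Lambda s=\emptyset$, the union $\Lambda\cup\Lambda s$ is a union of connected components, and by Lemma~\ref{lemma-supp} all the presheaves in sight decompose compatibly along connected components. It therefore suffices to work over $\ol{\Lambda\cup\Lambda s}$, i.e.\ to assume $\CV=\ol\Lambda\,\dot\cup\,\ol{\Lambda s}$. I would pick the $T$-admissible family of $s$-invariant $T$-open subsets (admissible by Lemma~\ref{lemma-rigsinv}); on such sets $\CJ$ Theorem~\ref{thm-main} gives $(\vartheta_s\SM)(\CJ)=\CZ\otimes_{\CZ^s}\SM(\CJ)$, so I only have to produce a natural isomorphism $\bigl(\CZ\otimes_{\CZ^s}\SM(\CJ)\bigr)^{\ol\Lambda}\cong \SM(\CJ)^{\ol\Lambda}\oplus\bigl(\gamma_s^{[\ast]}(\SM^{\ol{\Lambda s}})\bigr)(\CJ)$ for every $s$-invariant $T$-open $\CJ$, compatibly with restrictions.

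For the algebraic core, fix an $s$-invariant $\CJ$ and set $N=\SM(\CJ)$, a root torsion free $\CZ$-module which by Lemma~\ref{lemma-supp}/the support condition is $\CZ$-supported on $\ol{\Lambda\cup\Lambda s}=\ol\Lambda\,\dot\cup\,\ol{\Lambda s}$. By Lemma~\ref{lemma-transsinv}, $\CZ\otimes_{\CZ^s}N=\CZ^{\ol{\Lambda\cup\Lambda s}}\otimes_{\CZ^{\ol{\Lambda\cup\Lambda s},s}}N$. Now Lemma~\ref{lemma-decZ} identifies $\CZ^{\ol{\Lambda\cup\Lambda s},s}$ with $\CZ^{\ol\Lambda}$ (via projection to the $\Lambda$-component), and under this identification $\CZ^{\ol{\Lambda\cup\Lambda s}}=\CZ^{\ol\Lambda}\oplus\CZ^{\ol{\Lambda s}}$ becomes free of rank $2$ over $\CZ^{\ol\Lambda}$, the two summands being the graphs $z\mapsto(z,\gamma(z))$ of the $\eta_s$-twist isomorphism $\gamma\colon\CZ^{\ol\Lambda}\xrightarrow{\sim}\CZ^{\ol{\Lambda s}}$ and $z\mapsto(z,-\gamma(z))$ (using that $2$ is invertible). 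Taking the $\ol\Lambda$-stalk, i.e.\ applying $(\cdot)^{\ol\Lambda}=\CZ^{\ol\Lambda}\otimes_{\CZ}(\cdot)$, kills the $\CZ^{\ol{\Lambda s}}$-factor and I get
\[
\bigl(\CZ\otimes_{\CZ^s}N\bigr)^{\ol\Lambda}\;\cong\; N^{\ol\Lambda}\;\oplus\;N^{\ol{\Lambda s}},
\]
the first summand coming from the diagonal $(1,1)$ and the second from $(0,\text{anti})$, exactly as in the proof of Lemma~\ref{lemma-decm} but now globally across $\Lambda$. Finally one must recognize the second summand: $\SM(\CJ)^{\ol{\Lambda s}}$ with its residual $\CZ^{\ol\Lambda}$-action (obtained by transporting the $\CZ^{\ol{\Lambda s}}$-action through $\gamma=\eta_s$) is by definition $\bigl(\gamma_s^{[\ast]}(\SM^{\ol{\Lambda s}})\bigr)(\CJ)=\SM^{\ol{\Lambda s}}(\CJ s)^{s\text{-}tw}$, because $\CJ$ is $s$-invariant so $\CJ s=\CJ$. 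This gives the claimed decomposition on each $\CJ$ in the admissible family.

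The last point to check is naturality in $\CJ$: the identifications above are built entirely from the canonical $\CZ$-module decompositions and the fixed algebra isomorphisms of Lemmas~\ref{lemma-decZ} and~\ref{lemma-transsinv}, all of which are compatible with the restriction homomorphisms $r_\CJ^{\CJ'}$ (which are $\CZ$-linear and $s$-equivariant). Hence the collection of isomorphisms for $\CJ$ in the $s$-invariant family assembles into the data to which Proposition~\ref{prop-rig} applies, yielding the desired isomorphism of presheaves $(\vartheta_s\SM)^{\ol\Lambda}\cong\SM^{\ol\Lambda}\oplus\gamma_s^{[\ast]}(\SM^{\ol{\Lambda s}})$. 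The main obstacle I anticipate is purely bookkeeping: correctly matching the two $\CZ^{\ol\Lambda}$-module structures on $\SM(\CJ)^{\ol{\Lambda s}}$ — the one coming from the decomposition of $\CZ\otimes_{\CZ^s}(\cdot)$ and the one defining $\gamma_s^{[\ast]}$ — and confirming these agree on the nose (including grading shifts, via Lemma~\ref{lemma-lstrucfree}) rather than merely up to a non-canonical twist; once the $\eta_s$-twist identification $\CZ^{\ol\Lambda}\cong\CZ^{\ol{\Lambda s}}$ is pinned down, everything else is formal.
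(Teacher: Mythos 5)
Your proposal is correct and follows essentially the same route as the paper: reduce via Proposition~\ref{prop-rig} to $s$-invariant $T$-open sets, use $\vartheta_s\SM(\CJ)=\CZ\otimes_{\CZ^s}\SM(\CJ)$ there, and then apply Lemma~\ref{lemma-transsinv} and Lemma~\ref{lemma-decZ} to decompose $\CZ^{\ol\Lambda}\otimes_{\CZ^{\ol{\Lambda\cup\Lambda s},s}}(\SM^{\ol\Lambda}(\CJ)\oplus\SM^{\ol{\Lambda s}}(\CJ))$ as $\SM^{\ol\Lambda}(\CJ)\oplus\SM^{\ol{\Lambda s}}(\CJ)^{s\text{-}tw}=\SM^{\ol\Lambda}(\CJ)\oplus\gamma_s^{[\ast]}(\SM^{\ol{\Lambda s}})(\CJ)$. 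The only small wobble is the aside invoking the invariant/anti-invariant basis $(1,1)$, $(1,-\gamma(1))$ as in Lemma~\ref{lemma-decm} --- that parameterization is for the subgeneric case $\Lambda=\Lambda s$, whereas here the cleaner (and what the paper uses) splitting is by the connected-component idempotents of $\CZ^{\ol{\Lambda\cup\Lambda s}}=\CZ^{\ol\Lambda}\oplus\CZ^{\ol{\Lambda s}}$; but you recover from this by transporting the action through $\eta_s$ at the end, so the argument stands.
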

\begin{proof} As $\vartheta_s\SM$, $\SM$ and $\gamma_s^{[\ast]}(\SM^{\ol{\Lambda s}})$ are objects in $\bP$, it is sufficient, by Proposition \ref{prop-rig}, to establish an isomorphism 
$$
(\vartheta_s\SM)^{\ol\Lambda}(\CJ)\cong\SM^{\ol\Lambda}(\CJ)\oplus\gamma_s^{[\ast]}(\SM^{\ol{\Lambda s}})(\CJ)
$$
for any $s$-invariant $T$-open subset $\CJ$ in a way that  is compatible with restrictions. 
So let $\CJ$ be $s$-invariant. We have functorial identifications
\begin{align*}
(\vartheta_s\SM)^{\ol\Lambda}(\CJ)&=(\vartheta_s\SM)(\CJ)^{\ol\Lambda}=(\epsilon_s\SM)(\CJ)^{\ol\Lambda}\\
&=(\CZ\otimes_{\CZ^s}\SM(\CJ))^{\ol\Lambda}\\
&=\CZ^{\ol\Lambda}\otimes_{\CZ^s}\SM(\CJ)\\
&=\CZ^{\ol\Lambda}\otimes_{\CZ^{\ol{\Lambda\cup\Lambda s},s}}(\SM^{\ol\Lambda}(\CJ)\oplus\SM^{\ol{\Lambda s}}(\CJ))\\
&=\SM^{\ol\Lambda}(\CJ)\oplus\SM^{\ol{\Lambda s}}(\CJ)^{s-tw}\\
&=\SM^{\ol\Lambda}(\CJ)\oplus\gamma_s^{[\ast]}(\SM^{\ol{\Lambda s}})(\CJ).
\end{align*}
In the sixth  equation we used Lemma \ref{lemma-decZ} (note that $\eta_s$ induces an isomorphism $\CZ^{\ol\Lambda}\xrightarrow{\sim}\CZ^{\ol{\Lambda s}}$ of $T$-algebras, so $\SM^{\ol{\Lambda s}}(\CJ)^{s-tw}$ is a $\CZ^{\ol\Lambda}$-module).
\end{proof}

\begin{lemma}\label{lemma-subgensheaf}
 If $\SM$ is an object in $\bS$ that is supported on a union $\Lambda$ of connected components that satisfies $\Lambda\cap\Lambda s=\emptyset$, then $\vartheta_s\SM$ is an object in $\bS$ again.
\end{lemma}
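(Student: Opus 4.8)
The plan is to reduce the statement to the already-established facts about the twisting functor $\gamma_s^{[\ast]}$ and the decomposition of $\vartheta_s$ on components satisfying $\Lambda\cap\Lambda s=\emptyset$. Since $\SM$ is supported on $\Lambda$, its canonical decomposition collapses to $\SM=\SM^{\ol\Lambda}$ (all other $\CZ^{\ol\Omega}$-components vanish), and similarly $\SM^{\ol{\Lambda s}}=0$. Applying Lemma \ref{lemma-decgen} to each connected component, we get that $(\vartheta_s\SM)^{\ol\Omega}\cong\SM^{\ol\Omega}\oplus\gamma_s^{[\ast]}(\SM^{\ol{\Omega s}})$ for every component $\Omega$, so the only nonzero summands of $\vartheta_s\SM$ are $(\vartheta_s\SM)^{\ol\Lambda}\cong\SM^{\ol\Lambda}=\SM$ (since $\SM^{\ol{\Lambda s}}=0$) and $(\vartheta_s\SM)^{\ol{\Lambda s}}\cong\gamma_s^{[\ast]}(\SM^{\ol\Lambda})=\gamma_s^{[\ast]}\SM$. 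Hence $\vartheta_s\SM\cong\SM\oplus\gamma_s^{[\ast]}\SM$ as objects in $\bP_T$.

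Given this direct sum decomposition, membership in $\bS_T$ is then checked summand by summand. First, $\SM$ itself is in $\bS_T$ by hypothesis. Second, $\gamma_s^{[\ast]}\SM$ is in $\bS_T$ by Lemma \ref{lemma-gamma}(4). Since $\bS_T$ is closed under finite direct sums (each of its defining conditions in Definition \ref{def-C} — being a root reflexive sheaf, and remaining such after any flat base change — is preserved by finite direct sums, using that $\boxtimes_TT^\prime$ commutes with direct sums, which follows from the explicit description of $\boxtimes$ as an image of a functor built from $\otimes$ and $(\cdot)^+$, both additive), we conclude that $\vartheta_s\SM\in\bS_T$.

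The one point requiring a little care is establishing the isomorphism $\vartheta_s\SM\cong\SM\oplus\gamma_s^{[\ast]}\SM$ \emph{as an object of $\bP_T$} and not merely as a collection of stalkwise isomorphisms: this is exactly where one invokes the rigidity Proposition \ref{prop-rig}, after verifying that the identifications in Lemma \ref{lemma-decgen} are compatible with restriction homomorphisms along the $T$-admissible family of $s$-invariant $T$-open subsets (Lemma \ref{lemma-rigsinv}). The compatibility is built into the proof of Lemma \ref{lemma-decgen}, so no new work is needed there. I expect the main (minor) obstacle to be the bookkeeping that $\gamma_s^{[\ast]}\SM$ is supported on $\Lambda s$ rather than $\Lambda$, so that the decomposition is genuinely a decomposition into two pieces living on disjoint unions of components; but since both $\Lambda$ and $\Lambda s$ are unions of connected components of $\CA_T$ (by Lemma \ref{lemma-comp}) and $\vartheta_s\SM$ is in $\bP_T$, its canonical decomposition along $C(\CA_T)$ automatically splits it accordingly, and the argument goes through cleanly. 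Thus $\vartheta_s\SM$ is an object in $\bS_T$.
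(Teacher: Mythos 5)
Your proposal is correct and follows essentially the same route as the paper: both reduce to the canonical decomposition of $\vartheta_s\SM$ along $\Lambda$ and $\Lambda s$ via Lemma \ref{lemma-decgen}, identify the two pieces as $\SM$ and $\gamma_s^{[\ast]}\SM$, and close with Lemma \ref{lemma-gamma}(4). The paper compresses this into two sentences (implicitly using that membership in $\bS$ is checked componentwise), whereas you spell out the direct-sum decomposition $\vartheta_s\SM\cong\SM\oplus\gamma_s^{[\ast]}\SM$ and the closure of $\bS$ under finite direct sums, but the content is the same.
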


\begin{proof}  
Note that $\SN$ is an object in $\bS$ if and only if $\SN^{\ol\Lambda}$ and $\SN^{\ol{\Lambda s}}$ are objects in $\bS$. The statement then follows from Lemma \ref{lemma-decgen} and Lemma  \ref{lemma-gamma}. 
\end{proof} 

\subsection{Wall crossing functors and $s$-invariant open sets} 
It is not difficult to control the local sections of images of the wall crossing functors over $s$-invariant open sets. 
\begin{lemma}\label{lemma-wcrf} Let $\SM$ be an object in $\bP$. \begin{enumerate}
\item Suppose $\SM$ is root reflexive, and let  $\CJ$ be an $s$-invariant open subset of $\CA_T$. Then $\vartheta_s\SM(\CJ)$ is root reflexive.
\item Suppose that $\SM$ is a sheaf on $\CA_T$. Then $\vartheta_s\SM$ satisfies the sheaf property with respect to families of $s$-invariant open subsets in $\CA_T$.
\end{enumerate}
\end{lemma}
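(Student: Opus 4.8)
The plan is to prove both statements by reducing to the behaviour of $\epsilon_s$ on $s$-invariant open sets, since by Theorem \ref{thm-main} the natural transformation $\rho_s^\SM(\CJ)\colon\epsilon_s\SM(\CJ)\to\vartheta_s\SM(\CJ)$ is an isomorphism for every $s$-invariant open $\CJ$. Thus for part (1) it suffices to show that $(\epsilon_s\SM)(\CJ)=\CZ\otimes_{\CZ^s}\SM(\CJ^\sharp)=\CZ\otimes_{\CZ^s}\SM(\CJ)$ (using $\CJ^\sharp=\CJ$) is root reflexive whenever $\SM(\CJ)$ is. First I would reduce to a single connected component $\Lambda$ of $\CA_T$. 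If $\Lambda\ne\Lambda s$, I would pass to the decomposition $\SM=\SM^{\ol\Lambda}\oplus\SM^{\ol{\Lambda s}}$ and invoke Lemma \ref{lemma-decgen}: on $s$-invariant $\CJ$ the local sections of $(\vartheta_s\SM)^{\ol\Lambda}$ are $\SM^{\ol\Lambda}(\CJ)\oplus\SM^{\ol{\Lambda s}}(\CJ)^{s-tw}$, and root reflexivity is preserved by $(\cdot)^{s-tw}$ (it only twists the $\CZ$-action by an algebra automorphism, which commutes with the relevant localizations $T^\alpha$, $T^\emptyset$) and by finite direct sums. If $\Lambda=\Lambda s$, then $\SM(\CJ)$ is $\CZ$-supported on an $s$-invariant set $\CL\subset\CV$, so by Lemma \ref{lemma-transsinv} we have $\CZ\otimes_{\CZ^s}\SM(\CJ)=\CZ^{\CL}\otimes_{\CZ^{\CL,s}}\SM(\CJ)$, and by Remark \ref{rem-trans} this is isomorphic, as a $T$-module (even after any flat base change $T\to T^\alpha$ or $T\to T^\emptyset$, since those are exact and commute with everything in sight), to $\SM(\CJ)\oplus\SM(\CJ)[-2]$. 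Hence its intersection $\bigcap_{\alpha\in R^+}(\cdot)\otimes_TT^\alpha$ inside $(\cdot)\otimes_TT^\emptyset$ is computed summand-wise and equals $\SM(\CJ)\oplus\SM(\CJ)[-2]$ because $\SM(\CJ)$ is root reflexive; tracing this identification back through $\rho_s$ gives root reflexivity of $\vartheta_s\SM(\CJ)$.

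For part (2) the plan is as follows. Let $\{\CJ_i\}_{i\in I}$ be a family of $s$-invariant open subsets with union $\CJ=\bigcup_i\CJ_i$ (also $s$-invariant) and intersections $\CJ_i\cap\CJ_j$ (also $s$-invariant, using stability of $s$-invariant sets under finite intersection). By Theorem \ref{thm-main}, on each of these $s$-invariant open sets $\vartheta_s\SM$ agrees with $\epsilon_s\SM$, so the sheaf condition for $\vartheta_s\SM$ relative to this family is exactly the exactness of
\[
0\to(\epsilon_s\SM)(\CJ)\to\prod_{i\in I}(\epsilon_s\SM)(\CJ_i)\to\prod_{i,j\in I}(\epsilon_s\SM)(\CJ_i\cap\CJ_j),
\]
i.e., of
\[
0\to\CZ\otimes_{\CZ^s}\SM(\CJ)\to\prod_{i\in I}\CZ\otimes_{\CZ^s}\SM(\CJ_i)\to\prod_{i,j\in I}\CZ\otimes_{\CZ^s}\SM(\CJ_i\cap\CJ_j),
\]
where I have already used $\CJ^\sharp=\CJ$, $\CJ_i^\sharp=\CJ_i$, $(\CJ_i\cap\CJ_j)^\sharp=\CJ_i\cap\CJ_j$. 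Now $\SM$ being a sheaf gives exactness of the analogous sequence with $\CZ\otimes_{\CZ^s}(\cdot)$ removed. By Remark \ref{rem-trans}, the functor $\CZ\otimes_{\CZ^s}(\cdot)$ is exact on $\CZ$-modules that are $\CZ$-supported on the $s$-invariant sets in play — and again I would split off the components with $\Lambda\ne\Lambda s$ via Lemma \ref{lemma-decgen} so that on each surviving piece everything is $\CZ$-supported on an $s$-invariant subset of $\CV$, so that Remark \ref{rem-trans} applies literally and $\CZ\otimes_{\CZ^s}(\cdot)$ is exact (even a direct sum with a shift). Since exact additive functors commute with kernels and with the products appearing here, applying $\CZ\otimes_{\CZ^s}(\cdot)$ to the exact equalizer sequence for $\SM$ yields the desired exact equalizer sequence for $\epsilon_s\SM$, hence for $\vartheta_s\SM$.

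The main obstacle I anticipate is the bookkeeping around the case $\Lambda=\Lambda s$: there $\CZ\otimes_{\CZ^s}(\cdot)$ is \emph{a priori} only right exact on all $\CZ$-modules, and one genuinely needs the $\CZ$-support hypothesis to upgrade to exactness via Lemma \ref{lemma-transsinv} and Remark \ref{rem-trans}. The point to be careful about is that $\SM(\CJ)$ for $\CJ$ $s$-invariant need not be $\CZ$-supported on a finite $s$-invariant set a priori, but since $\SM$ is finitary it is $\CZ$-supported on some finite $\CT\subset\CV$; one then replaces $\CT$ by $\CT\cup\CT s$, which is still finite and is $s$-invariant, and observes that $\SM(\CJ)$ is still $\CZ$-supported there (enlarging the support set is harmless). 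With this observation the hypotheses of Lemma \ref{lemma-transsinv} and Remark \ref{rem-trans} are met and both parts go through as sketched; everything else is a routine manipulation of exact functors, finite products, and the identification furnished by $\rho_s$.
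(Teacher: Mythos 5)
Your proposal is correct, and it arrives at the same key identification as the paper, namely $\vartheta_s\SM(\CJ)\cong\SM(\CJ)\oplus\SM(\CJ)[-2]$ as $\CZ^s$-modules (hence as $T$-modules) for $s$-invariant open $\CJ$, compatibly with restrictions; everything else in both proofs is a formality. The difference is how you get there. The paper's argument is strictly shorter: it invokes Lemma~\ref{lemma-lstrucfree} once, with $\CL=\CV$, to conclude $\CZ\cong\CZ^s\oplus\CZ^s[-2]$ as $\CZ^s$-modules, i.e.\ $\CZ$ is free of rank two over $\CZ^s$. Then for \emph{any} $\CZ^s$-module $M$ (no support hypothesis whatsoever), $\CZ\otimes_{\CZ^s}M\cong M\oplus M[-2]$; in particular $\CZ\otimes_{\CZ^s}(\cdot)$ is automatically exact and commutes with arbitrary products. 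Both parts of the lemma then fall out at once. You instead split into the cases $\Lambda=\Lambda s$ versus $\Lambda\ne\Lambda s$, invoking Lemma~\ref{lemma-decgen} in the second case and Lemma~\ref{lemma-transsinv} together with Remark~\ref{rem-trans} in the first, and you append a fix for the ``$\CZ$-support on a finite $s$-invariant set'' worry. All of this machinery is unnecessary: the support hypothesis in Lemma~\ref{lemma-transsinv} is needed only for the equality $\CZ\otimes_{\CZ^s}M=\CZ^\CL\otimes_{\CZ^{\CL,s}}M$, which you do not actually need once you have global freeness of $\CZ$ over $\CZ^s$. In short, your case analysis and support-bookkeeping can be deleted wholesale by reading Lemma~\ref{lemma-lstrucfree} at $\CL=\CV$; the rest of your argument then collapses to the paper's two-paragraph proof.
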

\begin{proof} Recall that $\CZ\cong\CZ^s\oplus \CZ^s[-2]$ as $\CZ^s$-modules by Lemma \ref{lemma-lstrucfree}. Fix such an isomorphism. 
Using Theorem \ref{thm-main},  for any $s$-invariant open subset $\CJ$  we  then have an isomorphism $\vartheta_s\SM(\CJ)=\SM(\CJ)\oplus \SM(\CJ)[-2]$ (of $\CZ^s$-modules). From this, part (1) follows. Note that the above isomorphism is  compatible with restrictions for inclusions $\CJ^\prime\subset\CJ$ of $s$-invariant open subsets. 
In order to prove claim (2) we need to show the following. Let $\CJ_i\subset\CA_T$ for $i\in I$ be $s$-invariant open subsets, and $m_i\in(\vartheta_s\SM){(\CJ_i)}$ sections that satisfy $m_i|_{\CJ_i\cap\CJ_j}=m_j|_{\CJ_i\cap\CJ_j}$ for all pairs $i,j\in I$. Then there exists a unique section $m\in (\vartheta_s\SM)(\bigcup_i \CJ_i)$ that satisfies $m|_{\CJ_i}=m_i$ for all $i\in I$.
By the above we have an identification $\vartheta_s\SM(\CJ_i)=\SM(\CJ_i) \oplus\SM(\CJ_i)[-2]$ of $\CZ^s$-modules that is compatible with the restriction homomorphisms. Hence we can write $m_i=m_i^\prime+m_i^{\prime\prime}$ with $m_i^\prime\in\SM{(\CJ_i)}$ and $m_i^{\prime\prime}\in\SM(\CJ_i)[-2]$ for all $i\in I$. As $\SM$ is a sheaf,  there are unique sections $m^\prime\in\SM({\bigcup_{i}\CJ_i})$ and $m^{\prime\prime}\in\SM(\bigcup_i\CJ_i)[-2]$ restricting to $m_i^\prime$ and $m_i^{\prime\prime}$ on $\CJ_i$, resp., for all $i\in I$. Hence $m:=m^\prime+m^{\prime\prime}$ is the unique extension of the $m_i$.
\end{proof}

\subsection{$s$-invariant sections}
If $\CJ$ is an $s$-invariant open subset, then the natural transformation $\rho_s\colon\epsilon_s\to\vartheta_s$ yields a natural identification $\vartheta_s\SM(\CJ)=\CZ\otimes_{\CZ^s}\SM(\CJ)$ by Theorem \ref{thm-main}, which is compatible with the restriction homomorphisms associated to an inclusion of $s$-invariant open subsets. So we can consider the endomorphism $\eta_s=\eta_s^{\CZ}\otimes\id$ on $\CZ\otimes_{\CZ^s}\SM({\CJ})$ constructed in Section \ref{subsec-sinv}. 

\begin{definition} For an $s$-invariant open subset $\CJ$ and an object  $\SM$ in $\bP$ we say that a section $m$ in $\vartheta_s\SM(\CJ)$ is {\em $s$-invariant} if $\eta_s(m)=m$, i.e. if $m$ is contained in $\CZ^s\otimes_{\CZ^s}\SM(\CJ)\subset\CZ\otimes_{\CZ^s}\SM(\CJ)$. 
\end{definition}

Let $\Lambda\subset \CA_T$ be a union of connected components. To simplify the notation, we write
$$
\CJ_\Lambda:=\CJ\cap\Lambda
$$
for an open subset $\CJ$ of $\CA_T$. 
Here is an easy characterization of $s$-invariant sections on $\CJ\subset \Lambda\cup\Lambda s$ in the case $\Lambda\cap\Lambda s=\emptyset$.  
 
\begin{lemma}\label{lemma-sinvgamma} Let $\Lambda$ be a union of components of $\CA_T$ with $\Lambda\cap\Lambda s=\emptyset$. For any open subset $\CJ$ of $\CA_T$ there is an isomorphism 
$$
\eta_s^{\ol\Lambda}\colon(\vartheta_s\SM)^{\ol\Lambda}(\CJ_\Lambda)\xrightarrow{\sim}(\vartheta_s\SM)^{\ol{\Lambda s}}(\CJ_\Lambda s)
$$ 
of $\CZ^s$-modules that is functorial in $\SM$ and compatible with restriction homomorphisms. Moreover, if $\CJ$ is $s$-invariant, then $\CJ_{\Lambda s}=\CJ_\Lambda s$ and an element $m=(m_{\ol\Lambda},m_{{\ol{\Lambda s}}})\in(\vartheta_s\SM)^{\ol\Lambda}(\CJ_\Lambda)\oplus(\vartheta_s\SM)^{\ol{\Lambda s}}(\CJ_{\Lambda s})$ is $s$-invariant if and only if $m_{{\ol{\Lambda s}}}=\eta^{\ol\Lambda}_s(m_{\ol\Lambda})$. 
\end{lemma}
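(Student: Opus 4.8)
The plan is to construct $\eta_s^{\ol\Lambda}$ first over the $T$-admissible family of $s$-invariant $T$-open subsets, where $\vartheta_s\SM$ is controlled by the algebraic operator $\eta_s$ of Section~\ref{subsec-sinv}, and then to propagate it to arbitrary open subsets by rigidity (Proposition~\ref{prop-rig}). Set $\SN:=\gamma_s^{[\ast]}\bigl((\vartheta_s\SM)^{\ol{\Lambda s}}\bigr)$: the presheaf $(\vartheta_s\SM)^{\ol{\Lambda s}}$ is supported on $\Lambda s$ by Lemma~\ref{lemma-supp} (applied componentwise), so $\SN$ is defined, and it is an object of $\bP$ supported on $\Lambda$ by Lemma~\ref{lemma-gamma}(1); likewise $(\vartheta_s\SM)^{\ol\Lambda}$ is an object of $\bP$ supported on $\Lambda$, being a direct summand of $\vartheta_s\SM$. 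Since the family of $s$-invariant $T$-open subsets is $T$-admissible (Lemma~\ref{lemma-rigsinv}), it suffices to produce, functorially in $\SM$, a family of isomorphisms $(\vartheta_s\SM)^{\ol\Lambda}(\CJ)\xrightarrow{\sim}\SN(\CJ)$ for $\CJ$ ranging over $s$-invariant $T$-open sets that is compatible with the restriction homomorphisms; Proposition~\ref{prop-rig} then yields a unique morphism of presheaves $(\vartheta_s\SM)^{\ol\Lambda}\to\SN$ inducing them, and applying the same proposition to the family of inverses (the two composites being the identity on the admissible family, hence everywhere by the uniqueness clause) shows it is an isomorphism.

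Over an $s$-invariant $\CJ$ the natural transformation $\rho_s$ of Theorem~\ref{thm-main} identifies $\vartheta_s\SM(\CJ)=\CZ\otimes_{\CZ^s}\SM(\CJ)$ compatibly with restrictions, and taking the $\ol\Lambda$- resp. $\ol{\Lambda s}$-component of the canonical decomposition gives $(\vartheta_s\SM)^{\ol\Lambda}(\CJ)=\bigl(\CZ\otimes_{\CZ^s}\SM(\CJ)\bigr)^{\ol\Lambda}$, while, using $\CJ s=\CJ$ and the definition of $\gamma_s^{[\ast]}$, $\SN(\CJ)=\bigl((\CZ\otimes_{\CZ^s}\SM(\CJ))^{\ol{\Lambda s}}\bigr)^{s-tw}$. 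By Lemma~\ref{lemma-sinvel}(1) with $\CL=\ol\Lambda$ and $M=\SM(\CJ)$, the operator $\eta_s^{\SM(\CJ)}$ restricts to an isomorphism between these two modules; since $\eta_s=\eta_s^{\CZ}\otimes\id$ is $\CZ^s$-linear and intertwines the $\CZ^{\ol\Lambda}$- and $\CZ^{\ol{\Lambda s}}$-actions precisely through the twist recorded by $(\cdot)^{s-tw}$, this restriction is a $\CZ$-linear (in particular $\CZ^s$-linear) isomorphism $\eta^{(\CJ)}\colon(\vartheta_s\SM)^{\ol\Lambda}(\CJ)\to\SN(\CJ)$. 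Compatibility with restrictions along inclusions of $s$-invariant open sets is immediate because $\eta_s^{\CZ}\otimes\id$ commutes with $\id_{\CZ}\otimes r_{\CJ}^{\CJ^\prime}$, and functoriality in $\SM$ is equally clear, as $\eta_s^{M}$ is functorial in $M$. This gives the presheaf isomorphism $\eta_s^{\ol\Lambda}\colon(\vartheta_s\SM)^{\ol\Lambda}\xrightarrow{\sim}\SN$. Evaluating at an arbitrary open $\CJ$ and using that both presheaves are supported on $\Lambda$, so that $(\vartheta_s\SM)^{\ol\Lambda}(\CJ)=(\vartheta_s\SM)^{\ol\Lambda}(\CJ_\Lambda)$ and $\SN(\CJ)=(\vartheta_s\SM)^{\ol{\Lambda s}}(\CJ_\Lambda s)^{s-tw}$, one reads off the stated $\CZ^s$-isomorphism $(\vartheta_s\SM)^{\ol\Lambda}(\CJ_\Lambda)\xrightarrow{\sim}(\vartheta_s\SM)^{\ol{\Lambda s}}(\CJ_\Lambda s)$ (the twist being invisible at the level of $\CZ^s$-modules).

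For the last assertion, if $\CJ$ is $s$-invariant then $\CJ_{\Lambda s}=\CJ\cap\Lambda s=\CJ s\cap\Lambda s=(\CJ\cap\Lambda)s=\CJ_\Lambda s$. The hypothesis $\Lambda\cap\Lambda s=\emptyset$, together with the fact that $\Lambda$ and $\Lambda s$ are unions of $\DZ R$-stable components, gives $\ol\Lambda\cap\ol{\Lambda s}=\emptyset$, so Lemma~\ref{lemma-sinvel}(2) applies with $\CL=\ol\Lambda$ and $M=\SM(\CJ)$: an element $(m_{\ol\Lambda},m_{\ol{\Lambda s}})$ of $\bigl(\CZ\otimes_{\CZ^s}\SM(\CJ)\bigr)^{\ol\Lambda}\oplus\bigl(\CZ\otimes_{\CZ^s}\SM(\CJ)\bigr)^{\ol{\Lambda s}}$ is fixed by $\eta_s$ if and only if $m_{\ol{\Lambda s}}=\eta_s^{\SM(\CJ)}(m_{\ol\Lambda})$, and the right-hand side equals $\eta_s^{\ol\Lambda}(m_{\ol\Lambda})$ by the construction above. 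Since $\eta_s$ preserves the $\ol{\Lambda\cup\Lambda s}$-summand of $\CZ\otimes_{\CZ^s}\SM(\CJ)$ (that set being $s$-invariant), this is exactly the claimed criterion.

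I expect the only real friction to be bookkeeping: keeping the twist $(\cdot)^{s-tw}$, the passage between $\CZ$-, $\CZ^{\ol\Lambda}$- and $\CZ^s$-module structures, and the identification of $\gamma_s^{[\ast]}\bigl((\vartheta_s\SM)^{\ol{\Lambda s}}\bigr)$ with ``the $\ol{\Lambda s}$-part of $\vartheta_s\SM$ read through $\gamma_s$'' mutually consistent; it is easy to get an index or a twist backwards, but nothing here is deep. The single structural input is the rigidity principle, which is what upgrades the transparent description of $\eta_s$ on $s$-invariant sets to a genuine morphism of presheaves on all of $\CA_T$.
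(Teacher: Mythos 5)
Your argument is correct, but it takes a more roundabout route than the paper's. Both proofs ultimately rest on the same ingredient, namely Lemma~\ref{lemma-sinvel} applied to $\CZ\otimes_{\CZ^s}\SM(\CJ)$ for an $s$-invariant open $\CJ$. The paper's proof gets there in one step: since both sides of the asserted isomorphism depend only on $\CJ_\Lambda$, one may assume $\CJ\subset\Lambda$; then $\CJ^\sharp=\CJ\,\dot\cup\,\CJ s$ is $s$-invariant, the support condition (via Lemma~\ref{lemma-supp}) gives $(\vartheta_s\SM)^{\ol\Lambda}(\CJ)=(\vartheta_s\SM)^{\ol\Lambda}(\CJ^\sharp)$ and $(\vartheta_s\SM)^{\ol{\Lambda s}}(\CJ s)=(\vartheta_s\SM)^{\ol{\Lambda s}}(\CJ^\sharp)$, and on $\CJ^\sharp$ one has the identification $\vartheta_s\SM(\CJ^\sharp)=\CZ\otimes_{\CZ^s}\SM(\CJ^\sharp)$, so Lemma~\ref{lemma-sinvel} produces the map directly. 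You instead package the target as the presheaf $\SN=\gamma_s^{[\ast]}\bigl((\vartheta_s\SM)^{\ol{\Lambda s}}\bigr)$, construct the isomorphism only over the $T$-admissible family of $s$-invariant open sets, and then invoke rigidity (Proposition~\ref{prop-rig}) to upgrade it to a morphism of presheaves, finally evaluating at an arbitrary $\CJ$. This is sound, and it has the mild advantage of exhibiting $\eta_s^{\ol\Lambda}$ explicitly as an isomorphism of objects in $\bP$, $(\vartheta_s\SM)^{\ol\Lambda}\xrightarrow{\sim}\gamma_s^{[\ast]}\bigl((\vartheta_s\SM)^{\ol{\Lambda s}}\bigr)$, which meshes nicely with Lemma~\ref{lemma-decgen}; but the rigidity machinery is not needed here, since the support condition already lets one transport the question to a single $s$-invariant open set, section by section. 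The treatment of the final ``moreover'' assertion via Lemma~\ref{lemma-sinvel}(2) agrees with the paper.
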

\begin{proof} Suppose that $\CJ\subset\Lambda\cup\Lambda s$. Then $\CJ^{\sharp}=\CJ\dot\cup\CJ s$ and, by the support condition,  $\vartheta_s\SM^{\ol\Lambda}(\CJ)=\vartheta_s\SM^{\ol\Lambda}(\CJ^{\sharp})$.  As $\vartheta_s\SM(\CJ^{\sharp})=\CZ\otimes_{\CZ^s}\SM(\CJ^{\sharp})$ we can define $\eta_s^{\ol\Lambda}$ as in Lemma \ref{lemma-sinvel}. If $\CJ$ is $s$-invariant, then clearly $\CJ_{\Lambda s}=\CJ_\Lambda s$  and the remaining claim follows from Lemma \ref{lemma-sinvel} as well, after identifying $\vartheta_s\SM(\CJ)$ with $(\vartheta_s\SM)^{\ol{\Lambda}}(\CJ_\Lambda)\oplus(\vartheta_s\SM)^{\ol{\Lambda s}}(\CJ_{\Lambda s})$.   \end{proof}

The next result is the main technical result of this article and the most important ingredient in the proof that the wall crossing functors preserve the category $\bS$.

\begin{proposition} \label{prop-sinvpreim} Let $\SM$ be an object in $\bS$, let $\CJ\subset\CA_T$ be open and let $m\in\vartheta_s\SM(\CJ)$ be a section such that $m^\flat:=m|_{\CJ^\flat}$ is $s$-invariant. Then there exists a unique $s$-invariant section 
$m^\sharp\in\vartheta_s\SM(\CJ^\sharp)$ with $m^\sharp|_\CJ=m$.
\end{proposition}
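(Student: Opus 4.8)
The plan is to reduce the statement to the two geometric regimes identified in the section on alcove wall reflections: components $\Lambda$ with $\Lambda = \Lambda s$ and components with $\Lambda \cap \Lambda s = \emptyset$. Since $\SM$ (hence $\vartheta_s\SM$) decomposes canonically as $\bigoplus_{\Lambda\in C(\CA_T)}(\vartheta_s\SM)^{\ol\Lambda}$ with each summand supported on $\Lambda$, and since $\CJ^\sharp$, $\CJ^\flat$ are $T$-open by Lemma~\ref{lemma-Js}, it suffices to treat one connected component at a time. Group the components into $s$-orbits; a single $s$-orbit is either a fixed component $\Lambda = \Lambda s$, or a pair $\Lambda \sqcup \Lambda s$ with $\Lambda \cap \Lambda s = \emptyset$.

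First I would dispose of the pair case $\Lambda \cap \Lambda s = \emptyset$. Here $\CJ^\flat \cap (\Lambda\cup\Lambda s)$ and $\CJ^\sharp\cap(\Lambda\cup\Lambda s)$ are built from $\CJ_\Lambda$ and $\CJ_{\Lambda s}$ via the homeomorphism $\gamma_s$, and by Lemma~\ref{lemma-sinvgamma} an $s$-invariant section on an $s$-invariant open set is determined by its $\ol\Lambda$-component through the isomorphism $\eta_s^{\ol\Lambda}$. Concretely: $m$ restricted to this orbit is a pair $(m_{\ol\Lambda}, m_{\ol{\Lambda s}})$ of sections of $(\vartheta_s\SM)^{\ol\Lambda}(\CJ_\Lambda)$ and $(\vartheta_s\SM)^{\ol{\Lambda s}}(\CJ_{\Lambda s})$; the hypothesis that $m^\flat$ is $s$-invariant forces $m_{\ol{\Lambda s}}$ and $\eta_s^{\ol\Lambda}(m_{\ol\Lambda})$ to agree on $\CJ^\flat\cap\Lambda s$. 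Using that $(\vartheta_s\SM)^{\ol{\Lambda s}}$ is a sheaf (Lemma~\ref{lemma-gamma}(2), since $\SM\in\bS$ is a sheaf and $\gamma_s^{[\ast]}$ preserves sheaves, combined with Lemma~\ref{lemma-decgen}), and that $\CJ_{\Lambda s}\cup\eta$-image structure covers $\CJ^\sharp\cap\Lambda s$, one glues to get the desired $m_{\ol{\Lambda s}}^\sharp := \eta_s^{\ol\Lambda}(m_{\ol\Lambda})$ extending appropriately; uniqueness is immediate from flabbiness plus the rigidity of $s$-invariant sections. I expect this case to be essentially formal.

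The heart of the matter is the fixed case $\Lambda = \Lambda s$. Here I would work with $(\vartheta_s\SM)^{\ol\Lambda}$ and exploit the decomposition $\vartheta_s\SM(\CK) = \SM(\CK)\oplus\SM(\CK)[-2]$ for $s$-invariant open $\CK$ (from $\CZ\cong\CZ^s\oplus\CZ^s[-2]$, Lemma~\ref{lemma-lstrucfree}): $s$-invariant sections over $\CK$ are exactly those lying in the $\CZ^s\otimes_{\CZ^s}\SM(\CK)$ summand. The strategy: restrict $m$ to $\CJ^\flat$, where it is $s$-invariant, hence corresponds to an honest section $m^\flat \in \SM(\CJ^\flat)$; now I must extend this to $\SM(\CJ^\sharp)$ in a way whose image $m^\sharp\in\vartheta_s\SM(\CJ^\sharp)$ restricts to $m$ on $\CJ$ (not just on $\CJ^\flat$). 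The point where the hypotheses on $\SM$ genuinely enter is that $\SM\in\bS$ means $\SM$ and all its base changes are root reflexive sheaves — I would use base change to a subgeneric or generic base ring to check the gluing condition stalk-by-stalk on $\DZ R$-orbits, where Lemma~\ref{lemma-decm} (existence and uniqueness of an $s$-invariant element with prescribed $x$-component when the module is $\CZ$-supported on $\{x,xs\}$) produces the local extension. By Lemma~\ref{lemma-sinvset}(1), on the "new" alcoves $A\in\CJ^\sharp\setminus\CJ$ one has $As\preceq_T A$, so $As\in\CJ$; thus the value of $m$ at the mirror orbit is already available, and Lemma~\ref{lemma-decm} tells us the $s$-invariant lift is forced. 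Assembling these compatible local choices into a global section uses the sheaf property of $\vartheta_s\SM$ on $s$-invariant opens (Lemma~\ref{lemma-wcrf}(2)) together with the rigidity Proposition~\ref{prop-rig} applied to the $s$-invariant $T$-admissible family (Lemma~\ref{lemma-rigsinv}).

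The main obstacle I anticipate is precisely the interplay between "$m$ is $s$-invariant only on $\CJ^\flat$" and "we need $m^\sharp$ to restrict to $m$ on all of $\CJ$": on $\CJ\setminus\CJ^\flat$ the section $m$ need not be $s$-invariant, so its $\SM(\CJ)[-2]$-component is nonzero there, and we must verify this antiinvariant part extends consistently over $\CJ^\sharp$ with vanishing new contribution. Here Lemma~\ref{lemma-sinvset}(1) is the key geometric input — on $A\in\CJ\setminus\CJ^\flat$ we have $A\preceq_T As$, so $As\notin\CJ$, and the decomposition $\vartheta_s\SM=\SM\oplus\SM[-2]$ over the $s$-invariant hull lets the antiinvariant part be absorbed — but making this precise, and checking that the candidate $m^\sharp$ produced by gluing local (Lemma~\ref{lemma-decm}) data is well-defined and does restrict correctly, is where the real work lies. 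Uniqueness throughout follows from flabbiness and Lemma~\ref{lemma-sinvel}: an $s$-invariant section vanishing after restriction to a cofinal enough collection of stalks must vanish.
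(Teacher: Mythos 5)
Your proposal correctly identifies the case division ($\Lambda=\Lambda s$ vs.\ $\Lambda\cap\Lambda s=\emptyset$), the relevant lemmas (Lemma~\ref{lemma-decm}, Lemma~\ref{lemma-sinvgamma}, Lemma~\ref{lemma-sinvset}, Lemma~\ref{lemma-wcrf}, Lemma~\ref{lemma-subgensheaf}), and the need to pass to generic/subgeneric $T$ in order to make Lemma~\ref{lemma-decm} bite. The $\Lambda\cap\Lambda s=\emptyset$ case you outline is essentially the paper's. But there is a genuine gap in the $\Lambda=\Lambda s$ case, and a secondary organizational issue.

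The gap: in the subgeneric case with $\Lambda=\Lambda s$ the paper does \emph{not} glue local data at all. With $\ol\Lambda=\{x,xs\}$ and the orientation fixed by Lemma~\ref{lemma-sinvset} so that $\pi((\CJ^\sharp\setminus\CJ)\cap\Lambda)\subset\{xs\}$ and $\pi((\CJ\setminus\CJ^\flat)\cap\Lambda)\subset\{x\}$, one takes \emph{any} preimage $\tilde m^\sharp\in\vartheta_s\SM(\CJ^\sharp)$ of $m$ (flabbiness), then uses Lemma~\ref{lemma-decm} once, globally over $\CJ^\sharp$, to split $\tilde m^\sharp=m_1+m_2$ with $m_1$ $s$-invariant and $m_2$ $\CZ$-supported on $\{xs\}$. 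The hypothesis that $m^\flat$ is $s$-invariant plus the uniqueness in Lemma~\ref{lemma-decm} give $m_2|_{\CJ^\flat}=0$, so $m_2|_\CJ$ is in the kernel of the restriction to $\CJ^\flat$; the support condition (via Lemma~\ref{lemma-suppcond}) then forces $\supp_\CZ m_2|_\CJ\subset\{x\}$, which is incompatible with $\supp_\CZ m_2\subset\{xs\}$ unless $m_2|_\CJ=0$. Hence $m_1$ is the desired preimage. You anticipated that the antiinvariant component on $\CJ\setminus\CJ^\flat$ is the crux and that Lemma~\ref{lemma-sinvset}(1) governs it, but the mechanism — split one arbitrary preimage, then annihilate the bad summand by exhibiting two incompatible $\CZ$-supports — is the step your plan leaves open; your proposed ``glue local $s$-invariant choices'' route would in any case need to reconstruct exactly this argument, since there is only one orbit pair $\{x,xs\}$ in the subgeneric component and nothing to glue.

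Secondary issue: the paper performs the reduction to generic/subgeneric $T$ \emph{once, at the very start, for the entire statement}. Having proved the proposition over each $T^\alpha$ and $T^\emptyset$, the unique preimages $m^{\sharp,\ast}\in\vartheta_s\SM(\CJ^\sharp)\otimes_T T^\ast$ agree (by uniqueness over $T^\emptyset$) and hence define an element of $\bigcap_{\alpha}\vartheta_s\SM(\CJ^\sharp)\otimes_T T^\alpha$, which is $\vartheta_s\SM(\CJ^\sharp)$ by root reflexivity (Lemma~\ref{lemma-wcrf}(1)). Your plan invokes base change only inside the $\Lambda=\Lambda s$ analysis, ``stalk-by-stalk''; that requires a compatibility check between localizations that the reduce-first order sidesteps, and for general $T$ the module $\vartheta_s\SM(\CJ^\sharp)$ is not $\CZ$-supported on a single pair $\{x,xs\}$, so Lemma~\ref{lemma-decm} cannot be applied directly before localizing.
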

\begin{proof}
Suppose we have proven the statement of the proposition in the cases that $T$ is either generic or subgeneric. For general $T$ we can then view $m$ as an element in $\vartheta_s\SM(\CJ)\otimes_TT^{\ast}=\vartheta_s(\SM\boxtimes_TT^{\ast})(\CJ)$ for any $\ast\in R^+\cup\{\emptyset\}$ and obtain unique preimages $m^{\sharp,\ast}$ in $\vartheta_s\SM(\CJ^\sharp)\otimes_TT^{\ast}=\vartheta_s(\SM\boxtimes_TT^\ast)(\CJ^\sharp)$ with $m^{\sharp,\ast}|_\CJ=m$. The uniqueness statement implies that we have actually found an element in $\bigcap_{\alpha\in R^+}\vartheta_s\SM(\CJ^\sharp)\otimes_TT^{\alpha}$. By Lemma \ref{lemma-wcrf}, $\vartheta_s\SM(\CJ^\sharp)$ is root reflexive, so this is an element $m^\sharp\in\vartheta_s\SM(\CJ^\sharp)$ with $m^\sharp|_\CJ=m$. The uniqueness of $m^\sharp$ is implied by the uniqueness statement for $\ast=\emptyset$.

So we can now assume that $T$ is either generic or subgeneric.  Consider the canonical decomposition $\vartheta_s\SM=\bigoplus_{\Lambda\in C(\CA_{T})}(\vartheta_s\SM)^{\ol\Lambda}$. We can assume that $m$ is supported on $\Lambda\cup\Lambda s$ for some $\Lambda\in C(\CA_T)$. Moreover, by the support condition we can also assume that $\CJ$ is a subset of $\Lambda\cup\Lambda s$. We now distinguish the cases $\Lambda=\Lambda s$ and $\Lambda\ne\Lambda s$. 

First assume that $\Lambda=\Lambda s$. 
Then $T$ must be subgeneric and $\ol\Lambda=\{x,xs\}$  for some $\DZ R$-orbit $x$. 
  Let $\tilde m^\sharp\in\vartheta_s\SM(\CJ^\sharp)=\CZ\otimes_{\CZ^s}\SM(\CJ^\sharp)$ be an arbitrary preimage of $m$. Lemma \ref{lemma-decm} allows us to write $\tilde m^\sharp=m_1+m_2$  with an $s$-invariant element $m_1$ and an element $m_2$ that is $\CZ$-supported on $\{xs\}$. Now $m|_{\CJ^\flat}=m_1|_{\CJ^\flat}+m_2|_{\CJ^\flat}\in\vartheta_s\SM(\CJ^\flat)=\CZ\otimes_{\CZ^s}\SM(\CJ^\flat)$ is $s$-invariant,  and the uniqueness statement in Lemma \ref{lemma-decm} implies that $m_2|_{\CJ^\flat}=0$. In particular, $m_2|_{\CJ}$ is contained in the kernel of $\vartheta_s\SM(\CJ)\to\vartheta_s\SM(\CJ^\flat)$. As $\vartheta_s\SM$ satisfies the support condition, Lemma \ref{lemma-suppcond}  implies that $\supp_{\CZ}\,m_2|_{\CJ}\subset\{x\}$. But by construction $\supp_{\CZ}\,m_2|_{\CJ}\subset\{xs\}$. This is only possible if  $m_2|_\CJ=0$. It follows that $m_1|_\CJ=\tilde m^\sharp|_\CJ=m$, and so $m^\sharp:=m_1$ is  an $s$-invariant preimage of $m$.  The uniqueness of $m^\sharp$ again follows from Lemma \ref{lemma-decm}.

Now suppose that $\Lambda\ne\Lambda s$. Then we can write $m=m_{\ol\Lambda}+m_{{\ol{\Lambda s}}}$ with $m_{\ol\Lambda}\in(\vartheta_s\SM)^{\ol\Lambda}(\CJ)$ and $m_{{\ol{\Lambda s}}}\in(\vartheta_s\SM)^{\ol{\Lambda s}}(\CJ)$. We have $(\vartheta_s\SM)^{\ol\Lambda}(\CJ)\cong(\vartheta_s\SM)^{\ol\Lambda}(\CJ_{\Lambda})$ and $(\vartheta_s\SM)^{\ol{\Lambda s}}(\CJ)\cong(\vartheta_s\SM)^{\ol{\Lambda s}}(\CJ_{\Lambda s})$. Lemma \ref{lemma-sinvgamma} gives us isomorphisms $(\vartheta_s\SM)^{\ol\Lambda}(\CJ_\Lambda)\xrightarrow{\eta_s^{\ol\Lambda}}(\vartheta_s\SM)^{\ol{\Lambda s}}(\CJ_{\Lambda} s)$ and $(\vartheta_s\SM)^{\ol{\Lambda s}}(\CJ_{\Lambda s})\xrightarrow{\eta_s^{\ol{\Lambda s}}}(\vartheta_s\SM)^{\ol\Lambda}(\CJ_{\Lambda s}s)$. Now   $m_{\ol\Lambda}$ and $\eta_s^{\ol{\Lambda s}}(m_{{\ol{\Lambda s}}})$ are sections of $(\vartheta_s\SM)^{\ol\Lambda}$ over $\CJ_\Lambda$ and $\CJ_{\Lambda s}s$, resp. Note that $\CJ_\Lambda\cap \CJ_{\Lambda s}s=(\CJ^\flat)_\Lambda$ and $\CJ_{\Lambda}\cup\CJ_{\Lambda s}s=(\CJ^{\sharp})_\Lambda$. As $m_{\ol\Lambda}$ and $\eta_s^{\ol{\Lambda s}}(m_{{\ol{\Lambda s}}})$ agree on $\CJ^\flat$ (as $m|_{\CJ^\flat}$ is $s$-invariant), and as $(\vartheta_s\SM)^{\ol\Lambda}$ is a sheaf by Lemma \ref{lemma-subgensheaf}  the two sections glue and yield a section $m_{\ol\Lambda}^\sharp\in(\vartheta_s\SM)^{\ol\Lambda}((\CJ^\sharp)_\Lambda)=(\vartheta_s\SM)^{\ol\Lambda}(\CJ^\sharp)$. Similarly, with the roles of $\Lambda$ and $\Lambda s$ interchanged, we construct a section $m_{{\ol{\Lambda s}}}^\sharp$ in $(\vartheta_s\SM)^{\ol{\Lambda s}}(\CJ^\sharp)$ that restricts to $m_{{\ol{\Lambda s}}}$ and $\eta_s^{\ol\Lambda}(m_{\ol\Lambda})$ on $\CJ_{\Lambda s}$ and $\CJ_\Lambda s$, resp.  

Now 
\begin{align*}
\eta_s^{\ol\Lambda}(m_{\ol\Lambda}^\sharp)|_{\CJ_{\Lambda} s}=\eta_s^{\ol\Lambda}(m_{\ol\Lambda}^\sharp|_{\CJ_\Lambda})=\eta_s^{\ol\Lambda}(m_{\ol\Lambda})=m^\sharp_{\Lambda s}|_{\CJ_\Lambda s}.
\end{align*}
Analogously we obtain $\eta_s^{\ol{\Lambda s}}(m_{{\ol{\Lambda s}}}^\sharp)|_{\CJ_{\Lambda s}s}=m_{\ol\Lambda}^\sharp|_{\CJ_{\Lambda s}s}$, from which we deduce $\eta_s^{\ol\Lambda}(m_{\ol\Lambda}^\sharp)|_{\CJ_{\Lambda s}}=m^{\sharp}_{\Lambda s}|_{\CJ_{\Lambda s}}$. As $\vartheta_s\SM$ is a sheaf and since $\CJ_\Lambda s\cup\CJ_{\Lambda s}=(\CJ^\sharp)_{\Lambda s}$, we deduce $\eta_s^{\ol\Lambda}(m_{\ol\Lambda}^\sharp)=m_{{\ol{\Lambda s}}}^\sharp$. From Lemma \ref{lemma-sinvgamma} we deduce that $m^\sharp:=(m_{\ol\Lambda}^\sharp,m_{{\ol{\Lambda s}}}^\sharp)$ is an $s$-invariant section in $\vartheta_s\SM(\CJ^\sharp)$. By construction, $m^\sharp|_\CJ=m_{\ol\Lambda}+m_{{\ol{\Lambda s}}}=m$. 

In order to show uniqueness also in the case $\Lambda\ne\Lambda s$  it suffices to show that if $m^\sharp\in\vartheta_s\SM(\CJ^\sharp)$ is $s$-invariant and $m^\sharp|_\CJ=0$, then $m^\sharp=0$. But $m^\sharp|_{\CJ_\Lambda}=0$ implies $m^{\sharp}|_{\CJ_\Lambda\cup\CJ_{\Lambda} s}=0$ by $s$-invariance. Analogously $m^{\sharp}|_{\CJ_{\Lambda s}\cup\CJ_{\Lambda s} s}=0$. As $\CJ^\sharp=(\CJ_\Lambda\cup\CJ_{\Lambda} s)\cup(\CJ_{\Lambda s}\cup\CJ_{\Lambda s} s)$ and as both sets in parentheses are $s$-invariant, Lemma \ref{lemma-wcrf} implies that $m^\sharp=0$. \end{proof} 

\subsection{Wall crossing preserves the category $\bS$}
Now we state and prove one of the main results in this article. 
\begin{theorem} Suppose that $\SM$ is an object in $\bS$. Then $\vartheta_s\SM$ is an object in $\bS$ as well.
\end{theorem}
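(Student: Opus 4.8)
The presheaf $\vartheta_s\SM$ is an object of $\bP_T$ by the construction preceding Theorem \ref{thm-main}, so what remains is to check properties (1) and (2) of Definition \ref{def-C}. The plan is to first reduce everything to property (1). Let $T\to T^\prime$ be a flat homomorphism of base rings. Then $\SM\boxtimes_TT^\prime$ is again an object of $\bS_{T^\prime}$: it is a root reflexive sheaf on $\CA_{T^\prime}$ by property (2) for $\SM$, and for any further flat homomorphism $T^\prime\to T^{\prime\prime}$ of base rings we have $(\SM\boxtimes_TT^\prime)\boxtimes_{T^\prime}T^{\prime\prime}\cong\SM\boxtimes_TT^{\prime\prime}$ by Remark \ref{rem-propbox}(1), which is a root reflexive sheaf on $\CA_{T^{\prime\prime}}$, again by property (2) for $\SM$. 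Since $(\vartheta_s\SM)\boxtimes_TT^\prime\cong\vartheta_{T^\prime,s}(\SM\boxtimes_TT^\prime)$ by Lemma \ref{lemma-wcbc}, property (2) for $\vartheta_s\SM$ follows once we know that $\vartheta_s$ maps objects of $\bS$ to root reflexive sheaves, over every base ring. Hence it suffices to prove: for every base ring $T$ and every object $\SM$ of $\bS_T$, the presheaf $\vartheta_s\SM$ is a root reflexive sheaf on $\CA_T$.

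Next I would use the canonical decomposition $\vartheta_s\SM=\bigoplus_{\Lambda\in C(\CA_T)}(\vartheta_s\SM)^{\ol\Lambda}$, a finite sum since $\vartheta_s\SM$ is finitary, and the fact that both the sheaf property and root reflexivity may be checked on direct summands. For those $\Lambda$ with $\Lambda\cap\Lambda s=\emptyset$, Lemma \ref{lemma-decgen} identifies $(\vartheta_s\SM)^{\ol\Lambda}$ with $\SM^{\ol\Lambda}\oplus\gamma_s^{[\ast]}(\SM^{\ol{\Lambda s}})$, and this is an object of $\bS$ by Lemma \ref{lemma-subgensheaf}, in particular a root reflexive sheaf. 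For the remaining components one has $\Lambda=\Lambda s$, and using Lemma \ref{lemma-transsinv} one checks that then $(\vartheta_s\SM)^{\ol\Lambda}=\vartheta_s(\SM^{\ol\Lambda})$. Thus we are reduced to the case in which $\SM$ is supported on a single connected component $\Lambda$ with $\Lambda=\Lambda s$. Root reflexivity is then easy: for an $s$-invariant open subset $\CJ$ it is exactly Lemma \ref{lemma-wcrf}(1); for an arbitrary open $\CJ$ one uses that $\vartheta_s\SM$ is root torsion free, so $\vartheta_s\SM(\CJ)\hookrightarrow\bigoplus_{x\in\CV}\vartheta_s\SM(\CJ)^x$, together with the support condition $\vartheta_s\SM(\CJ)^x=\vartheta_s\SM((\CJ\cap x)_{\preceq_T})^x$, and then passes to the ($s$-invariant, open) saturations $((\CJ\cap x)_{\preceq_T})^\sharp$ and invokes Proposition \ref{prop-sinvpreim} to identify sections over $(\CJ\cap x)_{\preceq_T}$ with $s$-invariant sections over their $s$-saturations, reducing to the case already settled.

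The remaining point, and the heart of the matter, is the sheaf property for $\vartheta_s\SM$ when $\SM$ is supported on $\Lambda=\Lambda s$; this is where Proposition \ref{prop-sinvpreim} does the real work. Given open subsets $\{\CJ_i\}_{i\in I}$ of $\Lambda$ with $\CJ=\bigcup_i\CJ_i$ and a family $m_i\in\vartheta_s\SM(\CJ_i)$ compatible on the overlaps, I would pass to the $s$-invariant open sets $\CJ_i^\sharp$ (open by Lemma \ref{lemma-Js}), which cover $\CJ^\sharp$ by Lemma \ref{lemma-sinvset}(2). One first checks that each restriction $m_i|_{\CJ_i^\flat}$ is $s$-invariant — this is the main obstacle; it should follow because $\CJ_i^\flat\subseteq\CJ_i\cap\CJ_i s$ is covered by members of the original cover on which the compatibility of the family with its $s$-translated pieces, combined with the splitting $\CZ\cong\CZ^s\oplus\CZ^s[-2]$ of Lemma \ref{lemma-lstrucfree} over $s$-invariant sets, forces $s$-invariance. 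Granting this, Proposition \ref{prop-sinvpreim} produces unique $s$-invariant sections $m_i^\sharp\in\vartheta_s\SM(\CJ_i^\sharp)$ with $m_i^\sharp|_{\CJ_i}=m_i$; by uniqueness the family $\{m_i^\sharp\}$ is again compatible on overlaps, so by Lemma \ref{lemma-wcrf}(2) it glues to a section $m^\sharp\in\vartheta_s\SM(\CJ^\sharp)$, and $m:=m^\sharp|_\CJ$ restricts to every $m_i$. Uniqueness of $m$ follows from flabbiness of $\vartheta_s\SM$ together with the uniqueness clause of Proposition \ref{prop-sinvpreim}.

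Combining the four steps, $\vartheta_s\SM$ is a root reflexive sheaf on $\CA_T$ for every object $\SM$ of $\bS_T$ and every base ring $T$, and by the first step this is exactly what is needed for $\vartheta_s\SM$ to lie in $\bS_T$. I expect the only genuinely delicate point to be the verification that the local sections $m_i$ become $s$-invariant upon restriction to $\CJ_i^\flat$, so that Proposition \ref{prop-sinvpreim} is applicable; keeping track of how the gluing data propagate to the non-$s$-invariant principal open sets $\{\preceq_T A\}$ with $A\prec_T As$ and their $s$-saturations is where the bookkeeping will live.
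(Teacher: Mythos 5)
Your overall skeleton matches the paper's: reduce to showing $\vartheta_s\SM$ is a root reflexive sheaf over each base ring, split off the components with $\Lambda\cap\Lambda s=\emptyset$ via Lemma \ref{lemma-decgen} and Lemma \ref{lemma-subgensheaf}, and attack the case $\Lambda=\Lambda s$ using Proposition \ref{prop-sinvpreim} and Lemma \ref{lemma-wcrf}. But the step you yourself flag as ``the main obstacle'' is a genuine gap, and the argument you sketch for it does not work. There is no reason for the restrictions $m_i|_{\CJ_i^\flat}$ to be $s$-invariant: over the $s$-invariant open set $\CJ_i^\flat$ one has $\vartheta_s\SM(\CJ_i^\flat)=\CZ\otimes_{\CZ^s}\SM(\CJ_i^\flat)\cong\SM(\CJ_i^\flat)\oplus\SM(\CJ_i^\flat)[-2]$, so a general section has a nonzero antiinvariant part. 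Your appeal to ``compatibility of the family with its $s$-translated pieces'' has no content here, because the sets $\CJ_i s$ are not members of the cover and nothing relates $m_i$ on $\CJ_i$ to any section on $\CJ_i s$.

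The missing idea is a normalization trick, not a direct $s$-invariance statement. Since each $\CJ_i^\flat$ is $s$-invariant and $\bigcup_i\CJ_i^\flat=\CJ^\flat$ (Lemma \ref{lemma-sinvset}), the compatible family $\{m_i|_{\CJ_i^\flat}\}$ glues by Lemma \ref{lemma-wcrf}(2) to a section $m^\flat\in\vartheta_s\SM(\CJ^\flat)$. By flabbiness of $\vartheta_s\SM$ choose a lift $m'\in\vartheta_s\SM(\CJ)$ of $m^\flat$ and replace each $m_i$ by $m_i-m'|_{\CJ_i}$. After this replacement $m_i|_{\CJ_i^\flat}=0$, which is trivially $s$-invariant, so Proposition \ref{prop-sinvpreim} applies and produces the unique $s$-invariant extensions $m_i^\sharp$; from there your gluing and uniqueness argument is correct. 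The same normalization (subtracting a lift of $m|_{\CJ^\flat}$ so that the restriction to $\CJ^\flat$ vanishes) is also what the paper uses for root reflexivity on an arbitrary open $\CJ$, rather than the stalkwise reduction you propose; Proposition \ref{prop-sinvpreim} is a statement about sections over open sets with an $s$-invariance hypothesis on $\CJ^\flat$, and that hypothesis has to be manufactured, not assumed.
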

\begin{proof}  We can assume that $\SM$ is supported on the connected component $\Lambda$. The case $\Lambda\ne\Lambda s$ is already proven in Lemma  \ref{lemma-subgensheaf}. So now suppose that $\Lambda=\Lambda s$. Then $\vartheta_s\SM$ is supported on $\Lambda$ as well. First we show that  $\vartheta_s\SM$ is a  sheaf. 
Let  $\{\CJ_i\}_{i\in I}$ be a family of open subsets in $\Lambda$ and set $\CJ=\bigcup_{i\in I}\CJ_i$. Let  $m_i\in\vartheta_s\SM(\CJ_i)$ be sections with $m_i|_{\CJ_i\cap\CJ_j}=m_j|_{\CJ_i\cap\CJ_j}$ for all $i,j\in I$. Using Lemma \ref{lemma-wcrf} we see that the sections $m_i|_{\CJ_i^\flat}$ glue and yield a section $m^\flat\in\vartheta_s\SM(\CJ^\flat)$ (note that $\CJ^\flat=\bigcup_{i\in I}\CJ_i^\flat$ by Lemma \ref{lemma-sinvset}). As $\vartheta_s\SM$ is flabby, there is a preimage $m^\prime\in\vartheta_s\SM(\CJ)$ of $m^\flat$. Subtracting $m^\prime|_{\CJ_i}$ from $m_i$ shows that we can from now on assume that $m_i|_{\CJ_i^\flat}=0$ for all $i\in I$. In particular, each $m_i|_{\CJ_i^\flat}$ is $s$-invariant. For any $i\in I$ let $m_i^\sharp\in\vartheta_s\SM(\CJ_i^\sharp)$ be the unique $s$-invariant preimage of $m_i$ (cf. Proposition \ref{prop-sinvpreim}). We claim that  $m_i^\sharp|_{\CJ_i^\sharp\cap\CJ_j^\sharp}=m_j^\sharp|_{\CJ_i^\sharp\cap\CJ_j^\sharp}$ for all $i,j\in I$. Note that $\CJ_i^\sharp\cap\CJ_j^\sharp=(\CJ_i\cap\CJ_j)^\sharp$ by Lemma \ref{lemma-sinvset}, and $m_i^\sharp|_{\CJ_i\cap\CJ_j}=m_i|_{\CJ_i\cap\CJ_j}=m_j|_{\CJ_i\cap\CJ_j}=m_j^\sharp|_{\CJ_i\cap\CJ_j}$. From the uniqueness statement in Proposition \ref{prop-sinvpreim} we can now deduce  $m_i^\sharp|_{(\CJ_i\cap\CJ_j)^\sharp}=m_j^\sharp|_{(\CJ_i\cap\CJ_j)^\sharp}$. Using Lemma \ref{lemma-wcrf} again shows that the $m_i^\sharp$ glue and yield a section $m^\sharp\in\vartheta_s\SM(\CJ^\sharp)$. Then $m:=m^\sharp|_{\CJ}$ is the section we are looking for. 

Now we show that  $\vartheta_s\SM$ is root reflexive. 
We need to show that $\vartheta_s\SM(\CJ)$ is root reflexive for any open subset $\CJ$. Let $m$ be an element in $\bigcap_{\alpha\in R^+}\vartheta_s\SM(\CJ)\otimes_TT^{\alpha}$.  Then $m|_{\CJ^\flat}\in\vartheta_s\SM(\CJ^\flat)$ by Lemma \ref{lemma-wcrf}. By subtracting from $m$ a preimage of $m|_{\CJ^\flat}$ in $\vartheta_s\SM(\CJ)$ we can assume that $m|_{\CJ^\flat}=0$. Proposition  \ref{prop-sinvpreim} now shows that $m$ has a unique $s$-invariant preimage $m^{\ast\sharp}$ in $\vartheta_s\SM(\CJ^\sharp)\otimes_TT^{\ast}$ for all $\ast\in R^+\cup\{\emptyset\}$. By uniqueness, the elements $m^{\ast\sharp}$ agree as elements in $\vartheta_s\SM(\CJ^\sharp)\otimes_TT^{\emptyset}$, hence define an element in $\bigcap_{\alpha\in R^+}\vartheta_s\SM(\CJ^\sharp)\otimes_TT^{\alpha}$. Again by Lemma \ref{lemma-wcrf},   $\bigcap_{\alpha\in R^+}\vartheta_s\SM(\CJ^\sharp)\otimes_TT^{\alpha}=\vartheta_s\SM(\CJ^\sharp)$. So $m$ has a preimage $m^\sharp$ in $\vartheta_s\SM(\CJ^\sharp)$, so $m$ must be contained in $\vartheta_s\SM(\CJ)$. Hence $\vartheta_s\SM(\CJ)$ is root reflexive. 

To finish the proof, we need to show that $(\vartheta_s\SM)\boxtimes_TT^\prime$ is a root reflexive sheaf for any flat homomorphism $T\to T^\prime$ of base rings. As $(\vartheta_s\SM)\boxtimes_TT^\prime=\vartheta_{T^\prime,s}(\SM\boxtimes_TT^\prime)$ this follows from what we have already proven and the fact that $\SM\boxtimes_TT^\prime$ is a root reflexive sheaf (on $\CA_{T^\prime}$). 
\end{proof}

\end{document}